\title{Conjugacy growth series of some wreath products}
\author{Valentin Mercier}
\newcommand{\R}{\mathbb{R}}
\newcommand{\N}{\mathbb{N}}
\newcommand{\Z}{\mathbb{Z}}
\newcommand{\Co}{\mathbb{C}}
\newcommand{\Q}{\mathbb{Q}}
\newcommand{\T}{\mathscr{T}}
\newcommand{\Sph}{\textup{\textgoth{S}}}               %   Sphere in  a Cayley graph
\newcommand{\Gra}{\mathsf{Cay}}         %     Cayley graph
\newcommand{\spanned}{\textup{Span}}
\newcommand{\fix}{\textup{Fix}}
\newcommand{\supp}{\textup{supp}}
\newtheoremstyle{theorem}
{}%Space above
{}%Space below
{\itshape}%Body font
{}%Indent amount (empty = no indent, \parindent = para indent)
{\bfseries}%Thm head font
{}%Punctuation after thm head
{\newline}%Space after thm head: " " = normal interword space; \newline = linebreak
{}%Thm head spec (can be left empty, meaning `normal')
\newtheoremstyle{remark}% name
{}%Space above
{}%Space below
{\upshape}%Body font
{}%Indent amount (empty = no indent, \parindent = para indent)
{\bfseries}%Thm head font
{}%Punctuation after thm head
{\newline}%Space after thm head: " " = normal interword space; \newline = linebreak
{}%Thm head spec (can be left empty, meaning `normal')
\theoremstyle{theorem}
\newtheorem{theorem}{Theorem}[section]
\newtheorem{lemma}[theorem]{Lemma}
\newtheorem{corollary}[theorem]{Corollary}
\newtheorem{proposition}[theorem]{Proposition}
\theoremstyle{remark}
\newtheorem{example}{Example}[theorem]
\newtheorem{remark}[theorem]{Remark}
\newtheorem*{notation}{Notation}
\newtheorem*{definition}{Definition}
\newcommand\geol{\mathsf{Geo}}   % \Gamma                   geodesic language
\newcommand\geocl{\mathsf{ConjGeo}}   % \widetilde{\Gamma} geodesic conjugacy language
\newcommand\geolNorm{\mathsf{GeoNorm}}
\newcommand\conjNorm{\mathsf{ConjGeoNorm}}
\newcommand\conjrep{\mathsf{ConjRep}}
\newcommand{\sgs}{\sigma}              %                 standard growth series
\newcommand{\cgs}{\tilde{\sigma}}        %              conjugacy growth serie
\newcommand{\quotient}[2]{{\raisebox{.5em}{$#1$}\left/\raisebox{-.5em}{$#2$}\right.}}
\newcommand{\D}{\mathscr{D}}            %            Open disk in complex plane
\newcommand{\RC}{\mathcal{RC}}           %               Radius of convergence
\newcommand{\card}{\sharp}                       %  cardinal of a set
\newcommand{\Trees}{\Upsilon}     %finite trees containing the origin
\newcommand{\Treesbylefac}{\widetilde{\Upsilon}}       % set of a unique representative 
\newcommand{\Leafs}{\mathscr{L}}             %set of leafs of tree
\newcommand{\NLeafs}{\mathscr{L}^c}             % set of no leafes of a tree
\newcommand{\FuntreesG}{\Omega}
\begin{document}
\maketitle
\begin{abstract}
In this paper we consider groups of the form $G\wr L$, where the set of generators naturally extends the sets of generators of $G$ and $L$, and $L$ admits a Cayley graph that is a tree. We show how one can compute the conjugacy growth series of such groups in terms of the standard and conjugacy growth series of $G$. We then provide explicit formulas for groups of the form $G\wr \Z$ and $G\wr (C_2*C_2)$. 

We also prove that the radius of convergence of the conjugacy growth series of $G\wr L$, for any $G$ and $L$ as above, is the same as the radius of convergence of its standard growth series.

\bigskip

\noindent 2010 Mathematics Subject Classification: 20F65, 20E45, 20E22, 20F69, 05E18  % 68Q45.Geometric group theory, Conjugacy classes, ``Extensions, wreath products, and other compositions'', Asymptotic properties of groups, Group actions on combinatorial structures %Formal languages and automata 

\bigskip

\noindent Key words: Conjugacy growth series, wreath products, formal languages.

\end{abstract}
%\tableofcontents

\section{Introduction}
The conjugacy growth function of a finitely generated group $G$ records the number of conjugacy classes with a minimal length representative on the sphere of radius $n$ in the Cayley graph of $G$, for all $n\geq 0$. This was studied for free groups in \cite{R04}, \cite{C05} and \cite{R10}, for hyperbolic groups in \cite{CK02} and \cite{CK04}, for solvable groups in \cite{BdC10}, for linear groups in \cite{BCLM13}, for acylindrically hyperbolic groups in \cite{HO13} and \cite{AC17}, for certain branch groups in \cite{F14}, and several other classes of groups in \cite{GS_10}. Historically, one of the initial motivations for counting conjugacy classes of a given length came from counting closed geodesics of bounded length in compact Riemannian manifolds (see \cite{M69} for example).

In this paper we investigate conjugacy growth from both a formal and an asymptotic point of view in wreath products of the form $G\wr L$, where $L$ is a group which admits a Cayley graph that is a tree. We consider a natural generating set of $G\wr L$ built out of the standard generating sets of $G$ and $L$ (as defined in (\ref{vecY})). The formal point of view refers to computing and understanding the conjugacy growth series of these groups, that is, the complex power series with the conjugacy growth values as coefficients. In the last few years conjugacy growth series have been computed for several classes of groups based on the description of sets consisting of minimal length representatives from all conjugacy classes, as in \cite{CHHR14},\cite{CH14}, or \cite{AC17}. The computations in this paper also rely on describing the set of minimal length conjugacy representatives.

The main results of this paper are Propositions \ref{Lem_cgs_A}, \ref{Lem_cgs_Bneqe} and \ref{Lem_cgs_Be}, which together give the conjugacy growth series of a group $G\wr L$, for all groups where $L$ has a tree as its Cayley graph. These propositions lead to some explicit conjugacy growth series formulas for the groups $G\wr \Z$ and $G\wr (C_2*C_2)$ in Section \ref{Sec_examples}. We also give an asymptotic estimate of the conjugacy growth of the Lamplighter group $C_2\wr \Z$, that shows that its conjugacy growth series is transcendental over $\Q(z)$, in Proposition \ref{tran_L2}.

The asymptotic point of view refers typically to understanding the magnitude or type of the conjugacy growth function of a group, that is, whether it is exponential, intermediate or polynomial, and how it compares to the standard growth function. In \cite{GS_10} Guba and Sapir conjectured that for every amenable group of exponential growth, the conjugacy growth function is exponential, and that an amenable group with polynomial conjugacy growth function is virtually nilpotent. This conjecture can in fact be stated and is true for many classes of groups, with the notable exception of the `monster' groups of Osin \cite{O10} and Ivanov \cite[Theorem 41.2]{O91} that have a finite number of conjugacy classes but exponential standard growth, or the group in \cite{HO13} of exponential growth with two conjugacy classes. 

Another approach to measuring the asymptotic behavior of conjugacy growth is to consider the conjugacy growth rate, that is, the inverse of the radius of convergence of the conjugacy growth series. Several results show that the Guba-Sapir conjecture can be strengthened and holds for classes of groups other than the amenable ones: that is, not only is the conjugacy growth function exponential when the standard growth function is exponential, but the two functions have the same growth rate (see \cite{AC17} for the case of hyperbolic groups and \cite{CHM17} for graph products). The second main result of this paper gives an additional confirmation of this type of behavior in some finitely generated but infinitely presented groups: this is Corollary \ref{equality_rad_conv_wr} in Section \ref{Sec_cgs_A}, which shows that the radius of convergence of the conjugacy growth series of $G\wr L$ is the same as the radius of convergence of the standard growth series of $G\wr L$, for all groups where $L$ has a tree as its Cayley graph. 

%In this paper we express the conjugacy growth series of $G\wr L$ in terms of the standard and conjugacy growth series of $G$, and in Section \ref{Sec_examples} some explicit conjugacy growth series formulas for the groups $G\wr \Z$ and $G\wr (C_2*C_2)$. We prove in Section \ref{Sec_cgs_A} (Corollary \ref{equality_rad_conv_wr}) that the radius of convergence of the conjugacy growth series of $G\wr L$ is the same as the radius of convergence of the standard growth series of $G\wr L$, when $L$ has an (infinite) tree as its Cayley graph. \\

One of our main tools comes from the paper \cite{P_92}, where the author expresses the standard growth series of a group $G\wr L$, when $L$ admits a tree as its Cayley graph, in terms of the standard growth series of $G$. In order to compute the conjugacy growth in a group one needs to know when different elements are conjugate, and we use the criteria for solving the conjugacy problem developed in \cite{M_66}.

Section \ref{preliminaries} introduces the necessary prerequisites for growth series and conjugacy growth series in finitely generated groups and wreath products. In Section \ref{About_conj_lenght} we concentrate on the conjugacy growth of wreath products. We present the main differences between conjugacy classes having cursor position of infinite order, finite order, or the trivial element, where the \emph{cursor} refers to the $L$ coordinate of a group element. In Section \ref{Sec_compu_wr_tree} we describe the conjugacy growth series of the wreath products mentioned above, and in Section \ref{Sec_examples} we give explicit formulas for some examples. In the case where $L$ admits a Cayley graph which is an infinite tree and not a line, we were unable to obtain a general formula for the conjugacy growth series; the difficulty resides in counting orbits (under the left action of $L$) of finite sub-trees containing the origin in the Cayley graph of $L$, which is needed in Proposition \ref{Lem_cgs_Be}.

We point out that conjugacy growth series for some wreath products have been recently computed in \cite{HB16}, but the groups studied there are infinitely generated (unless they are finite), and different methods apply.

%%%%%%%%%%%%%%%%%%%%%%%%%%%%%%%%%%%%%%%%%%%%%%%%%%%%%%%%%%%%%%%%%%%%%%%%%%%%%%%%%%%%%%%%%%%%%%%%%%%%%%%%%%%%%%%%%%%%%%%%%%%%%%%%%%%%%%%%%%%%%%%%%%%%%%%%%%%%%%%%%%%%%%%%%%%%%%%%%%%%%%%%%%

\section{Preliminaries}\label{preliminaries}
If $X$ is a set, we write $\card X$ for its cardinal.

\subsection{Growth series and languages}\label{growt_series}
We first recall some basic facts about power series in complex analysis (see for example the reference \cite[Chapter III Section 1]{C78}).
We denote the open disc of radius $r>0$ centered at $c \in \Co$ by
$
  \D(c,r):=\{z\in\Co\,:\,|z-c|<r\},
 $
 and define a {\em complex power series} as a function $f:\D(0,r)\rightarrow \Co$ of the form
$
  f(z)=\sum_{j=0}^{\infty}a_{j}z^j,
 $
where $a_{j}\in\Co$ for all $j$. We express the fact that $a_j$ is the {\em coefficient} of $z^j$ by writing
\[
 [z^j]f(z):=a_j.
\]
The radius of convergence $\RC(f)$ of $f$ can be defined as $$
 \RC(f)=\sup\{r\in\R\,:\,f(z)\textup{ converges }\,\forall\,z\in\D(0,r)\},
$$ or equivalently as 
\[
 \RC(f)=\frac{1}{\limsup_{j\to\infty}\root j\of{|a_{j}|}}.
\]
If $\RC(f)>0$, then $f$ is defined at every point in the open disc $\D(0,\RC(f))$ and $f$ converges absolutely. In general, $f(z)$ is not defined for $z$ with $|z|= \RC(f)$.\\

\begin{proposition}\label{positive_preim_1}
 Let $f\neq 0$ be a complex power series such that $\RC(f)>0$, each coefficient $[z^j]f(z)$ is a natural number, and $[z^0]f(z)=0$. Then there exists a unique positive number $t>0$ such that $f(t)=1$ and
 \[
  t=\inf\{|z|\,:\,|f(z)|=1\}=\sup\{r>0\,:\,|f(z)|\leq 1,\,\forall z\in\D(0,r)\},
 \]
 and the infimum and supremum are attained.
\end{proposition}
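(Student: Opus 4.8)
The plan is to exploit the fact that $f$ has nonnegative integer coefficients and $f(0)=0$, which forces $f$ to be strictly increasing and continuous on the real interval $[0,\RC(f))$, and then to locate the desired $t$ as the unique real solution of $f(t)=1$, finally upgrading the real statement to the two extremal characterizations involving $|f(z)|$ on complex discs.

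First I would record the elementary analytic facts. Since $\RC(f)>0$, the series $f(z)=\sum_{j\ge 1}a_jz^j$ (note $a_0=0$) converges absolutely on $\D(0,\RC(f))$ and defines a continuous function there; restricted to real arguments $x\in[0,\RC(f))$, all terms $a_jx^j$ are $\ge 0$, so $f$ is nondecreasing, and because $f\ne 0$ at least one $a_j>0$ with $j\ge 1$, whence $f$ is in fact \emph{strictly} increasing on $[0,\RC(f))$ and $f(0)=0$. Next I would show $f$ attains values $\ge 1$ on this interval: either $\RC(f)=\infty$ and $f(x)\to\infty$, or $\RC(f)<\infty$ and one argues that $\lim_{x\to\RC(f)^-}f(x)\ge 1$ — indeed if the limit $S$ (possibly $+\infty$) were $<1$, then by monotone convergence $\sum a_j\RC(f)^j=S<\infty$, and a standard comparison (e.g. the root test applied to the sequence $a_j\RC(f)^j$, which is bounded) would give $\RC(f)>\RC(f)$, a contradiction; actually the cleaner route is: if $\sum_{j}a_j\RC(f)^j$ converges then for any $r$ slightly larger than $\RC(f)$... — in any case boundedness of $f$ on $[0,\RC(f))$ combined with integrality of coefficients is what must be ruled out, and the key point is that a power series with a finite radius of convergence and nonnegative coefficients cannot stay bounded near that radius unless all but finitely many coefficients vanish, which is impossible here since then $\RC(f)=\infty$. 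By the intermediate value theorem and strict monotonicity there is then a unique $t\in(0,\RC(f))$ with $f(t)=1$.

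Then I would deduce the extremal characterizations. For any complex $z$ with $|z|\le t$ we have $|f(z)|\le\sum_{j\ge 1}a_j|z|^j=f(|z|)\le f(t)=1$, using $a_j\ge 0$ and monotonicity; this shows $t\le\inf\{|z|:|f(z)|=1\}$ and also $t\le\sup\{r>0:|f(z)|\le 1\ \forall z\in\D(0,r)\}$ — wait, one must be careful with open versus closed discs, but since $f(x)<1$ strictly for $x<t$ and $|f(z)|\le f(|z|)<1$ on the open disc $\D(0,t)$, the supremum is $\ge t$. Conversely, the real point $z=t$ itself has $|f(t)|=1$, so $\inf\{|z|:|f(z)|=1\}\le t$; and for any $r>t$ the point $t$ lies in $\D(0,r)$ with $|f(t)|=1$, and in fact points slightly beyond $t$ on the real axis have $f(x)>1$, so the supremum is $\le t$. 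Combining the inequalities gives equality throughout, and the infimum is attained at $z=t$ while the supremum equals $t$ as required.

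**The main obstacle** is the boundedness argument in the finite-radius case: one needs to be sure that $f$ cannot remain bounded (let alone $<1$) as $x\to\RC(f)^-$. The honest justification is the observation that if $f$ had only finitely many nonzero coefficients it would be a polynomial with $\RC(f)=\infty$, contradicting $\RC(f)<\infty$; and if it has infinitely many nonzero coefficients, then since these are positive integers, $\limsup_j a_j^{1/j}\ge 1$ — no, that only gives $\RC(f)\le 1$. The correct statement is simply that $\sup_{x<\RC(f)}f(x)=\sum_{j}a_j\RC(f)^j$ by the monotone convergence theorem, and if this sum were finite then $\RC(f)$ could not be the radius of convergence unless... — the clean fact to invoke is Pringsheim's theorem, or more elementarily: a power series with nonnegative coefficients either has $\RC=\infty$, or its sum on $[0,\RC)$ is unbounded when infinitely many coefficients are nonzero and $\RC\le 1$... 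I would instead sidestep this entirely by noting that when $\RC(f)<\infty$, integrality of the nonzero coefficients forces $a_j\ge 1$ infinitely often, so $f(x)\ge\sum_{j\in J}x^j$ for an infinite set $J$, and this tail sum tends to $+\infty$ as $x\to 1^-$ if $\RC(f)\ge 1$, or one simply notes $f(x)\ge a_kx^k$ can be made large if... — the crispest version: if $\RC(f)\le 1$ then $a_j\ge 1$ for infinitely many $j$ gives $f(x)\to\infty$ as $x\to 1^-\le\RC(f)^-$ fails when $\RC(f)<1$; so the truly safe argument is that $\RC(f)$ finite and coefficients in $\N$ forces $\RC(f)\le 1$ is false (consider $\sum z^{2^j}$, $\RC=1$) — in fact nonnegative integer coefficients with infinitely many nonzero \emph{always} give $\RC(f)\le 1$, hence $\RC(f)<\infty\Rightarrow\RC(f)\le 1$, and then $f(x)\ge\sum_{j\in J}x^j\to+\infty$ as $x\to 1^-$, with $1\ge\RC(f)$ meaning we approach within the domain; if $\RC(f)<1$ strictly one uses instead that $f(x)\to\sum a_j\RC(f)^j$ which diverges since $a_j\ge 1$ infinitely often and $\sum\RC(f)^j<\infty$ would still... no: $\sum_{j\in J}\RC(f)^j$ converges when $\RC(f)<1$. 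So in that sub-case $f$ \emph{is} bounded on $[0,\RC(f))$ — but this cannot happen for a growth series normalization unless we additionally know $\RC(f)\le 1$, which for genuine growth series it is. I would therefore either add the hypothesis (satisfied in all applications) or simply prove it under the implicit assumption $\RC(f)\le 1$, and flag that the interesting and intended case is $\RC(f)\le 1$ where the sum genuinely diverges at the radius. This is the one delicate point; everything else is a routine application of monotonicity, the intermediate value theorem, and the triangle inequality $|f(z)|\le f(|z|)$.
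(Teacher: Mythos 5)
Your core argument — strict monotonicity and continuity of $f$ on $[0,\RC(f))$ to pin down $t$, then the triangle inequality $|f(z)|\leq f(|z|)\leq f(t)=1$ for both extremal characterizations — is exactly the paper's three-line proof. What you add is a (justified) worry about the \emph{existence} of $t$, and this is where all the extra material concentrates.

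The worry is warranted: as stated, the proposition is false. Take $a_n = \lceil 3^n/n^2 \rceil$ for $n\geq 10$ and $a_n=0$ for $n<10$. Then $a_n\in\N$, $a_0=0$, $f\neq 0$, $\RC(f)=1/3>0$, yet
\[
\sup_{0\leq x<1/3} f(x) \;=\; \sum_{n\geq 10} a_n 3^{-n} \;\leq\; \sum_{n\geq 10}\bigl(n^{-2} + 3^{-n}\bigr) \;<\; 1,
\]
so no $t\in(0,\RC(f))$ satisfies $f(t)=1$. The paper's proof shares the gap: it infers existence of $t$ from ``strictly increasing and continuous'' without ever checking that $f$ actually reaches $1$ below its radius. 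Where your attempted repair goes astray is in settling for the hypothesis $\RC(f)\leq 1$: when $\RC(f)<\infty$, infinitely many $a_n$ are $\geq 1$ (else $f$ is a polynomial and $\RC(f)=\infty$), which already forces $\RC(f)\leq 1$ automatically — the hypothesis adds nothing, and the counterexample above satisfies it. What \emph{does} close the gap is the sharper assumption $\RC(f)=1$ (so $\sum a_n$ diverges because infinitely many terms are $\geq 1$), or simply the explicit hypothesis $\lim_{x\to\RC(f)^-}f(x)\geq 1$. In the paper's sole application of Proposition \ref{positive_preim_1} — to $(2M+N-1)F_E(z)$ in the proof of Corollary \ref{equality_rad_conv_wr} — the existence of the fixed point $t$ is supplied separately by Parry's Theorem 4.1 in \cite{P_92}, so the conclusion used there is correct even though the proposition as written is not. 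Your instinct that this is ``the one delicate point'' was exactly right, and the paper is no more careful about it than you were.
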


\begin{proof}
Let $f=\sum_{n=1}^\infty a_n z^n$, where $a_n \in \N$ for all $n$.
On the interval $[0,\RC(f)[$ the function $f$ is strictly increasing and continuous so there exists a unique $t\in\R^{>0}$ such that $f(t)=1$. Now for any $|z|\leq t$ the following holds:
\[
 |f(z)|=|\sum_{n=1}^\infty a_n z^n|\leq \sum_{n=1}^\infty a_n |z^n|\leq \sum_{n=1}^\infty a_n t^n=f(t)=1.	
\]
\end{proof}

\begin{definition}
 Let $X$ be a set. We call the elements of $X$ {\em letters} and write $X^*$ for the set of words over $X$. A subset $L\subseteq X^*$ is called a {\em language}, and the empty word is denoted by $\epsilon$. 
\end{definition}

Let $L^n:=\underbrace{L\times\cdots\times L}_{n\textup{ times}}$ for any positive $n\in\N$, and for $l=(l_1,\ldots,l_n)\in L^n$ call the elements $l_i$, $i\in\{1,\ldots,n\}$, the {\em components} of $l$.
 Let $L$ be a language and $\simeq$ be an equivalence relation on $L$. For $l\in L$, we write $[l]_\simeq:=\{l'\in L\,:\,l'\simeq l\}$ for the {\em equivalence class} of $l$, and
 \[
  L_\simeq:=\{[l]_\simeq\,:\,l\in L\}
 \]
for the {\em quotient language of }$L$ {\em by} $\simeq$.

We now associate a complex power series to a language.
\begin{definition}
 For $w\in X^*$, the {\em length} of $w$ is the number of letters in $w$, we write it as $|w|$. If $l=(l_1,\ldots l_n)\in {X^*}^n$, the {\em length} of $l$ is defined to be $|(l_1 ,\ldots, l_n )| := \sum_{j=1}^n |l_j |$
\end{definition}

 Let $L$ be a language such that for every $m\in\N$ the set $\{l\in L\,:\,|l|=m\}$ is finite, (this for example is the case if the underlying alphabet is finite). Then we define the {\em growth series of} $L$, $F_L(z)$, to be the complex power series given by
 \[
  [z^m]F_L(z):=\card\{l\in L\,:\,|l|=m\}.
 \] 

The following is immediate.
\begin{proposition}
 Let $L$ be a language and $1\leq n\in \N$. Then
 \[
  F_{L^n}(z)=\big(F_L(z)\big)^n,\quad F_{\{u^n\,:\,u\in L\}}(z)=F_L(z^n).
 \]
\end{proposition}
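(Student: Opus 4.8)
The plan is to unwind both identities directly from the definition of the growth series of a language, comparing coefficients of $z^m$ on the two sides. For the first identity, I would fix $m\in\N$ and observe that
\[
  [z^m]F_{L^n}(z)=\card\{(l_1,\dots,l_n)\in L^n\,:\,|l_1|+\cdots+|l_n|=m\},
\]
using the convention for the length of a tuple given just above. Partitioning the tuples according to the composition $(m_1,\dots,m_n)$ of $m$ recording the individual lengths $|l_i|=m_i$, this count equals
\[
  \sum_{\substack{m_1+\cdots+m_n=m\\ m_i\ge 0}}\ \prod_{i=1}^n \card\{l\in L\,:\,|l|=m_i\}
  =\sum_{\substack{m_1+\cdots+m_n=m}}\ \prod_{i=1}^n [z^{m_i}]F_L(z),
\]
which is precisely $[z^m]\big(F_L(z)\big)^n$ by the definition of the product of power series (the Cauchy product, iterated $n-1$ times). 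Since the coefficients agree for every $m$, the two power series coincide. One should note in passing that the hypothesis that each length-$m$ slice of $L$ is finite guarantees the same for $L^n$, so $F_{L^n}$ is well defined.

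For the second identity, write $M:=\{u^n\,:\,u\in L\}$, where $u^n$ denotes the concatenation of $n$ copies of the word $u$. The key point is that the map $u\mapsto u^n$ is injective on $X^*$ (a word and its $n$-fold concatenation determine each other, since $u$ is the length-$|u^n|/n$ prefix of $u^n$), and that $|u^n|=n|u|$. Hence for each $m$,
\[
  [z^m]F_M(z)=\card\{u\in L\,:\,n|u|=m\},
\]
which is $\card\{u\in L\,:\,|u|=m/n\}=[z^{m/n}]F_L(z)$ when $n\mid m$, and $0$ otherwise; this is exactly $[z^m]F_L(z^n)$. Again the coefficients match for all $m$, giving $F_M(z)=F_L(z^n)$.

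Neither part presents a real obstacle: both are bookkeeping arguments, and the only thing to be careful about is matching conventions — the definition of length on tuples versus on concatenations, and the fact that the elements of $L^n$ are tuples whereas the elements of $\{u^n:u\in L\}$ are words. Since the statement is flagged in the paper as \emph{immediate}, I would keep the write-up to a couple of lines, essentially just the coefficient comparison above, and not belabor the Cauchy product formalism.
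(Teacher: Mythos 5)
Your proof is correct, and it fills in exactly what the paper leaves implicit: the paper states this proposition with only the remark that it is immediate and gives no proof at all. Your coefficient-by-coefficient comparison (Cauchy product for $L^n$, injectivity of $u\mapsto u^n$ together with $|u^n|=n|u|$ for the second identity) is the natural and essentially the only argument, so there is nothing to contrast.
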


\begin{notation}
 Let $L$ be a language and $\simeq$ an equivalence relation on $L$. For $[l]_\simeq\in L_{\simeq}$ we define the {\em length} of $[l]_\simeq$ to be
 \[
  |[l]_\simeq|:=\min\{|l'|\,:\,l'\in [l]_\simeq\}.
 \]
 If $L$ and $\simeq$ are such that for every $m\in\N$ the set $\{[l]_\simeq\in L\,:\,|[l]_\simeq|=m\}$ is finite, we define the {\em growth series of} $L_\simeq$, $F_{L_\simeq}(z)$, to be the complex power series given by
 \[
  [z^m]F_{L_\simeq}(z):=\card\{[l]_\simeq\in L\,:\,|[l]_\simeq|=m\}.
 \]
\end{notation}

For the practical and explicit computations of $F_{L_\simeq}(z)$, there are several approaches; one approach is to find exactly one representative $w$ in each equivalence class for the relation $\simeq$ on $L$ that satisfies $|w|=|[w]_\simeq|$, in such a way that we can easily compute the growth series of the language formed by these representative words.
Another approach is to see the equivalence classes for the relation  $\simeq$ as the orbits of a group acting on a language for which we know the growth series. In this last case we can apply Burnside's Lemma  to obtain information about the growth series $F_{L_\simeq}(z)$. %The next section gives an example of a quotient language and a computation of the associated growth series using Burnside's Lemma.
\begin{comment}
\begin{lemma}[Burnside, (see \cite{F1887} for the original proof)]
 Let $G$ be a finite group acting on a finite set $S$. The number of different orbits in $S$ under the action of $G$ is given by
 \[
  \card\,{\quotient{S}{G}}=\frac{1}{\card G}\sum_{g\in G}\card\fix(g),
 \]
where $\fix(g)$ denotes the set of elements of $S$ fixed by $g$.
\end{lemma}
\end{comment}
\subsection{Conjugacy growth series of a finitely generated group}

Let $H$ be a group with finite symmetric generating set $Z$.
For $w\in Z^*$ we write $\overline{w}$ for the corresponding element in $H$, so $\overline{\epsilon}=e$ for the trivial element of $H$. For $u,v\in Z^*$, we define $u\overline{=}v$ to hold if and only if $\overline{u}=\overline{v}$.
The {\em length} of $h\in H$ relative to $Z$ is $|h|_Z:=\min\{|w|\,:\,w\in Z^*,\,\overline{w}=h\}$, and 
for $i\in\N$, we write $$\Sph_Z(i):=\{h\in H\,:\,|h|_Z=i\}$$ for the {\em sphere of radius} $i$ in $H$.
The growth series
\[
\sgs_{(H,Z)}(z):=F_{Z^*_{\overline{=}}}(z),
\]
for which $[z^i]\sgs_{(H,Z)}(z)=\card\Sph_Z(i)$, is called the {\em standard growth series of }H (relative to $Z$).

Let us assume that no generator in $Z$ is trivial, and no two different generators represent the same element. We define the Cayley graph $\Gra(H,Z)$ of $H$, with respect to the generating set $Z$ to be the simple graph with vertex set $H$ where two vertices $h_1,h_2$ are connected by an edge if and only if there exists $q\in Z$ such that $h_1=h_2\overline{q}$.

 We write $\sim_H$ (or just $\sim$ if the context of $H$ is clear) for the equivalence relation on $H$ given by conjugation and we write $H_\sim$ for the set of conjugacy classes of $H$. For $h\in H$, we write $[h]\in H_\sim$ for the conjugacy class of $h$. Let $\overline{\sim}$ be the equivalence relation on $Z^*$ defined by $w\overline{\sim}v$ if and only if $\overline{w}\sim\overline{v}$.
  For $h\in H$ we write 
  \[
   |h|_{\sim,Z}:=\min\{|w|_Z\,:\,w\in Z^*,\,\overline{w}\in [h]\}
  \]
 for the {\em conjugacy length} of $h$ relative to $Z$, i.e. the minimum length of an element in the conjugacy class of $h$. Hence $|[h]|_{Z}:=|h|_{\sim,Z}$ is also well defined. Note that $|[h]|_{Z}=|[w]_{\overline{\sim}}|$ for the $[w]_{\overline{\sim}}\in Z^*_{\overline{\sim}}$ such that $[\overline{w}]_\sim=[h]_\sim$.  We write 
 \[
  \min([h])_Z:=\{h'\in [h]\,:\,|h'|_Z=|h|_{\sim,Z}\}
 \]
for the subset of $[h]$ consisting of elements of minimal length.

The growth series
\[
\cgs_{(H,Z)}(z):=F_{Z^*_{\overline{\sim}}}(z),
\]
for which $[z^i]\sgs_{(H,Z)}(z)=\card\{[h]_\sim\in H_\sim\,:\,|[h]|_{Z}=i\}$, is called the {\em conjugacy growth series of }H (relative to $Z$).

As mentioned in Section \ref{growt_series}, we would like to compute the growth series of the quotient languages $Z^*_{\overline{=}} $ and $Z^*_{\overline{\sim}}$ by considering the languages formed by exactly one minimal length representative word in $Z^*$ in each equivalence class. This leads us to consider the following languages. We use the notation from \cite{CHHR14}:

\begin{align*}
\geol(H,Z)&:=\{w \in Z^* \,:\, |w|=|\overline{w}|_Z\}\quad\textup{is {\em the geodesic language}},\\
\geocl(H,Z)&:=\{w \in Z^* \,:\, |w|=|\overline{w}|_{\sim,Z}\}\quad\textup{is {\em the conjugacy geodesic language}}.
\end{align*}

So $\geocl(H,Z)=\{w\in \geol(H,G)\,:\,\overline{w}\in\min([\overline{w}])_Z\}$. \\

We choose a {\em geodesic normal form}, that is, a subset $\geolNorm$ of $\geol(H,Z)$ that contains a geodesic representative word for every element of $H$. Then the growth series $\sgs_{(H,Z)}(z)$ is equal to the growth series $F_{\geolNorm}(z)$.

\begin{definition}
 Let $H$ be a group with generating set $Z$. A subset $\conjNorm\subseteq\geocl(H,Z)$ such that the map
 \[
  \xymatrix@R=.1cm{\conjNorm \ar[r]& H_\sim \\
	w\ar@{|->}[r] & [\overline{w}] }
 \]
 is a bijection
is called a set of {\em conjugacy normal forms}. The elements of the set $\{\overline{w}\,:\,w\in \conjNorm\}$ will be called the {\em conjugacy representatives}.
This means that for every conjugacy class we choose an element of smallest length in this class, and associate to it a geodesic word.
\end{definition}

Thus the growth series $\cgs_{(H,Z)}(z)$ is equal to the growth series $F_{\conjNorm}(z)$.\\

Whenever the conjugacy growth series of a group will be computed, we will explain which conjugacy normal form and conjugacy representatives we have chosen.

\begin{remark}
 Note that the notion of conjugacy growth series can be defined in any group $H$ generated by $Z$, provided that the set $\{[h]_\sim\in H_\sim\,:\,|[h]|_{\sim,Z}=i\}$ is finite, for every $i$, even if the set $Z$ is not finite. In the paper \cite{HB16} the authors considered some infinitely generated groups for which the latter sets are finite and computed the conjugacy growth series for those groups (although the standard growth series is not well-defined).
\end{remark}

\subsection{Wreath products}
We now fix a group $G$ with symmetric generating set $Y$ and neutral element $e$, and a group $L$ with symmetric generating set $X$ and neutral element $e'$.
\begin{notation}
 Let $I$ be a non-empty set. For $\eta\in \bigoplus_{i\in  I}G$ we write $\eta(i)$ for the $i^\textup{th}$ component of $\eta$, and if moreover $I$ is a group and $x\in I$, we define $\eta^x\in\bigoplus_{i\in  I}G$ by $\eta^x(i)=\eta(x^{-1}i)$, and say that $\eta^x$ is the {\em left translate} of $\eta$ by $x$.
 We write $\supp(\eta)=\{i\in I\,:\,\eta(i)\neq e\}$ for the {\em support} of $\eta$. 
\end{notation}

\begin{definition}
Let $G$ and $L$ be two groups. The {\em wreath product of }$G$ {\em by }$L$, written $G\wr L$, is defined as
\[
G\wr L:=\bigoplus_{i\in L} G \rtimes L,
\]
where for $(\eta,m),(\theta,n)\in G\wr L$, 
\[
 (\eta,m)(\theta,n)=(\eta\theta^m,mn).
\]
\end{definition}
For $(\eta,n)\in G\wr L$ we say that $n$ is the {\em cursor position}.
 For $g\in G$, let $\vec{g}\in \bigoplus_{i\in L}G$ be such that $\vec{g}(e')=g$ and $\vec{g}(i)=e$ for $i\neq e'$.
The neutral element of $G\wr L$ is then denoted by $(\vec{e},e')$, and for any $(\eta, m)\in G\wr L$, $(\eta, m)^{-1}=({(\eta^{m^{-1}})}^{-1},m^{-1})$.

The following set generates $G\wr L$:
\begin{equation}\label{vecY}
 \vec{Y}:=\{(\vec{e},x)\,:\,x\in X\}\cup\{(\vec{y},e')\,:\,y\in Y\}.
\end{equation}

\begin{remark}\label{meaning_of_tildeY}
One can interpret the generating set $\vec{Y}$ as follows. If $\Gra(L,X)$ denotes the Cayley graph of $L$ with generating set $X$, one may imagine that on each vertex $v$ of $\Gra(L,X)$ there is a copy of the group $G$.
Hence an element $(\eta,b)\in G\wr L$ can be viewed as 
\begin{enumerate}[$\bullet$]
 \item a finite subset $V$ of the set of vertices of $\Gra(L,X)$,
 \item for each $v\in V$, an element $g_v\in G\setminus\{e\}$ such that $g_v=\eta(v)$, and
 \item a vertex $v'$ of $\Gra(L,X)$ corresponding to the element $b$.
\end{enumerate}
With this interpretation, the elements of the set $\{(\vec{e},x)\,:\,x\in X\}$ induce moves of the cursor along the graph $\Gra(L,X)$, while the elements of $\{(\vec{y},0)\,:\,y\in Y\}$ can be seen as a generating set of the copy of $G$ at the vertex of $\Gra(L,X)$ corresponding to the current cursor position.
\end{remark}

\begin{proposition}[\cite{P_92}:Section 1 or \cite{M12}:Section 14.2] \label{length_in_wr_Mann} 
The length of an element $(\eta,b)$ (relative to $\vec{Y}$) is the sum of the following two values:
\begin{enumerate}
 \item The length of a minimal walk on $\Gra(L,X)$ that starts at the origin, visits every vertex $v$ for which $\eta(v)\neq e$ and ends at $b$,
 \item the sum of the lengths $|\eta(v)|_Y$ for $v\in L$.
\end{enumerate}
\end{proposition}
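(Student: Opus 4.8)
The plan is to prove the two inequalities separately, exploiting the fact that $\vec{Y}$ splits into the \emph{cursor generators} $(\vec{e},x)$, $x\in X$, and the \emph{lamp generators} $(\vec{y},e')$, $y\in Y$. First I would record how right multiplication by each type acts on an element $(\eta,m)$: by the multiplication rule $(\eta,m)(\vec{e},x)=(\eta,mx)$, so a cursor generator only moves the cursor along the edge of $\Gra(L,X)$ joining $m$ to $mx$; whereas $(\eta,m)(\vec{y},e')=(\eta',m)$, where $\eta'$ agrees with $\eta$ except that $\eta'(m)=\eta(m)y$, i.e.\ a lamp generator right‑multiplies by $y$ the copy of $G$ at the current cursor position and leaves everything else alone. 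Consequently, reading a word $w=z_1\cdots z_k\in\vec{Y}^*$ from left to right with the cursor starting at the origin $e'$, the cursor letters occurring in $w$ trace a walk $W$ in $\Gra(L,X)$ starting at $e'$, and an easy induction on $k$ shows that $\overline{w}=(\eta,b)$, where $b$ is the endpoint of $W$ and, for every vertex $v$, $\eta(v)=\overline{w_v}$ for the word $w_v\in Y^*$ obtained by reading in order the lamp letters of $w$ that occur while the cursor sits at $v$ (with $w_v=\epsilon$ if no such letter occurs).

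For the lower bound, take a geodesic word $w$ for $(\eta,b)$. The number of cursor letters of $w$ equals the length $|W|$ of the walk it traces, and the number of lamp letters equals $\sum_{v\in L}|w_v|$ with $w_v$ as above. Since $\overline{w_v}=\eta(v)$ we get $|w_v|\ge|\eta(v)|_Y$, and if $\eta(v)\neq e$ then $w_v\neq\epsilon$, which forces $W$ to visit $v$; hence $W$ is a walk from $e'$ to $b$ visiting every $v\in\supp(\eta)$, so $|W|$ is at least the length of a minimal such walk, and $|w|=|W|+\sum_v|w_v|$ is at least the claimed quantity. (Here one should note that $\supp(\eta)$ is finite and $\Gra(L,X)$ is connected, so walks of the required kind exist.) For the upper bound I would run this construction in reverse: choose a minimal walk $W$ from $e'$ to $b$ visiting all of $\supp(\eta)$ and write it as a sequence of cursor generators; for each $v\in\supp(\eta)$ pick a geodesic $u_v\in Y^*$ with $\overline{u_v}=\eta(v)$ and $|u_v|=|\eta(v)|_Y$; then inserting the lamp letters of $u_v$ into the cursor‑word at the first instant the cursor reaches $v$ produces a word representing $(\eta,b)$ of length exactly $|W|+\sum_{v}|\eta(v)|_Y$. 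Combining the two estimates gives the equality.

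The main obstacle is the bookkeeping behind the sentence $\eta(v)=\overline{w_v}$ in the lower bound: the lamp letters applied at a fixed vertex $v$ need not be consecutive in $w$, since the cursor may leave $v$ and return several times. One must therefore argue that it is nonetheless their ordered concatenation $w_v$ that evaluates to $\eta(v)$; this holds because a lamp letter applied while the cursor is at some $u\neq v$ does not touch the $v$‑coordinate, and the copy of $G$ at $v$ starts at $e$, so the $v$‑coordinate passes through precisely the partial products of $w_v$. Once this is set up carefully, both the identification of $|W|$ with the number of cursor letters and the existence of the insertion in the upper bound are routine, and the two bounds match.
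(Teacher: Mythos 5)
The paper does not prove this proposition; it is imported verbatim from Parry \cite{P_92} (Section 1) and Mann \cite{M12} (Section 14.2), and the text moves straight on to the discussion of minimal walks. Your argument is a correct, self-contained proof, and it is the same standard argument those references give: analyse the two types of generators (cursor moves versus lamp modifications), observe that the cursor letters of any word trace a walk from $e'$ and that the $v$-coordinate of the resulting element is the evaluation of the subword of lamp letters read while the cursor sits at $v$, then get the lower bound from this decomposition and the upper bound by explicitly splicing geodesic lamp words for each $\eta(v)$ into a minimal walk. The one point you rightly flag — that the lamp letters applied at a fixed $v$ need not be consecutive, yet their ordered concatenation still evaluates to $\eta(v)$ — is handled correctly by noting that lamp letters applied elsewhere leave the $v$-coordinate untouched; this is exactly what the induction on word length verifies. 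Nothing is missing.
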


 Thus computing element length in a wreath product involves finding the walks of minimal length in $\Gra(L,X)$, a very difficult computational problem related to the traveling salesman problem. For this reason Parry (\cite{P_92}) only considers the case where the Cayley graph of $L$ is a tree.
\subsubsection{Description of the conjugacy classes in a wreath product}\label{conj_class_wr}
We follow the approach of \cite{M_66}, which gives a sufficient and necessary condition for two elements in $G\wr L$ to be conjugate. In \cite{M_66} the wreath product is defined via right action, so since in this paper the wreath product is defined via left action, we modify the notation and statements in \cite{M_66} accordingly. 
\begin{notation}
(1) For $(\eta,b)\in G\wr L$, let
\[
 \pi_G((\eta,b)):=\eta(b).
\]
be the entry in $\eta$ at the cursor position.

\noindent (2) Let $b\in L$ be an element and let $<b>t$ be a right coset of the subgroup $\langle b\rangle$ generated by $b$.
\begin{comment}, that is,
 \[
  L=\bigsqcup_{t\in T}\langle b\rangle t.
 \]
 \end{comment}
We define the map $\pi_{\langle b\rangle t}:\bigoplus_{l\in L} G\rightarrow G\cup G_\sim$ by
\[
 \pi_{\langle b\rangle t}(\eta):=\left\{\begin{array}{ll}
                             \left[\prod_{k=0}^{K-1}\eta(b^{-k} t)\right] & \textup{ if }b\textup{ is of finite order }K\\
                             \prod_{k=-\infty}^\infty \eta(b^{-k} t)& \textup{ else}\rlap{\,.}
                            \end{array}\right.
\]
\end{notation}
Note that this is well defined since the support of $\eta$ is finite.\\
If $\pi_{\langle b\rangle t}(\eta)=[e]\in G_\sim$, we just write it as $\pi_{\langle b\rangle t}(\eta)=e$.
\begin{comment}
\begin{remark}
 If $T'$ is another right transversal of $\langle b\rangle $ and $t\in T$ and $t'\in T'$ are such that $\langle b\rangle t=\langle b\rangle t'$, then $\pi_{\langle b\rangle t}(\eta)$ and $\pi_{\langle b\rangle t'}(\eta)$ are conjugate in the case where $b$ is of finite order and are equal otherwise.
\end{remark}
\end{comment}
Now let us follow the approach of Jane Matthews in \cite{M_66} with our notation.
\begin{comment}
Assume that $(\eta,b)$ is conjugate to $(\eta',b')$ in $G\wr L$. This means that there is $(\theta,d)\in G\wr L$ such that 
\[
 (\eta',b')=(\theta,d)^{-1}(\eta,b)(\theta,d)=({(\theta^{d^{-1}})}^{-1}\eta^{d^{-1}} \theta^{d^{-1}b},d^{-1}bd).
\]
In other words
\[
 b'=d^{-1}bd\qquad\textup{and}\qquad \eta'^{d}=\theta^{-1}\eta\theta^{b}.
\]
Fixing a right transversal $T$ of $\langle b\rangle$, this is equivalent to saying that for every $k\in\Z$ and every $t\in T$
\begin{equation}
b'=d^{-1}bd\qquad\textup{and}\qquad \eta'^{d}(b^{-k}t)=\theta^{-1}(b^{-k}t)\eta(b^{-k}t)\theta(b^{-(k+1)}t)\label{equ_conj_transversal}. 
\end{equation}
Hence taking the product over all values of $k$ in the equality (\ref{equ_conj_transversal}) gives the following.

\begin{lemma}[\cite{M_66} Lemma 3.2]\label{to_be_conj_prod}
 In $G\wr L$, the elements $(\eta,b)$ and $(\eta',b')$ are conjugate if and only if there exists $(\theta,d)\in G\wr L$ such that, $b'=d^{-1}bd$ and for any right transversal $T$ of $\langle b\rangle$, for any $ t\in T$, for all $u\in\Z$ and all $v\in \N$, we have
\begin{equation}\label{eq:Mat_66}
  \prod_{k=u}^{u+v}\eta'^{d}(b^{-k}t)=\theta^{-1}(b^{-u}t)\left( \prod_{k=u}^{u+v} \eta(b^{-k}t)\right)\theta(b^{-(u+v+1)}t).
\end{equation}
\end{lemma}
\end{comment}
\begin{lemma}[\cite{M_66} Propositions 3.5 and 3.6] \label{to_conj_anycase} %\label{to_conj_anycase} 
 Let $(\eta,b)$ and $(\eta',b')$ be in $G\wr L$. Then $(\eta,b)$ and $(\eta',b')$ are conjugate if and only if there exists $d\in L$ such that $b'=d^{-1}bd$ and for any right coset $\langle b\rangle t$
\begin{equation}\label{eq:Mat_66_infinite}
  \pi_{\langle b\rangle t}(\eta)=\pi_{\langle b\rangle t}(\eta'^d).
 \end{equation}
\end{lemma}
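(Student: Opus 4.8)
The plan is to reduce the statement to the classical description of conjugacy in a semidirect product, and then to package the infinitely many constraints coming from the translation equation into a single product invariant along each orbit of $\langle b\rangle$ acting on $L$. First I would write $(\eta',b') = (\theta,d)^{-1}(\eta,b)(\theta,d)$ and expand using the multiplication rule $(\eta,m)(\theta,n)=(\eta\theta^m,mn)$ together with the inverse formula $(\theta,d)^{-1}=((\theta^{d^{-1}})^{-1},d^{-1})$. This yields the two conditions $b'=d^{-1}bd$ and $\eta'^{\,d}=\theta^{-1}\eta\,\theta^{b}$, the latter being an equality in $\bigoplus_{i\in L}G$. So conjugacy of $(\eta,b)$ and $(\eta',b')$ is equivalent to: there exist $d\in L$ with $b'=d^{-1}bd$ and $\theta\in\bigoplus_{i\in L}G$ solving $\eta'^{\,d}(i)=\theta(i)^{-1}\eta(i)\theta(b^{-1}i)$ for all $i\in L$.

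Next I would fix $d$ (realizing $b'=d^{-1}bd$) and analyze, for a fixed right coset $\langle b\rangle t$, the existence of values $\theta(b^{-k}t)$, $k$ ranging over $\Z$ (or $\Z/K\Z$ if $b$ has order $K$), satisfying the recurrence $\eta'^{\,d}(b^{-k}t)=\theta(b^{-k}t)^{-1}\eta(b^{-k}t)\theta(b^{-(k+1)}t)$. The point is that the cosets $\langle b\rangle t$ partition $L$ and the recurrence couples only the $\theta$-values lying in a single coset, so the problem decouples across cosets; moreover, since $\supp(\eta)$ and $\supp(\eta')$ are finite, only finitely many cosets are non-trivial and on each coset the recurrence is solvable in finitely many steps (for the infinite-order case one can freely choose $\theta$ on $b^{-k}t$ for $k\gg 0$ and then solve backwards, and symmetrically for $k\ll 0$). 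The obstruction to solvability is exactly a telescoping/holonomy condition: multiplying the recurrence over a full period (finite-order case) or over all of $\Z$ (infinite-order case) forces $\prod_k \eta'^{\,d}(b^{-k}t)$ to equal $\prod_k \eta(b^{-k}t)$, up to conjugation by $\theta(t)$ in the finite-order case and up to the telescoped $\theta$-terms vanishing in the infinite-order case. This is precisely the statement that $\pi_{\langle b\rangle t}(\eta)=\pi_{\langle b\rangle t}(\eta'^{\,d})$, where the bracket in the finite-order definition of $\pi_{\langle b\rangle t}$ accounts for the residual conjugation by $\theta(t)$.

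For the converse, assuming $\pi_{\langle b\rangle t}(\eta)=\pi_{\langle b\rangle t}(\eta'^{\,d})$ for every coset, I would construct $\theta$ coset by coset: in the infinite-order case, set $\theta$ to be trivial on $b^{-k}t$ for $k$ larger than the support, then define $\theta(b^{-(k-1)}t)$ recursively by $\theta(b^{-(k-1)}t):=\eta(b^{-k}t)^{-1}\theta(b^{-k}t)\,\eta'^{\,d}(b^{-k}t)$, descending through $k$; the hypothesis $\prod_{k=-\infty}^{\infty}\eta(b^{-k}t)=\prod_{k=-\infty}^{\infty}\eta'^{\,d}(b^{-k}t)$ guarantees that this definition stabilizes to the trivial element for $k$ below the support, so $\theta$ is indeed finitely supported and lies in $\bigoplus_{i\in L}G$. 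In the finite-order case one instead picks $g\in G$ realizing the conjugacy $\prod_k\eta'^{\,d}(b^{-k}t)=g^{-1}\big(\prod_k\eta(b^{-k}t)\big)g$, sets $\theta(t):=g$, and propagates around the cycle of length $K$ via the same recurrence; consistency around the full cycle is exactly the chosen conjugacy relation. Assembling these $\theta$'s over all cosets produces the desired conjugating element $(\theta,d)$.

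The main obstacle I anticipate is bookkeeping in the finite-order case: one must be careful that the recurrence propagated around a cycle of length $K$ closes up consistently, which requires the conjugacy (not mere equality) of the period products, and one must check that an arbitrary choice of the ``base'' value $\theta(t)$ realizing this conjugacy does not create conflicts — essentially that the stabilizer ambiguity is harmless. A secondary technical point is verifying that $\pi_{\langle b\rangle t}$ is independent (up to conjugacy, resp. equality) of the chosen coset representative $t$ within $\langle b\rangle t$, so that the condition (\ref{eq:Mat_66_infinite}) is well posed; this follows by re-indexing the product, shifting $k\mapsto k+1$, which conjugates the finite product by $\eta(t)$ and leaves the bi-infinite product unchanged after absorbing the shift. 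Everything else is a routine, if slightly lengthy, unwinding of the semidirect-product arithmetic.
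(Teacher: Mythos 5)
Your approach is the right one and, conceptually, it reproduces Matthews' argument: expand the semidirect-product conjugation to get $b'=d^{-1}bd$ and $\eta'^d=\theta^{-1}\eta\,\theta^b$, observe that the resulting recurrence $\eta'^d(b^{-k}t)=\theta(b^{-k}t)^{-1}\eta(b^{-k}t)\theta(b^{-(k+1)}t)$ decouples over right cosets of $\langle b\rangle$, telescope to extract the product invariant, and for the converse construct $\theta$ coset by coset. Note that the paper itself does not reprove this lemma --- it cites [Mat66, Props.\ 3.5 and 3.6] --- so there is no in-text proof to compare against; you are effectively supplying the argument the paper delegates to the reference, and you do so along the same lines as Matthews.

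One concrete slip in the converse, infinite-order case: the formula you write,
\[
\theta(b^{-(k-1)}t):=\eta(b^{-k}t)^{-1}\theta(b^{-k}t)\,\eta'^{d}(b^{-k}t),
\]
is the \emph{ascending} step (it is exactly what the recurrence gives for $\theta(b^{-(k+1)}t)$), yet you label the output $b^{-(k-1)}t$ and announce you are ``descending'' from $k$ large where $\theta$ vanishes. To descend you need to invert the relation, i.e.
\[
\theta(b^{-k}t)=\eta(b^{-k}t)\,\theta(b^{-(k+1)}t)\,\eta'^{d}(b^{-k}t)^{-1},
\]
so that the dependence runs from $b^{-(k+1)}t$ down to $b^{-k}t$. (Equivalently, keep your ascending formula but initialize $\theta$ to be trivial at the \emph{low} end of the support and let the product condition force triviality at the high end.) As written, the two halves of your construction --- the initialization and the step --- point in opposite directions, so a literal implementation would not terminate with a finitely supported $\theta$. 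This is a bookkeeping error rather than a gap in the strategy; once the direction of the recursion is made consistent, the rest of your sketch --- including the careful treatment of the finite-order case, where only conjugacy of the period products can be demanded because of the cycle-closure holonomy, and the remark on independence of the coset representative --- is sound and matches the intended proof.
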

\begin{comment}
\begin{proof}
Suppose first that $(\eta,b)$ and $(\eta',b')$ are conjugate. Since $\langle b\rangle$ is infinite and $\supp(\theta)$, $\supp(\eta)$ and $\supp(\eta')$ are finite, there exist $u\in\Z$ and $v\in\N$ such that $\theta^{-1}(b^{-u}t)=\theta(b^{-u-v-1}t)=e$ and $\pi_{\langle b\rangle t}(\eta)=\prod_{k=u}^{u+v} \eta(b^{-k}t)$ and $\pi_{\langle b\rangle t}(\eta'^d)=\prod_{k=u}^{u+v}\eta'^{d}(b^{-k}t)$. This gives, via (\ref{eq:Mat_66}), the equality (\ref{eq:Mat_66_infinite}).

For the other direction, assume there exists $d\in L$ such that $b'=d^{-1}bd$ and for any right coset
 \[
  \pi_{\langle b\rangle t}(\eta)=\pi_{\langle b\rangle t}(\eta'^d).
 \]
 Let $T$ be a right transversal of $\langle b\rangle$. Define $\theta\in\bigoplus_{i\in L}G$ by 
 \[
  \theta(b^{-k}t):=\prod_{l=k}^{\infty}\eta(b^{-l}t)\left(\prod_{l=k}^{\infty}\eta'^d (b^{-l}t)\right)^{-1},
 \]
for every $t\in T$ and every $k\in\Z$, and verify that (\ref{equ_conj_transversal}) is satisfied.\\
Note that the hypothesis $\pi_{\langle b\rangle t}(\eta)=\pi_{\langle b\rangle t}(\eta'^d) $ assures that $\supp(\theta)$ is finite.
\end{proof}
\end{comment}

\begin{example}\label{illustration}
 Here is an illustration of Lemma \ref{to_conj_anycase} in $G\wr\Z$. Let $\Z=\langle a|\rangle$. We represent an element $\xi\in\bigoplus_{i\in\Z}G$ as a bi-infinite line on which for all $i$ such that $\xi(a^i)\neq e$ there is a symbol $\underset{i}{g}$ meaning $\xi(a^i)=g$. Let $(\eta,a^3)\in G\wr\Z$ be such that $\eta$ is represented by the following line
\[
 \xymatrix@C=1.3cm@R=.3cm{\ar@{.}[r] & \underset{-2}{g_1}\ar@{-}[r] & \underset{-1}{g_2}\ar@{-}[r] & \underset{0}{g_3} \ar@{-}[r] & \underset{1}{g_4} \ar@{-}[r] & \underset{2}{g_5} \ar@{-}[r] & \underset{3}{g_6}\ar@{-}[r] &\underset{4}{g_7} \ar@{.}[r] & \rlap{\, .}
 }
\]
Taking $T=\{e',a,a^2\}$ as right transversal of $\langle a^3\rangle$, we find 
\[
 \pi_{\langle a^3\rangle e'}(\eta)=g_6g_3,\quad \pi_{\langle a^3\rangle a}(\eta)=g_7g_4g_1,\quad \pi_{\langle a^3\rangle a^2}(\eta)=g_5g_2.
\]
Lemma \ref{to_conj_anycase} shows that $(\eta,a^3)$ is conjugate in $G\wr \Z$ to $(\zeta,a^3)$, where $\zeta$ is represented as:
\[
 \xymatrix@C=1.3cm@R=.3cm{\ar@{.}[r] & \underset{-2}{e}\ar@{-}[r] & \underset{-1}{e}\ar@{-}[r] & \underset{0}{g_6g_3} \ar@{-}[r] & \underset{1}{g_7g_4g_1} \ar@{-}[r] & \underset{2}{g_5g_2} \ar@{-}[r] & \underset{3}{e}\ar@{-}[r] &\underset{4}{e} \ar@{.}[r] & \rlap{\, .}}
\]
The conjugator is $(\theta,e')$, where $\theta$ is represented as:
\[
 \xymatrix@C=1.3cm@R=.3cm{\ar@{.}[r] & \underset{-2}{g_1}\ar@{-}[r] & \underset{-1}{g_2}\ar@{-}[r] & \underset{0}{g_6^{-1}} \ar@{-}[r] & \underset{1}{g_7^{-1}} \ar@{-}[r] & \underset{2}{e} \ar@{-}[r] & \underset{3}{e}\ar@{-}[r] &\underset{4}{e} \ar@{.}[r] & \rlap{\, .}}
\]
\end{example}

\begin{comment}
\begin{lemma}[\cite{M_66} Proposition 3.6] \label{to_conj_anycase} 
 Let $(\eta,b)$ and $(\eta',b')$ be in $G\wr L$ with $b$ of finite order $K$. Then $(\eta,b)$ and $(\eta',b')$ are conjugate if and only if there exists $d\in L$ such that $b'=d^{-1}bd$ and for every right coset $\langle b\rangle t$, $\pi_{\langle b\rangle t}(\eta)=\pi_{\langle b\rangle t}(\eta'^d)$.
\end{lemma}

\begin{proof}
Let $T$ be a right transversal of $\langle b\rangle $ in $L$.\\
Suppose $\pi_{\langle b\rangle t}(\eta)= \pi_{\langle b\rangle t}(\eta'^d)$. Since $b^{K}=e$, it suffices to take $u=0$ and $v=K-1$ in Lemma \ref{to_be_conj_prod} to show that $(\eta,b)$ and $(\eta',b')$ are conjugate.

Now assume there exists $d\in L$ such that $b'=d^{-1}bd$ and for any $t\in T$ there exists $\alpha_t\in G$ such that
  \[
   \prod_{k=0}^{K-1}\eta'^d(b^{-k} t)=\alpha_t \prod_{k=0}^{K-1}\eta(b^{-k} t)\rangle \alpha_t ^{-1}.
  \]
Define $\theta\in\bigoplus_{i\in L}G$ by 
\[
 \theta(b^{-k}t)=\left(\prod_{l=k}^{K-1} \eta(b^{-l}t)\right)\alpha_t^{-1}\left(\prod_{l=k}^{K-1} \eta'^{d}(b^{-l}t)\right)^{-1} ,
\]
for every $t\in T$ and every $k\in\{0,\ldots,K-1\}$, and verify that (\ref{equ_conj_transversal}) is satisfied.
\end{proof}
\end{comment}

\begin{corollary}\label{conj_rep_supp_singl}
 Let $(\eta,b)\in G\wr L$, let $T$ be a right transversal of $\langle b\rangle$ in $L$ and let $\{t_1,\ldots,t_k\}\subset T$ be the set of elements of $T$ such that $\pi_{\langle b\rangle t_j}(\eta)\neq e$ for $j\in\{1,\ldots,k\}$. Then $(\eta,b)$ is conjugate to an element $(\eta',b)$ for which $\supp(\eta')\cap \langle b\rangle t_j$ is a singleton $\{l_j\}$ for each $j\in\{1,\ldots,k\}$. Moreover,
 \begin{enumerate}[A)]
  \item $\eta'(l_j)=\pi_{\langle b\rangle t_j}(\eta)$ if $b$ is of infinite order, and
  \item $\eta'(l_j) \in \pi_{\langle b\rangle t_j}(\eta)$ if $b$ is of finite order.  
 \end{enumerate}
\end{corollary}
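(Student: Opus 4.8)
The plan is to invoke Lemma~\ref{to_conj_anycase} with the conjugator taken to be $d=e'$, so that the condition $b'=d^{-1}bd$ holds trivially with $b'=b$, and (since $\eta'^{e'}=\eta'$) the criterion for $(\eta,b)$ and $(\eta',b)$ to be conjugate reduces to the equalities $\pi_{\langle b\rangle t}(\eta)=\pi_{\langle b\rangle t}(\eta')$ ranging over all right cosets $\langle b\rangle t$ of $\langle b\rangle$. So it suffices to exhibit an $\eta'$ with the asserted support property and matching $\pi$-invariants.

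First I would record that $\pi_{\langle b\rangle t}(\eta)$ depends only on the coset $\langle b\rangle t$ and not on the representative $t$: replacing $t$ by $b^{m}t$ and reindexing the (finite) product by $k\mapsto k+m$ leaves $\prod_{k=-\infty}^{\infty}\eta(b^{-k}t)$ unchanged when $b$ has infinite order, and cyclically permutes the factors of $\prod_{k=0}^{K-1}\eta(b^{-k}t)$ — hence alters it only by a conjugation in $G$, which is invisible in $G_\sim$ — when $b$ has finite order $K$. In particular, every right coset of $\langle b\rangle$ either coincides with some $\langle b\rangle t_j$ or satisfies $\pi_{\langle b\rangle t}(\eta)=e$, because the unique $T$-representative of any coset is either one of the $t_j$ or has trivial $\pi$-value.

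Next I would construct $\eta'$. For each $j\in\{1,\dots,k\}$ choose any $l_j\in\langle b\rangle t_j$ (for instance $l_j=t_j$), and let $\eta'\in\bigoplus_{i\in L}G$ be the function supported exactly on $\{l_1,\dots,l_k\}$ with $\eta'(l_j):=\pi_{\langle b\rangle t_j}(\eta)$ when $b$ has infinite order, and $\eta'(l_j)$ equal to some fixed representative of the conjugacy class $\pi_{\langle b\rangle t_j}(\eta)\in G_\sim$ when $b$ has finite order (this representative is non-trivial, so $l_j$ is genuinely in $\supp(\eta')$). This $\eta'$ has finite support since $k<\infty$ — only the finitely many cosets meeting the finite set $\supp(\eta)$ are involved — and by construction $\supp(\eta')\cap\langle b\rangle t_j=\{l_j\}$ is a singleton realizing conclusions A) and B). Finally I would verify the hypothesis of Lemma~\ref{to_conj_anycase}: for a coset $\langle b\rangle t=\langle b\rangle t_j$, the product defining $\pi_{\langle b\rangle t_j}(\eta')$ collapses to the single factor $\eta'(l_j)$, which equals (infinite order) or lies in (finite order) $\pi_{\langle b\rangle t_j}(\eta)=\pi_{\langle b\rangle t}(\eta)$ by the previous step; for any other coset $\langle b\rangle t$ we have $\supp(\eta')\cap\langle b\rangle t=\emptyset$ and $\pi_{\langle b\rangle t}(\eta)=e$, so $\pi_{\langle b\rangle t}(\eta')=e=\pi_{\langle b\rangle t}(\eta)$. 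Hence $(\eta,b)$ and $(\eta',b)$ are conjugate. There is no serious obstacle; the only point that needs care is that $\pi_{\langle b\rangle t}$ is well defined only up to conjugacy in $G$ in the finite-order case, which is exactly why conclusion B) is phrased with $\in$ rather than with equality, and the rest is a direct translation of Lemma~\ref{to_conj_anycase}.
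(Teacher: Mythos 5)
Your proposal is correct and follows essentially the same route as the paper: invoke Lemma~\ref{to_conj_anycase} with conjugator $d=e'$, define $\eta'$ to be supported on one chosen point $l_j$ per coset with value $\pi_{\langle b\rangle t_j}(\eta)$ (or a representative thereof in the finite-order case), and check the coset-wise $\pi$-condition. You supply a few more details than the paper (well-definedness of $\pi$ on cosets, the explicit verification that the products collapse), but the argument is the same.
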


\begin{proof}
 This is an immediate consequence of Lemma \ref{to_conj_anycase}. Indeed choose $d=e'$ and for every $j\in \{1,\ldots,k\}$ choose some $l_j\in\langle b\rangle t_j$. Define $\eta'$ as $\eta'(l_j)=\pi_{\langle b\rangle t_j}(\eta)$ if $b$ is of infinite order, $\eta'(l_j) \in \pi_{\langle b\rangle t_j}(\eta)$ if $b$ is of finite order, and $\eta'(l)=e$ if $l\notin\{l_1,\ldots,l_k\}$.
 
Then $(\eta',b)$ satisfies the hypotheses of Lemma \ref{to_conj_anycase}, and $\eta'$ satisfies the corollary.
\end{proof}

%%%%%%%%%%%%%%%%%%%%%%%%%%%%%%%%%%%%%%%%%%%%%%%%%%%%%%%%%%%%%%%%%%%%%%%%%%%%%%%%%%%%%%%%%%%%%%%%%%%%%%%%%%%%%%%%%%%%%%%%%%%%%%%%%%%%%%%%%%%%%%%%%%%%%%%%%%%%%%%%%%%%%%%%%%%%%%%%%%%%%%%%%%

\section{Conjugacy class length and conjugacy growth series in $G\wr L$}\label{About_conj_lenght}
From now on we assume that the generating sets $X$ and $Y$ of $L$ and $G$, respectively, are finite. This section explains how one can shorten an element in $G\wr L$ by conjugation until it is of minimal length in its conjugacy class. We use Lemma \ref{to_conj_anycase}. Since the order of the cursor position, finite or infinite, influences the computations, we make the following distinction.

The elements $(\eta,b)$ in $G\wr L$ with $b$ of infinite order are called of {\em type} A. The words $w$ over a generating set of $G\wr L$ satisfying $\overline{w}$ is of type A are called of {\em type} A, and a conjugacy class of an element of type $A$ is also called of {\em type} A. We denote by $(G\wr L)^A$ the set of elements of $G\wr L$ of type $A$ and by $(G\wr L)_\sim ^A$ the set of conjugacy classes of type $A$. The contribution to the conjugacy growth series of $G\wr L$ of the elements of type $A$ will be denoted by $\cgs_{(G\wr L,\vec{Y})}^A$.

The elements $(\eta,b)$ in $G\wr L$ with $b$ of finite order are called of {\em type} B, and similarly use type B for all other items mentioned above.
Hence $$\cgs_{(G\wr L,\vec{Y})}=\cgs_{(G\wr L,\vec{Y})}^A+\cgs_{(G\wr L,\vec{Y})}^B.$$

\begin{definition}
Let $(\eta,b)\in G\wr L$, let $T$ be a right transversal of the subgroup $\langle b \rangle$ of $L$ and let $t_1,\ldots,t_k\in T$ be the elements of $T$ such that $\pi_{\langle b\rangle t_j}(\eta)\neq e$ for $j\in\{1,\ldots,k\}$.

\begin{enumerate}
  \item An {\em optimal coset walk for} $(\eta,b)$ is a walk of smallest possible length on $\Gra(L,X)$ starting at the origin, visiting some point $l_j \in \langle b \rangle t_j$ for all $1\leq j \leq k$ and ending at $b$.
\item An {\em optimal conjugacy walk for} $(\eta,b)$ is an optimal coset walk for $(\eta^{d^{-1}},d^{-1}bd)$ that is of minimal length among all optimal coset walks for $(\eta^{d'^{-1}},d'^{-1}bd')$ with $d'\in L$. \end{enumerate}
\end{definition}

Note that if $b=e'$ then an optimal coset walk for $(\eta,b)$ is equivalent to a minimal walk on $\Gra(L,X)$ as in Proposition \ref{length_in_wr_Mann}.

\begin{lemma}\label{min_conj_length_wr}
 Let $(\eta,b)\in G\wr L$ and let $\langle b\rangle t_1,\ldots,\langle b\rangle t_k$ be the right cosets of $\langle b\rangle$ in $L$ such that $\pi_{\langle b\rangle t_i}(\eta)\neq e$ for $i\in\{1,\ldots,k\}$. Then there is a conjugacy representative $(\zeta,d^{-1}bd)$ of $[(\eta,b)]$
 with the following properties. 
 \begin{enumerate}
  \item An optimal coset walk for $(\zeta,d^{-1}bd)$ is an optimal conjugacy walk for $(\eta,b)$.
  \item \label{singleton_for_coset}If such a walk intersects $d^{-1}\langle b\rangle t_i$ at $d^{-1}b^{a_i} t_i$, where $a_i\in\Z$, $i\in\{1,\ldots,k\}$, then $\supp(\zeta)=\{d^{-1}b^{a_1} t_1,\ldots,d^{-1}b^{a_k} t_k\}$ and 
  \begin{enumerate}[A)]
   \item $\zeta(d^{-1}b^{a_i} t_i)=\pi_{\langle b\rangle t_i}(\eta)$ if $b$ is of infinite order,
   \item $\zeta(d^{-1}b^{a_i} t_i)\in\min(\pi_{\langle b\rangle t_i}(\eta))_Y$ if $b$ is of finite order.
  \end{enumerate}
 \end{enumerate}
\end{lemma}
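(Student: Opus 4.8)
The plan is to combine Proposition~\ref{length_in_wr_Mann} (the length formula in $G\wr L$) with Lemma~\ref{to_conj_anycase} (the conjugacy criterion) and Corollary~\ref{conj_rep_supp_singl} (each coset contributes a singleton to the support after conjugation). First I would observe that by definition of the conjugacy length $|[(\eta,b)]|_{\vec Y}$, the minimum of element lengths over the conjugacy class is attained; so pick any minimal length representative $(\zeta_0,b_0)$ of $[(\eta,b)]$. By Lemma~\ref{to_conj_anycase} we have $b_0=d^{-1}bd$ for some $d\in L$, and the coset data $\pi_{\langle b\rangle t}(\eta)$ is a complete conjugacy invariant (up to the $L$-shift by $d$ and, in the finite-order case, up to $G$-conjugacy in each coset).

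The core of the argument is to split the length formula of Proposition~\ref{length_in_wr_Mann} into its two summands and minimise each one, while checking that the minima can be realised simultaneously by a single representative. For the second summand, $\sum_{v\in L}|\zeta(v)|_Y$: by Corollary~\ref{conj_rep_supp_singl} we may assume $\supp(\zeta)$ meets each nontrivial coset $d^{-1}\langle b\rangle t_i$ in exactly one point, and then this sum equals $\sum_{i=1}^k |\zeta(l_i)|_Y$, where $\zeta(l_i)$ ranges over the coset product $\pi_{\langle b\rangle t_i}(\eta)$ in the infinite-order case (a fixed element, nothing to minimise) and over its $G$-conjugacy class in the finite-order case (so the minimum is $\sum_i |\pi_{\langle b\rangle t_i}(\eta)|_{\sim,Y}$, realised by choosing $\zeta(l_i)\in\min(\pi_{\langle b\rangle t_i}(\eta))_Y$). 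Crucially, this choice of the \emph{values} $\zeta(l_i)$ is independent of \emph{where} in the coset we place them and of the conjugator $d$, so it does not interfere with minimising the first summand. For the first summand — the length of a walk on $\Gra(L,X)$ starting at the origin, visiting one point of each $d^{-1}\langle b\rangle t_i$ and ending at $d^{-1}bd$ — this is exactly the length of an optimal coset walk for $(\zeta,d^{-1}bd)$, and minimising it over all conjugators $d'$ is by definition an optimal conjugacy walk for $(\eta,b)$. Choosing $d$ to realise this minimum and placing the support points $d^{-1}b^{a_i}t_i$ at the coset intersections of that walk gives a representative $(\zeta,d^{-1}bd)$ whose length is the sum of the two separately-minimised quantities, hence minimal; so $(\zeta,d^{-1}bd)$ is a conjugacy representative of $[(\eta,b)]$ satisfying (1) and (2).

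The main obstacle is the legitimacy of minimising the two summands independently. A priori the position of the support points (which affects the walk length) and their values (which affect the second summand) could be coupled; the point to nail down is that they are not — the set of admissible value-tuples $(\zeta(l_1),\dots,\zeta(l_k))$ allowed by Lemma~\ref{to_conj_anycase} depends only on the cosets $\langle b\rangle t_i$ and not on the chosen representatives $l_i$ within them, nor on $d$, so one is genuinely optimising over a product set and the two minima combine additively. A secondary point, worth a sentence, is that an optimal conjugacy walk exists at all: the walk length is a nonnegative integer, the relevant cosets $d^{-1}\langle b\rangle t_i$ only depend on $d$ through the finitely many values that keep the endpoint $d^{-1}bd$ and the visited cosets within bounded distance of the origin consistent with some walk of length $\le$ that of the trivial-conjugator coset walk, so the infimum defining the optimal conjugacy walk is over a nonempty set bounded below and is attained.
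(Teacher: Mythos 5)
Your proposal is correct and follows essentially the same route as the paper's proof: both apply Proposition~\ref{length_in_wr_Mann} to split the length into the walk summand and the $G$-component summand, both use Corollary~\ref{conj_rep_supp_singl} to reduce to one support point per coset, both invoke Lemma~\ref{to_conj_anycase} to see that the $G$-component contribution is forced (it is $\sum_i|\pi_{\langle b\rangle t_i}(\eta)|_Y$ regardless of placement or conjugator), and both then translate by a suitable $d$ to minimise the walk summand. The paper does this constructively — first optimise with $d=e'$, then conjugate by $(\vec e,d)$ — whereas you phrase it as minimising the two summands independently and arguing that this is legitimate because the admissible value-tuples depend only on the cosets; that independence observation is exactly what the paper leaves implicit, so your version is, if anything, a bit more careful. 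Your closing remark about attainment of the optimal conjugacy walk could be compressed: the walk lengths form a nonempty set of nonnegative integers, which automatically has a minimum, without needing to bound the relevant $d$.
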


\begin{proof}
For all $i\in\{1,\ldots,k\}$ let $l_i\in \langle b\rangle t_i$ be such that an optimal coset walk for $(\eta,b)$ passes through $l_i$. Corollary \ref{conj_rep_supp_singl} implies that $(\eta,b)$ is conjugate to an element $(\xi,b)$ of smaller or equal length that satisfies $\supp(\xi)=\{l_1,\ldots,l_k\}$, and
 for all $i\in\{1,\ldots,k\}$, if (i) $b$ is of infinite order, then $\xi(l_i)=\pi_{\langle b \rangle t_i}(\eta)$, and if (ii) $b$ is of finite order, then $\xi(l_i)\in\min(\pi_{\langle b \rangle t_i}(\eta))_Y$.

In this case the length of $(\xi,b)$ is equal to the sum of
\begin{enumerate}
 \item the length of an optimal coset walk of $(\eta,b)$ on $\Gra(L,X)$, and
 \item $\sum_{i=1}^k|\pi_{\langle b \rangle t_i}(\eta)|_Y$.
 \begin{comment}
 \item 	\begin{enumerate}[A)]
         \item $\sum_{i=1}^k|\pi_{\langle b \rangle t_i}(\eta)|_Y$ if $b$ is of infinite order, 
         \item $\sum_{i=1}^k|[\pi_{\langle b \rangle t_i}(\eta)]|_Y$ if $b$ is of finite order.
        \end{enumerate}
        \end{comment}
\end{enumerate}
A consequence of Lemma \ref{to_conj_anycase} is that inside the conjugacy class of $(\eta,b)$ in $G\wr L$ no element $(\zeta, c)$ has the value $\sum_{l\in\supp(\zeta)}|\zeta(l)|_Y$ strictly smaller then $\sum_{l\in\supp(\xi)}|\xi(l)|_Y$.\\

Now we can also translate the support of $\xi$ along $\Gra(L,X)$ and modify the cursor position in order to minimize the walk on $\Gra(L,X)$. More precisely, taking a suitable $d\in L$ and conjugating $(\xi,b)$ by $(\vec{e},d)$ we find the element $(\zeta,d^{-1}bd):=(\vec{e},d)^{-1}(\xi,b)(\vec{e},d)=(\xi^{d^{-1}},d^{-1}bd)$. Hence this $\zeta$ satisfies $\zeta^d=\xi$ and $\supp(\zeta)=\{d^{-1}l_1,\ldots,d^{-1}l_k\}$.\\
\end{proof}

\begin{corollary}
The conjugacy length of $(\eta,b)$ is equal to the sum of
\begin{enumerate}
 \item the length of an optimal conjugacy walk for $(\eta,b)$, and
 \item $\sum_{i=1}^k|\pi_{\langle b \rangle t_i}(\eta)|_Y$.
 \begin{comment}
 \item \begin{enumerate}[A)]
         \item $\sum_{i=1}^k|\pi_{\langle b \rangle t_i}(\eta)|_Y$ if $b$ is of infinite order, 
         \item $\sum_{i=1}^k|[\pi_{\langle b \rangle t_i}(\eta)]|_Y$ if $b$ is of finite order.
        \end{enumerate}
        \end{comment}
\end{enumerate}
\end{corollary}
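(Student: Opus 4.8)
The plan is to derive this corollary as an immediate consequence of Lemma \ref{min_conj_length_wr}, which already does all the work; the corollary just repackages the length computation carried out inside that proof. First I would invoke Lemma \ref{min_conj_length_wr} to obtain a conjugacy representative $(\zeta,d^{-1}bd)$ of $[(\eta,b)]$ whose support is the set $\{d^{-1}b^{a_1}t_1,\ldots,d^{-1}b^{a_k}t_k\}$, where the $a_i$ record where an optimal conjugacy walk for $(\eta,b)$ meets the cosets, and whose entries are $\zeta(d^{-1}b^{a_i}t_i)=\pi_{\langle b\rangle t_i}(\eta)$ (infinite order) or a minimal-length element of $\pi_{\langle b\rangle t_i}(\eta)$ (finite order). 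By Proposition \ref{length_in_wr_Mann}, the length of $(\zeta,d^{-1}bd)$ relative to $\vec{Y}$ is the sum of the length of a minimal walk on $\Gra(L,X)$ from the origin through $\supp(\zeta)$ ending at $d^{-1}bd$, plus $\sum_{l\in\supp(\zeta)}|\zeta(l)|_Y$. By construction of $(\zeta,d^{-1}bd)$, the minimal walk appearing in the first term is exactly an optimal conjugacy walk for $(\eta,b)$ (this is item (1) of Lemma \ref{min_conj_length_wr}), and the second term equals $\sum_{i=1}^k|\pi_{\langle b\rangle t_i}(\eta)|_Y$, since $|\zeta(d^{-1}b^{a_i}t_i)|_Y=|\pi_{\langle b\rangle t_i}(\eta)|_Y$ in both cases — in the finite-order case because $\zeta(d^{-1}b^{a_i}t_i)$ was chosen of minimal length in its conjugacy class, so that $|\zeta(d^{-1}b^{a_i}t_i)|_Y=|[\pi_{\langle b\rangle t_i}(\eta)]|_Y=:|\pi_{\langle b\rangle t_i}(\eta)|_Y$ under the notational convention fixed earlier. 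Hence $|(\eta,b)|_{\sim,\vec{Y}}\leq|(\zeta,d^{-1}bd)|_{\vec{Y}}$ equals the claimed sum.

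For the reverse inequality I would argue that no element of $[(\eta,b)]$ can have length smaller than this sum. Any element $(\vartheta,c)$ conjugate to $(\eta,b)$ has, by Proposition \ref{length_in_wr_Mann}, length equal to (walk length) $+\sum_{l\in\supp(\vartheta)}|\vartheta(l)|_Y$. For the first summand: $c$ is conjugate to $b$, say $c=d'^{-1}bd'$, and by Lemma \ref{to_conj_anycase} the cosets $\langle c\rangle t$ on which $\pi_{\langle c\rangle t}(\vartheta)\neq e$ are precisely the translates $d'^{-1}\langle b\rangle t_i$, so $\supp(\vartheta)$ meets each of these $k$ cosets; therefore the walk realizing the length of $(\vartheta,c)$ is in particular a coset walk for $(\vartheta,c)=(\eta^{d'^{-1}},d'^{-1}bd')$, hence has length at least that of an optimal conjugacy walk for $(\eta,b)$ by definition of the latter. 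For the second summand: the observation recorded in the proof of Lemma \ref{min_conj_length_wr} (itself a consequence of Lemma \ref{to_conj_anycase}) says that $\sum_{l\in\supp(\vartheta)}|\vartheta(l)|_Y$ cannot be strictly smaller than $\sum_{i=1}^k|\pi_{\langle b\rangle t_i}(\eta)|_Y$, because the product over each coset is conjugacy-invariant and the support meets each coset in at least one point, so the sum of the lengths of the entries over a coset is at least the conjugacy length of that product. Adding the two lower bounds gives the matching inequality.

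I do not anticipate a serious obstacle, since this is essentially a bookkeeping corollary of the two preceding results; the only point requiring mild care is making sure the two lower bounds (for the walk and for the total $Y$-length) can be attained \emph{simultaneously} by a single conjugate, which is exactly what Lemma \ref{min_conj_length_wr} guarantees by choosing the support along an optimal conjugacy walk. I would also be careful to treat the $b=e'$ case consistently — there an optimal conjugacy walk coincides with a minimal walk in the sense of Proposition \ref{length_in_wr_Mann}, so the formula reduces to the known element-length formula, which is a useful sanity check to mention. Beyond that, the argument is a direct unwinding of definitions, so the write-up can be kept to a few lines referencing Lemma \ref{min_conj_length_wr} and Proposition \ref{length_in_wr_Mann}.
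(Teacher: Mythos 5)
The proposal is correct and takes essentially the same approach as the paper, which in fact gives no separate proof — the corollary is stated as an immediate consequence of Lemma \ref{min_conj_length_wr}, whose proof already exhibits a conjugate $(\zeta,d^{-1}bd)$ realizing the claimed length and observes that the two summands are separately minimized over the conjugacy class. You merely make the two inequalities explicit; your mild care about the finite-order notational abuse and the $b=e'$ degeneration is appropriate but not strictly required.
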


\begin{example}
Let $(\eta,a^3)$ be as in Example \ref{illustration}. We choose $T=\{e',a,a^2\}$ as the right transversal of $\langle a^3\rangle$ in $\Z$, and let $t_1:=e'$, $t_2=a$ and $t_3:=a^2$ be the elements of $T$ such that $\pi_{\langle a^3\rangle t_i}(\eta)\neq e$. Since $\Z$ is abelian and the direct walk on $\Gra(\Z,\{a,a^{-1}\})$ from $e'$ to $a^3$ visits every coset $\langle a^3\rangle t_j$, this is the only optimal coset walk for $(\eta,a^3)$.

The optimal conjugacy walk for $(\eta,a^3)$ starts at $e'$ and creates $g_6g_3$, goes to $a$ and creates $g_7g_4g_1$, then goes to $a^2$ and creates $g_5g_2$, and ends up at $a^3$. This is an optimal walk for the element $(\zeta,a^3)$ for which $\supp(\zeta)\subset\{e',a,a^2\}$ as in Example \ref{illustration}. In this case the $d$ and the $a_i$'s in Lemma \ref{min_conj_length_wr} are all trivial and $l_1=e$, $l_2=a$ and $l_3=a^2$. The length of the walk on $\Gra(\Z,\{a,a^{-1}\})$ is 3 and the sum the length of the components is $|g_6g_3|_Y+|g_7g_4g_1|_Y+|g_5g_2|_Y$, hence  $|[(\eta,a^3)]|_{\vec{Y}}=3+|g_6g_3|_Y+|g_7g_4g_1|_Y+|g_5g_2|_Y$. Note that $(\eta,b^3)$ is also conjugate to the element $(\zeta',a^3)\in\min([(\eta,a^3)])$ where $\zeta'$ is represented as
\[
 \xymatrix@C=1.3cm@R=.3cm{\ar@{.}[r] & \underset{-2}{e}\ar@{-}[r] & \underset{-1}{e}\ar@{-}[r] & \underset{0}{e} \ar@{-}[r] & \underset{1}{g_7g_4g_1} \ar@{-}[r] & \underset{2}{g_5g_2} \ar@{-}[r] & \underset{3}{g_6g_3}\ar@{-}[r] &\underset{4}{e} \ar@{.}[r] & \rlap{\, .}}
\]
In this case the conjugator is $(\theta',e')$, where $\theta'$ is represented as:
\[
 \xymatrix@C=1.3cm@R=.3cm{\ar@{.}[r] & \underset{-2}{g_1}\ar@{-}[r] & \underset{-1}{g_2}\ar@{-}[r] & \underset{0}{g_3} \ar@{-}[r] & \underset{1}{g_7^{-1}} \ar@{-}[r] & \underset{2}{e} \ar@{-}[r] & \underset{3}{e}\ar@{-}[r] &\underset{4}{e} \ar@{.}[r] & \rlap{\, .}}
\]

\end{example}

Thus finding the conjugacy length of an element involves finding a conjugate of the cursor position which produces an optimal walk on $\Gra(L,X)$.
We will see in the next section that when $\Gra(L,X)$ is a tree, a $(\zeta,d^{-1}bd)$ as in Lemma \ref{min_conj_length_wr} can be chosen such that $d^{-1}bd$ is minimal in its conjugacy class (in $L$), hence cyclically reduced if $b$ is of infinite order.

\subsection{The case when the Cayley graph of $L$ is a tree}
From now on, we assume that $\Gra(L,X)$ is a tree. This is precisely the case when $L$ admits the presentation 
\[
 L=\langle a_1,\ldots, a_M,b_1,\ldots,b_N\,|\,b_1^2,\ldots,b_N ^2\rangle.
\]
Hence we assume that $X=\{a_1,a_1 ^{-1},\ldots,a_M,a_M^{-1},b_1,\ldots,b_N\}$ and each generator $b_j$ is equal to its inverse. This implies that $\Gra(L,X)$ is the $(2M+N)$-regular tree.

In this case $\geolNorm(L,X)=\geol(L,X)$ is the set of freely reduced words on $X$. For simplicity we will make no distinction between elements of $L$ and freely reduced words, and by a word on $X$, we will mean freely reduced word.

An element of $L$ is minimal in its conjugacy class in $L$ if and only if it is cyclically reduced or equal to one of the torsion generators $b_j$ for some $j\in\{1,\ldots,N\}$ or is trivial, so the set of conjugacy geodesics $\geocl(L,X)$ is given by
\[
 \geocl(L,X)=\geocl(L,X)^A\cup\{b_1,\ldots,b_N\}\cup\{\epsilon\},
\]
where $\geocl(L,X)^A$ denotes the set of cyclically reduced words, whose growth series computation is similar to that in \cite[Theorem 1.1]{R10}.

\begin{lemma}[see Theorem 1.1 \cite{R10}]\label{cyl_red_A} 
  In the group $L$ as above the number of cyclically reduced words of length $k>0$ is 
  \[
    (2M+N-1)^k+(-1)^k(M+N-1)+M,
  \]
and the associated growth series written $F_{\geocl(L,X)^A}(z) $ is given by
\[
 F_{\geocl(L,X)^A}(z)=\frac{1}{1-(2M+N-1)z}+\frac{(2M+N)z^2-(N-1)z-1}{1-z^2}.
\]
\end{lemma}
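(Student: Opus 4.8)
The plan is to count cyclically reduced words of length $k>0$ in $L=\langle a_1,\ldots,a_M,b_1,\ldots,b_N\mid b_1^2,\ldots,b_N^2\rangle$ directly, and then sum the resulting generating function. Recall that a freely reduced word of length $k\geq 1$ over $X=\{a_1^{\pm1},\ldots,a_M^{\pm1},b_1,\ldots,b_N\}$ is cyclically reduced precisely when its last letter is not the inverse of its first letter; since each $b_j$ is its own inverse, the only forbidden cyclic adjacency is when the word begins with some letter $x$ and ends with $x^{-1}$, and this includes the case where it begins and ends with the same $b_j$. So the first step is to set up the count of freely reduced words with a prescribed first letter.

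First I would recall that the number of freely reduced words of length $k\geq 1$ over $X$ (with $\card X=2M+N$) is $(2M+N)(2M+N-1)^{k-1}$, since there are $2M+N$ choices for the first letter and $2M+N-1$ for each subsequent letter (anything except the inverse of the preceding one). By symmetry among the letters, the number of freely reduced words of length $k$ beginning with a \emph{fixed} letter $x$ is $(2M+N-1)^{k-1}$. Next I would count, for $k\geq 2$, the freely reduced words of length $k$ that begin with $x$ and end with $x^{-1}$: this is the number of freely reduced words $x y_2\cdots y_{k-1} x^{-1}$, i.e.\ freely reduced words of length $k$ with both endpoints prescribed and compatible. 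A clean way is a transfer-matrix / recursion argument: let $c_k$ be the number of freely reduced words of length $k$ starting at $x$ and ending at $x^{-1}$, and $d_k$ the number starting at $x$ and ending at a fixed letter $\neq x^{-1}$; then $c_k = (2M+N-1)^{k-1} - (2M+N-1)d_{k-1}'$-type relations close up, or more simply use the standard fact that in the free product the number of reduced words of length $k$ from one fixed ``direction'' to another is $(2M+N-1)^{k-1} \pm (\text{correction})$ depending on parity, governed by the two eigenvalues $2M+N-1$ and $-1$ of the relevant adjacency structure. The upshot I expect: for $k\geq 1$ the number of cyclically reduced words of length $k$ equals $(2M+N)(2M+N-1)^{k-1}$ minus the number that are ``cyclically bad'', and carrying the parity correction through yields exactly $(2M+N-1)^k + (-1)^k(M+N-1) + M$. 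I would sanity-check at $k=1$ (every single letter is cyclically reduced, giving $2M+N$, and the formula gives $(2M+N-1)+(-1)(M+N-1)+M = 2M+N$) and at $k=2$ (reduced words $xy$ with $y\neq x^{-1}$ and additionally $y\neq x^{-1}$ again for cyclic reducedness, so $(2M+N)(2M+N-2)$; the formula gives $(2M+N-1)^2+(M+N-1)+M$, which one checks equals $(2M+N)(2M+N-2)$).

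Finally I would assemble the generating function:
\[
F_{\geocl(L,X)^A}(z)=\sum_{k\geq 1}\bigl((2M+N-1)^k+(-1)^k(M+N-1)+M\bigr)z^k.
\]
The first piece sums to $\dfrac{(2M+N-1)z}{1-(2M+N-1)z}=\dfrac{1}{1-(2M+N-1)z}-1$; the second piece sums to $(M+N-1)\dfrac{-z}{1+z}$; the third piece sums to $M\dfrac{z}{1-z}$. Combining the last two over the common denominator $1-z^2$ and folding in the $-1$ from the first piece gives $\dfrac{(2M+N)z^2-(N-1)z-1}{1-z^2}$ after collecting terms, which matches the claimed closed form. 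I expect the main obstacle to be the exact parity bookkeeping in the count of cyclically bad reduced words (the ``begins with $x$, ends with $x^{-1}$'' enumeration), where one must be careful that torsion generators $b_j$ contribute to this forbidden set while non-torsion generators contribute a genuinely two-sided constraint; getting the $(-1)^k$ correction term and the split of the constant into $M+N-1$ and $M$ exactly right is the delicate part. Everything after that is a routine partial-fractions computation.
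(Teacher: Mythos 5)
Your high-level plan — count freely reduced words of length $k$ and subtract the ``cyclically bad'' ones $x_1\cdots x_k$ with $x_k=x_1^{-1}$ — is sound and slightly different from the paper's. The paper follows Rivin and computes the trace of $A^k$, where $A$ is the link-graph adjacency matrix ($A_{xy}=1$ iff $y\neq x^{-1}$, with one vertex per element of $X$); writing $A=J-P$ with $J$ all-ones and $P$ the involution $x\mapsto x^{-1}$, the eigenvalues are $2M+N-1$ (multiplicity $1$), $+1$ (multiplicity $M$) and $-1$ (multiplicity $M+N-1$), giving the formula directly. Your version amounts to computing $\mathbf{1}^{T}A^{k-1}\mathbf{1}-\operatorname{tr}(PA^{k-1})$, which is algebraically the same trace split into two pieces; you save a little because the first piece only needs the row-sum eigenvector, but the second piece still requires the transfer-matrix/spectral work you sketch, so nothing is really gained or lost.

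However, \emph{both} of your sanity checks are wrong, and in a way that suggests a conceptual slip rather than just arithmetic. For $k=1$ the formula gives $(2M+N-1)-(M+N-1)+M=2M$, not $2M+N$, and indeed there are only $2M$ cyclically reduced words of length $1$: since $b_j=b_j^{-1}$, each $b_j$ fails the test $x_k\neq x_1^{-1}$ and is \emph{not} in $\geocl(L,X)^A$ — this is exactly why the paper writes $\geocl(L,X)=\geocl(L,X)^A\cup\{b_1,\ldots,b_N\}\cup\{\epsilon\}$ just before the lemma, so that the $b_j$ are counted separately. Your claim that ``every single letter is cyclically reduced, giving $2M+N$'' would double-count them. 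For $k=2$ the formula gives $(2M+N-1)^2+(M+N-1)+M=(2M+N)(2M+N-1)$, not $(2M+N)(2M+N-2)$, and the correct count is also $(2M+N)(2M+N-1)$: at length $2$ the freely-reduced condition $y\neq x^{-1}$ and the cyclically-reduced condition are the \emph{same} condition, so every freely reduced word of length $2$ is already cyclically reduced, and you subtracted the constraint twice. The two errors in each check happen to agree with each other, which is why they did not alert you; but since the $k=1$ discrepancy is precisely the torsion-generator subtlety that drives the $(-1)^k$ term, you should redo these checks before trusting the parity bookkeeping you flag as the delicate step.
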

The proof is similar to the proof of Theorem 1.1 \cite{R10}. In \cite{R10} Rivin counts the number of cyclically reduced words of length $k$ in a free group by counting the number of closed paths of length $k$ in a graph having the generators (and their inverses) as vertices, where each vertex is connected to all other vertices except for its inverse. He then computes the trace of the $k^\textup{th}$ power of the adjacency matrix. We generalize his result by also adding a vertex for each generator of order $2$ that is connected to all the vertices excepted itself. We compute the trace of the $k^\textup{th}$ power of the adjacency matrix in the same way as Rivin.
\begin{remark}\label{RC_cycl_redu}
If $2M+N\geq 2$, then 
 \[
  \RC(F_{\geocl(L,X)^A}(z))=\frac{1}{2M+N-1}.
 \]

\end{remark}

Now let us adapt Lemma \ref{min_conj_length_wr} to this kind of group.
\begin{lemma}\label{min_conj_length_wr_tree}
 Let $(\eta,b)\in G\wr L$ be such that $b\neq e'$. Then there exists $(\zeta,c)\in\min([(\eta,b)])_{\vec{Y}}$ such that either $c$ is of infinite order and cyclically reduced, or $c$ is one of the torsion generators $b_j$ of $L$ for some $j\in\{1,\ldots, N\}$. %Hence b is minimal in its conjugacy class in $L$.
  \end{lemma}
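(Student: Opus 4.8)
The plan is to take an arbitrary $(\eta,b)\in G\wr L$ with $b\neq e'$ and show that by conjugating we can simultaneously make the cursor position minimal in its conjugacy class in $L$ and keep the element of minimal length in its own conjugacy class in $G\wr L$. First I would apply Lemma \ref{min_conj_length_wr} to get a conjugacy representative $(\zeta_0,d_0^{-1}bd_0)\in\min([(\eta,b)])_{\vec Y}$ whose length decomposes, by the corollary following that lemma, as the length of an optimal conjugacy walk for $(\eta,b)$ plus $\sum_{i=1}^k|\pi_{\langle b\rangle t_i}(\eta)|_Y$. The second summand is intrinsic to the conjugacy class (it does not depend on the choice of $d$), so the whole problem reduces to a statement about walks in the tree $\Gra(L,X)$: I must show that an optimal conjugacy walk can be realized by a walk whose endpoints are the origin and a point of the form $d^{-1}cd$ where $c=d^{-1}bd$ is cyclically reduced (or one of the $b_j$, or — excluded here since $b\neq e'$ — trivial).

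The key geometric step uses that $\Gra(L,X)$ is a tree. Pick any $d\in L$ realizing the optimal conjugacy walk; then $c:=d^{-1}bd$. In a free product of $\Z$'s and $C_2$'s, every element is conjugate to a cyclically reduced element or to a torsion generator $b_j$; write $b=u^{-1}cu$ with $c$ of that minimal form and $u$ chosen so that $|u|$ is as large as possible among such expressions — geometrically, in the Bass–Serre tree / Cayley tree, $u$ traces the "approach path" to the axis of $c$ (if $c$ has infinite order, it acts as a hyperbolic isometry with an axis; if $c=b_j$ it fixes an edge midpoint). The point is that replacing $d$ by $u$ cannot lengthen the optimal coset walk: the cosets $d^{-1}\langle b\rangle t_i$ all lie along translates of the axis, and a minimal walk from the origin visiting one point in each coset and ending at $c$ can be "pushed onto the axis" without increasing its length, precisely because in a tree the convex hull of the relevant points together with the axis of $c$ is itself a finite subtree and geodesics are unique. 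So I would argue: the length of an optimal coset walk for $(\eta^{u^{-1}},u^{-1}bu)=(\eta^{u^{-1}},c)$ is $\le$ the length of an optimal coset walk for $(\eta^{d^{-1}},c)$, hence the former is itself an optimal conjugacy walk for $(\eta,b)$, and then apply Lemma \ref{min_conj_length_wr} again with this choice of conjugator to produce the desired $(\zeta,c)$.

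Concretely the steps are: (1) record that by Lemma \ref{min_conj_length_wr} and its corollary the conjugacy length splits as walk-length plus a conjugacy-invariant $G$-part; (2) recall that in $L$ every non-trivial element is conjugate to a cyclically reduced word or to a torsion generator $b_j$, and fix such a form $c$ for $b$, together with a conjugator; (3) using uniqueness of geodesics in the tree, show that moving the configuration so that the cursor becomes $c$ does not increase the length of an optimal coset walk, i.e. one obtains an optimal conjugacy walk for $(\eta,b)$ whose terminal vertex corresponds to $c$; (4) invoke Lemma \ref{min_conj_length_wr} one more time, now relative to this optimal conjugacy walk, to get $(\zeta,c)\in\min([(\eta,b)])_{\vec Y}$ with $c$ cyclically reduced or equal to some $b_j$.

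The main obstacle I anticipate is step (3): making rigorous the claim that conjugating the cursor into cyclically reduced form does not cost extra walk length. One has to be careful because the "optimal coset walk" must visit the cosets $\langle b\rangle t_i$, and translating by $d$ vs.\ by $u$ changes which representative $b^{a_i}t_i$ of each coset is nearest to the walk; the argument needs the tree structure (unique geodesics, the fact that the minimal walk through a finite set of points in a tree together with a fixed endpoint is controlled by the spanning subtree and its relation to the axis of $c$) to conclude that no length is lost. I would handle this by an explicit comparison: given the optimal conjugacy walk as a walk in the tree, conjugating by a suitable power of $c$ (equivalently sliding along the axis) or by the initial segment $u$ of $d$ toward the axis only removes a "tail" of the walk, never lengthens it, since in a tree any walk from the origin to $b$ already passes through $u^{-1}(\text{axis})$ and can be rerouted.
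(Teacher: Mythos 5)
Your proposal is correct and follows essentially the same route as the paper: reduce via Lemma \ref{min_conj_length_wr} to an optimal coset walk, write $b=w^{-1}cw$ with $c$ cyclically reduced (or a torsion generator $b_j$), and argue that conjugating by $w$ cannot lengthen the walk. For your step (3) the paper avoids invoking axes or hyperbolic isometries altogether and uses only the finite reduced-word structure of $b=w^{-1}cw$: any walk in the tree from $e'$ to $b$ must pass through $w^{-1}, w^{-1}c_1,\ldots,w^{-1}c$, so the support can be assumed to lie in $w^{-1}\bigl(T_0\cup c_1 T_1\cup\cdots\cup c_1\cdots c_{k-1}T_{k-1}\bigr)$ (Figure \ref{supportNU}), and translating by $w$ produces the configuration of Figure \ref{Better}, whose walk is visibly no longer — this is the explicit length comparison you identified as the main obstacle.
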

\begin{proof}
 Assume that $b$ is not cyclically reduced and $b\neq b_j$, for any $j\in\{1,\ldots,N\}$. We consider the two cases, when $b$ is of infinite order, and when $b$ is of finite order, separately. In both cases Lemma \ref{min_conj_length_wr} shows that $(\eta,b)$ is conjugate to an element $(\eta',b)$ of the same length for which $\supp(\eta')$ intersects each right coset of $\langle b\rangle$ at most once and an optimal walk for this element is an optimal coset walk. Hence up to conjugating $(\eta,b)$, one can assume that $(\eta,b)$ is of this form. We consider a particular suitable coset walk for $(\eta,b)$ and then after conjugation we show that $(\eta,b)$ is conjugate to an element $(\zeta,c)\in\min([(\eta,b)])_{\vec{Y}}$ with $c$ as claimed.
 \begin{enumerate}[A)]
  \item 
  
  If $b$ is of infinite order it can be written as $\overline{w}^{-1}\overline{c}\overline{w}$, where $w=w_1\ldots w_n$ and $c=c_1 \ldots c_k$ are in $X^*$ and freely reduced, and $w_1\neq c_1\neq {c_k}^{-1}\neq {w_1}$. For simplicity of notation, we write $w$ for $\overline{w}$ and $c$ for $\overline{c}$.
  
   Let us consider an optimal walk on $\Gra(L,X)$ starting at the origin, visiting each right coset $\langle b\rangle t$ of $\langle b\rangle$ such that $\pi_{\langle b\rangle t}(\eta)\neq e$, and ending at $b$. 
 For $r=\{0,\ldots,k-1\}$ let $T_r$ be the set of (freely reduced) words in $X$ that do not begin with $c_r^{-1}$ or $c_{r+1}$, where $c_0:=c_k$. Then we can assume that the optimal walk on $\Gra(L,X)$ starts at the origin, goes inside $w^{-1}T_0$ and comes back to $w^{-1}$, then goes to $w^{-1}c_1$ and inside $w^{-1}c_1T_1$, comes back to $w^{-1}c_1$, until it reaches $w^{-1}c_1\cdots c_{k-1}$, goes inside $w^{-1}c_1\cdots c_{k-1}T_{k-1}$, comes back to $w^{-1}c_1\cdots c_{k-1}$ and finally stops at $w^{-1}cw$. In other words, we assume
 \[
  \supp(\eta)\subset w^{-1}T_0\cup w^{-1}c_1 T_1 \cup\ldots \cup w^{-1}c_1\cdots c_{k-1} T_{k-1},
 \]
and that the walk on $\Gra(L,X)$ is as Figure \ref{supportNU}:

\begin{figure}[H]
 \[
  \xymatrix@R=.7cm@C=.2cm{ &&&&&  & & & & & & & & & & & &  &&&&&&\\
 &&&&& w^{-1}T_0 &  & & &w^{-1}c_1 T_1 & & & & & w^{-1}c_1\cdots c_{k-1} T_{k-1} & & & &&&&&&\\
 \underset{e'}{\bullet} \ar@{-}[rr]&&\ar@{.}[rr]&&\ar@{-}[rr] &  & \underset{w^{-1}}{\bullet} \ar@{.}[uu]\ar@{-}[ruu]\ar@{-}[rrr] &&& \underset{w^{-1}c_1}{\bullet}\ar@{-}[luu]\ar@{.}[uu]\ar@{-}[ruu]\ar@{.}[rrrrr] &&&&& \underset{w^{-1}c_1\cdots c_{k-1}}{\bullet}  \ar@{-}[luu] \ar@{.}[uu] \ar@{-}[ruu]\ar@{-}[rrr] &&& \underset{w^{-1}c}{\bullet}\ar@{-}[rr] &&\ar@{.}[rr]&&\ar@{-}[rr]&&\underset{w^{-1}c w}{\bullet}\rlap{\,.}
  }
 \]
 \caption{Illustration of $(\eta,w^{-1}cw)$}\label{supportNU}
 \end{figure}
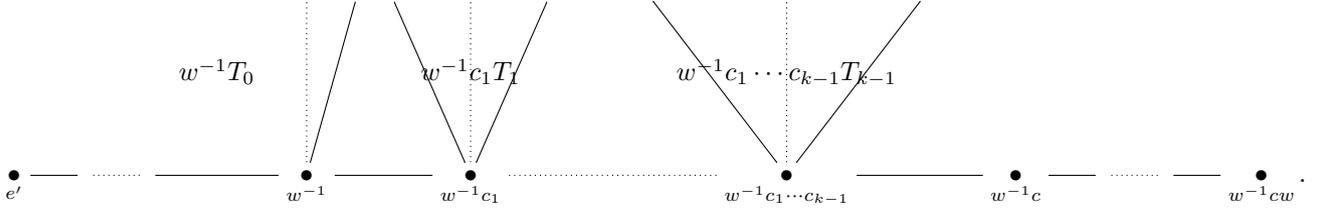
For $r\in\{0,\dots,k-1\}$, let $l_r$ be the length of the walk in $w^{-1}c_1\cdots c_r T_r$. This is 2 times the number of edges in the tree spanned by $\supp(\eta)\cap w^{-1}c_1\cdots c_r T_r$ for $r\in\{1,\ldots,k-1\}$ and at most 2 times the number of edges in the tree spanned by $\supp(\eta)\cap w^{-1}T_0+2n$ for $r=0$. Note that $l_r$ can be equal to 0. Then the length of the walk on $\Gra(L,X)$ is $\sum_{r=0}^{k-1}l_r + 2n+k$.

Now let us conjugate $(\eta,b)$ by $(\vec{e},w)$ to obtain $(\zeta,c)$. In this case $\zeta^{w^{-1}}=\eta$ and hence $\supp(\zeta)=w\supp(\eta)$. Then an optimal walk on $\Gra(L,X)$ starting at the origin, visiting every element of $\supp(\zeta)$ and ending at $c$ has the following trajectory: it first goes inside $T_0$, then comes back to $e'$, then goes to $c_1$, inside $c_1 T_1$, comes back to $c_1$, and so on until it reaches $c_1\cdots c_{k-1}$, walks inside $c_1\cdots c_{k-1}T_{k-1}$, comes back to $c_1\cdots c_{k-1}$ and finally stops at $c$, as shown in Figure \ref{Better}:
\begin{figure}[H]
 \[
  \xymatrix@R=.7cm@C=.4cm{   & & & & & & & & & & & & \\
  & T_0 & & & c_1 T_1 & & & & & c_1\cdots c_{k-1}T_{k-1} & & & \\
   & \underset{e'}{\bullet} \ar@{-}[luu]\ar@{.}[uu]\ar@{-}[ruu]\ar@{-}[rrr] &&& \underset{c_1}{\bullet}\ar@{-}[luu]\ar@{.}[uu]\ar@{-}[ruu]\ar@{.}[rrrrr] &&&&& \underset{c_1\cdots c_{k-1}}{\bullet}  \ar@{-}[luu] \ar@{.}[uu] \ar@{-}[ruu]\ar@{-}[rrr] &&& \underset{c}{\bullet}\rlap{\,.}
  }
 \] 
 \caption{Illustration of $(\zeta,c)$}\label{Better}
\end{figure}
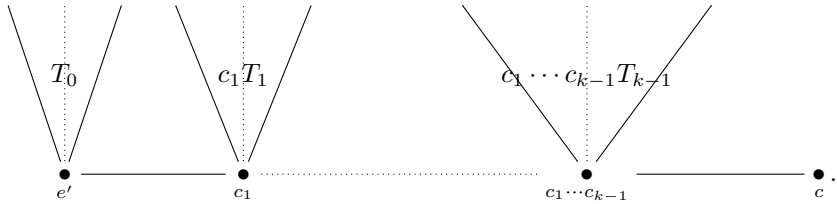

 This is an optimal coset walk for $(\zeta,c)$. Therefore the length of the walk on $\Gra(L,X)$ is less or equal to $\sum_{r=0}^{k-1}l_r +k+2n$. On the other hand, the value $\sum_{v\in\supp(\zeta)}|\zeta(v)|_Y$ is the same as the value $\sum_{v\in\supp(\eta)}|\eta(v)|_Y$. Therefore the length of $(\zeta,c)$ is less or equal to the length of $(\eta,b)$ as claimed.
 \item
  If $b$ is of finite order it can be written as $\overline{w}^{-1}\overline{b_j}\overline{w}$ for some $j\in\{1,\ldots,N\}$, where $w=w_1\cdots w_n$ is in $X^*$ and freely reduced and with $w_1\neq b_j$.\\
 
 Let $T$ be the set of (freely reduced) words in $X$ that do not begin with $b_j$. Then as before we can assume that the optimal walk on $\Gra(L,X)$ starts at the origin, goes inside $w^{-1}T$ and comes back to $w^{-1}$ then goes to $w^{-1}b_j$ and goes inside $w^{-1}b_jT$, comes back to $w^{-1}b_j$ and finally stops at $w^{-1}b_jw$. In other words we assume
 \[
  \supp(\eta)\subset w^{-1}T,
 \]
and that the walk on $\Gra(L,X)$ is as represented in Figure \ref{nonNiceB}:
\begin{figure}[H]
 \[
  \xymatrix@R=.7cm@C=.3cm{ &&&&&  & & &   &&&&&&\\
 &&&&&w^{-1}T &  & & &   & & &&&&\\
 \underset{e'}{\bullet} \ar@{-}[rr]&&\ar@{.}[rr]&&\ar@{-}[rr] &  & \underset{w^{-1}}{\bullet} \ar@{.}[uu]\ar@{-}[ruu]\ar@{-}[rrr] &&&  \underset{w^{-1}b_j}{\bullet}\ar@{-}[rr] &&\ar@{.}[rr]&&\ar@{-}[rr]&&\underset{w^{-1}b_j w}{\bullet}\rlap{\,.}
  }
 \]
 \caption{Illustration of $(\eta,w^{-1}b_jw)$}\label{nonNiceB}
 \end{figure}
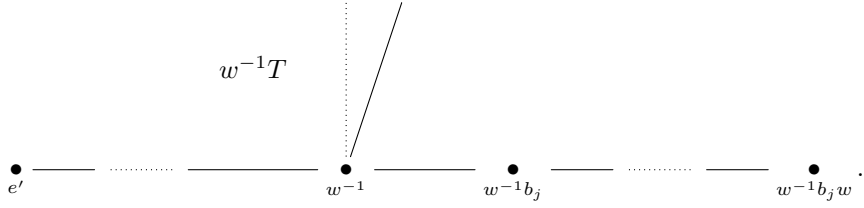
 
 In the same way as before $(\eta,b)$ is conjugate to a $(\zeta,b_j)$ with $\supp(\zeta)=w\supp(\eta)$, as represented in Figure \ref{NiceB};
 \begin{figure}[H]
 \[
  \xymatrix@R=.7cm@C=.4cm{  & & &   \\
  & T & & &   \\
   & \underset{e'}{\bullet} \ar@{-}[luu]\ar@{.}[uu]\ar@{-}[ruu]\ar@{-}[rrr] &&&  \underset{b_j}{\bullet},
  }
 \]
  \caption{Illustration of $(\zeta,b_j)$}\label{NiceB}
 \end{figure}
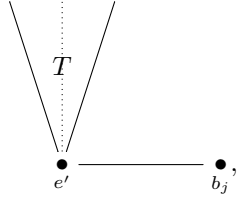
and the length of $(\zeta,b_j)$ is smaller or equal to the length of $(\eta,b)$, since $\sum_{v\in\supp(\zeta)}|[\zeta(v)]|_Y$ is the same as the value $\sum_{v\in\supp(\eta)}|[\eta(v)]|_Y$. Therefore we also find $(\zeta,b_j)$ as claimed.
\end{enumerate}
This proves the lemma. 
\end{proof}

The previous proof also shows the following.
\begin{corollary}\label{equiv_conj_non_triv_curs}
Let $(\eta,c)\in G\wr L$ be an element with $c\neq e'$ and such that $(\eta,c)\in\min([(\eta,c)])_{\vec{Y}}$. Using the same notation as above the following holds.
\begin{enumerate}[A)]
 \item If $c=c_1\cdots c_k$ is cyclically reduced and we choose a conjugacy representative $(\eta,c)$ that satisfies
\[
  \supp(\eta)\subset T_0\cup c_1 T_1 \cup\ldots \cup c_1\cdots c_{k-1} T_{k-1},
 \]
then such an element is uniquely determined up to cyclic permutation of $c$ and the components of $\eta$ in the trees $T_r$. 
\item If $c=b_j$ for some $j\in\{1,\ldots,N\}$ and for every conjugacy class in $G$ we choose a unique conjugacy representative, then there is a unique conjugacy representative $(\eta,b_j)$ that satisfies
\[
  \supp(\eta)\subset T.
 \]
\end{enumerate}
\end{corollary}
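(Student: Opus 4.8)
The plan is to extract the statement of Corollary \ref{equiv_conj_non_triv_curs} directly from the construction carried out in the proof of Lemma \ref{min_conj_length_wr_tree}, supplementing it with a uniqueness argument that the lemma's proof only implicitly contains. First I would recall that, by Lemma \ref{min_conj_length_wr_tree} and Lemma \ref{min_conj_length_wr}, every minimal-length representative of $[(\eta,c)]$ with nontrivial cursor can be taken to have cursor either cyclically reduced or equal to a torsion generator $b_j$, with $\supp(\eta)$ meeting each right coset of $\langle c\rangle$ at most once, and positioned so that an optimal walk on $\Gra(L,X)$ is the ``comb'' depicted in Figures \ref{Better} and \ref{NiceB}. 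So for the existence half of each case it suffices to point out that such a normal-form placement of the support into the pieces $T_0, c_1T_1,\dots, c_1\cdots c_{k-1}T_{k-1}$ (resp. $T$) was already produced in that proof.

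For case A) the remaining work is to pin down exactly the ambiguity. Here I would argue as follows. Lemma \ref{to_conj_anycase} says that two elements with cyclically reduced cursor $c$ are conjugate in $G\wr L$ iff there is $d\in L$ with $d^{-1}cd = c$ (up to relabelling; but since $c$ is cyclically reduced of infinite order, its centralizer in $L$ is $\langle c\rangle$) and the coset products $\pi_{\langle c\rangle t}$ agree after translating by $d$. Combined with the fact that each chosen representative has support a single point in each relevant coset with value exactly $\pi_{\langle c\rangle t}(\eta)$ (infinite-order case of Lemma \ref{min_conj_length_wr}), this forces $d\in\langle c\rangle$, i.e. $d = c^m$ for some $m\in\Z$. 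Translating the comb representative by $c^m$ and re-folding the support back into the region $T_0\cup c_1T_1\cup\cdots\cup c_1\cdots c_{k-1}T_{k-1}$ has the effect of cyclically rotating the word $c_1\cdots c_k$ by $m$ positions and simultaneously rotating the attached trees $T_r$ with their $G$-decorations; conversely any such rotation is realized by some $d=c^m$. That is exactly the claimed ``unique up to cyclic permutation of $c$ and of the components of $\eta$ in the trees $T_r$''. One subtlety to handle cleanly: if $c$ itself is a proper power, say $c = c_0^p$ with $c_0$ cyclically reduced and primitive, then a rotation by $|c_0|$ may send the decorated word to itself, so ``unique up to cyclic permutation'' should be read as ``unique in the orbit under the cyclic rotation action'' and no stronger; I would state it this way to avoid overclaiming.

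For case B) the cursor is the involution $b_j$, so $\langle b_j\rangle = \{e', b_j\}$ has order $2$, and its centralizer in $L$ is $\langle b_j\rangle$ as well (in a free product of copies of $\Z/2\Z$ and $\Z$, the centralizer of a torsion generator is the cyclic group it generates). By the finite-order clause of Lemma \ref{to_conj_anycase}, conjugacy of two elements with cursor $b_j$ requires $d$ with $d^{-1}b_jd=b_j$, hence $d\in\{e',b_j\}$; and the component values are determined only up to conjugacy in $G$, which is why one must first fix a conjugacy representative in $G$ for each $G$-conjugacy class. Once that choice is made, translating by $e'$ or by $b_j$ and re-folding into $T$ gives the same element (translating by $b_j$ just swaps the two halves of the comb, which after re-folding is the identity operation on the normal form), so the representative is unique. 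The main obstacle I anticipate is the bookkeeping in case A): making precise that the only residual freedom after imposing the support-region constraint is the $\langle c\rangle$-translation, and correctly describing its effect as a simultaneous cyclic shift of the letters $c_r$ and the decorated subtrees $T_r$ — in particular getting the indexing of $T_r$ versus $c_r$ right, and being careful about the primitive-versus-imprimitive distinction for $c$.
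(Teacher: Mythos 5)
Your strategy of invoking Lemma \ref{to_conj_anycase} together with a centralizer computation is a reasonable way to make the paper's terse ``the previous proof also shows'' explicit, and your treatment of case~B) is essentially correct: the centralizer of $b_j$ is $\langle b_j\rangle=\{e',b_j\}$, translating by $b_j$ swaps the two halves $T$ and $b_jT$, and because $\pi_{\langle b_j\rangle t}(\eta)=[\eta(t)]$ is unchanged, re-folding into $T$ with the chosen $G$-conjugacy representative yields the same normal form, giving uniqueness.

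Case~A) has a genuine gap in how the cyclic permutations arise. You argue that conjugacy forces $d\in\langle c\rangle$, i.e.\ $d=c^m$, and that ``translating the comb representative by $c^m$ and re-folding has the effect of cyclically rotating the word $c_1\cdots c_k$ by $m$ positions.'' This is not what happens: $d=c^m$ leaves the cursor $c$ fixed and shifts the support by a full multiple of $|c|=k$ positions, which is exactly a translation by one or more full periods of the comb, so after re-folding it is the \emph{identity} on the normal form, not a rotation by $m$. The genuine cyclic rotations come instead from conjugators $d=c_1\cdots c_i$ with $1\le i\le k-1$, for which $d^{-1}cd=c_{i+1}\cdots c_kc_1\cdots c_i$ is a \emph{different} cyclically reduced word and the trees $T_r$ permute accordingly; your argument only treats the case $d^{-1}cd=c$, so the nontrivial rotations never appear in it. Relatedly, the claim that the centralizer of a cyclically reduced infinite-order $c$ in $L$ is $\langle c\rangle$ is false when $c=c_0^p$ is a proper power (it is then $\langle c_0\rangle$), and the caveat you add at the end does not repair the earlier step that relies on the false claim. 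To fix case~A) you would need to (i) observe that two comb-form representatives with cyclically reduced cursors $c,c'$ can only be conjugate if $c'$ is a cyclic permutation $c^{(i)}$ of $c$, (ii) check that conjugation by $c_1\cdots c_i$ sends the comb region for $c$ bijectively to the comb region for $c^{(i)}$ after a full-period re-fold, and (iii) handle the residual centralizer $\langle c_0\rangle$ when $c$ is imprimitive, noting these elements act by the period-$|c_0|$ rotations that stabilize the word $c$.
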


Now one needs an analogue to Lemma  \ref{min_conj_length_wr_tree} for elements having trivial cursor position.
\begin{lemma}\label{min_conj_length_wr_triv_curs}%\label{equiv_conj_triv_curs}
 Let $(\eta,e')\in G\wr L$ be non-trivial and such that $(\eta,e')\in\min([(\eta,e')])_{\vec{Y}}$. Then the finite tree in $\Gra(L,X)$ spanned by $\supp(\eta)$ contains the vertex $e'$.
 
 Moreover, two elements $(\eta,e')\in\min([(\eta,e')])_{\vec{Y}}$ and $(\eta',e')\in\min([(\eta',e')])_{\vec{Y}}$ are conjugate if and only if there exists $d\in L$ such that for every $l\in L$, $\eta'(l)$ and $\eta^d(l)$ are conjugate in $G$. 
\end{lemma}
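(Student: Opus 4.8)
The plan is to prove the two assertions separately, both relying on Lemma~\ref{to_conj_anycase} and Corollary~\ref{conj_rep_supp_singl}.

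For the first assertion, suppose for contradiction that $(\eta,e')$ is of minimal length in its conjugacy class but the finite subtree $\T_0$ of $\Gra(L,X)$ spanned by $\supp(\eta)$ does not contain $e'$. Since $b=e'$ has finite order $1$, each right coset $\langle b\rangle t$ is the singleton $\{t\}$, so $\pi_{\langle b\rangle t}(\eta)=[\eta(t)]$ and the conjugacy condition in Lemma~\ref{to_conj_anycase} just says that $(\eta,e')$ and $(\eta',e')$ are conjugate iff there is $d\in L$ with $\eta'(l)$ conjugate in $G$ to $\eta^d(l)=\eta(d^{-1}l)$ for all $l$; in particular conjugating by $(\vec e,d)$ translates $\supp(\eta)$ to $d\supp(\eta)$. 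By Proposition~\ref{length_in_wr_Mann}, the length of $(\eta,e')$ is $\sum_{v}|\eta(v)|_Y$ plus the length of a minimal closed walk from $e'$ visiting every vertex of $\supp(\eta)$; in a tree this walk length equals $2\cdot\#\{\text{edges of the subtree spanned by }\supp(\eta)\cup\{e'\}\}$. If $e'\notin\T_0$, then the subtree spanned by $\supp(\eta)\cup\{e'\}$ strictly contains $\T_0$ (it must include the nontrivial geodesic from $e'$ to $\T_0$), so the walk has length at least $2+2\cdot\#\{\text{edges of }\T_0\}$. Now pick $d\in L$ so that $d\cdot(\text{the vertex of }\T_0\text{ nearest }e')$ equals $e'$, i.e. translate $\T_0$ so that it touches the origin; the translated element $(\eta^{d^{-1}},e')$ (obtained by conjugating, hence of equal or smaller conjugacy length) has the same $\sum_v|\eta(v)|_Y$ but now a minimal closed walk of length $2\cdot\#\{\text{edges of }\T_0\}$, strictly shorter, contradicting minimality. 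Hence $e'\in\T_0$.

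For the second assertion, the ``if'' direction is immediate from Lemma~\ref{to_conj_anycase}: since $b=e'$ has finite order, the hypothesis that $\eta'(l)$ is conjugate in $G$ to $\eta^d(l)$ for all $l$ is exactly the statement that $\pi_{\langle e'\rangle l}(\eta')=\pi_{\langle e'\rangle l}(\eta^d)$ for every coset $\langle e'\rangle l=\{l\}$, rewritten using $\pi_{\langle e'\rangle l}(\eta'^{d^{-1}})=[\eta'(d^{-1}l)]$; one takes $d^{-1}$ in place of $d$ in the lemma (or reindexes). Conversely, if $(\eta,e')$ and $(\eta',e')$ are conjugate, Lemma~\ref{to_conj_anycase} hands us $d\in L$ with $e'=d^{-1}e'd$ (automatic) and $\pi_{\langle e'\rangle l}(\eta)=\pi_{\langle e'\rangle l}(\eta'^d)$ for all $l$, i.e. $[\eta(l)]=[\eta'(d^{-1}l)]$ for all $l$; setting $d^{-1}$ as the required translation element gives that $\eta'(m)$ and $\eta^{d^{-1}}(m)=\eta(dm)$ are conjugate in $G$ for all $m$, which is the desired form after renaming.

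The only mildly delicate point is bookkeeping the direction of the translation ($d$ versus $d^{-1}$) to match the convention $\eta^x(i)=\eta(x^{-1}i)$ from the Notations, and making sure the minimal-walk-in-a-tree formula is invoked correctly — namely that for a trivial cursor the minimal walk is a closed walk based at $e'$, whose length in a tree is exactly twice the edge count of the spanned subtree including the basepoint. I expect the main (though still routine) obstacle to be justifying cleanly that translating the spanned subtree to touch the origin strictly decreases the walk length whenever $e'$ was not already in it; everything else follows formally from the cited results.
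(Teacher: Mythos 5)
Your argument is correct and matches the paper's proof, which simply declares the lemma an immediate consequence of Lemma \ref{min_conj_length_wr}; you have in effect unpacked that reference by going directly to Lemma \ref{to_conj_anycase}, Corollary \ref{conj_rep_supp_singl}, and Proposition \ref{length_in_wr_Mann} specialized to $b=e'$. The one concrete slip is exactly the translation-direction point you flag: if $v_0$ is the vertex of the spanned tree nearest to $e'$ and you take $d$ with $dv_0=e'$ (so $d=v_0^{-1}$), then $(\vec e,d)^{-1}(\eta,e')(\vec e,d)=(\eta^{d^{-1}},e')$ has support $d^{-1}\supp(\eta)=v_0\,\supp(\eta)$, whose spanned tree contains $e'$ only if $v_0^{-1}$ lies in the original tree, which need not hold; the correct choice is $d=v_0$, which gives support $v_0^{-1}\supp(\eta)$ and $e'=v_0^{-1}v_0$ in the translated tree. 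With that correction, together with the analogous $d\leftrightarrow d^{-1}$ reindexing you already note in the second assertion, the proof goes through.
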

\begin{proof}
 This is an immediate consequence of Lemma \ref{min_conj_length_wr}.
\end{proof}

\section{The conjugacy growth series when the Cayley graph of $L$ is a tree}\label{Sec_compu_wr_tree}
In this section, we compute the contribution to $\cgs_{(G\wr L,\vec{Y})}^A(z)$ of the elements $(\eta,c)$, with $|c|_X=k$, and the contribution to $\cgs_{(G\wr L,\vec{Y})}^B(z)$ of the elements $(\eta,c)$, with $|c|_X=1$ or $0$.

For the conjugacy classes of elements $(\eta,c)$, with $|c|_X=k>0$, we consider a cyclically reduced element $c=c_1\cdots c_k\in L$, and (with the notation in the proof of Lemma \ref{min_conj_length_wr_tree}) we count the contribution of all the possible $\eta$ satisfying the condition
\[
  \supp(\eta)\subset T_0\cup c_1 T_1 \cup\ldots \cup c_1\cdots c_{k-1} T_{k-1}.
 \]
 To do this we must consider the contribution of the possible walks along the subtrees $T_0,\, c_1 T_1, \,\ldots ,$ $\,c_1\cdots c_{k-1} T_{k-1}$. We will follow the approach of Parry from \cite{P_92}, where he computed the standard growth series of $G\wr L$, and use his notation. 
 \begin{notation}
  The intersection of the trajectory of the walk with each of the trees $T_0,\, c_1 T_1, \,\ldots ,\,c_1\cdots c_{k-1} T_{k-1}$ is a finite tree $T$. For such a tree $T$ define a vertex to be a leaf if it has valence 1 or 0 (when the tree contains just one vertex).
  Choose $v\in X\setminus\{c_1,c_k^{-1}\}$. Let $\T$ be the set of all finite subtrees containing $e'$ but no other vertex adjacent to $e'$ than $v$, and define
  \[
F_\T(x,y)=\sum_{i,j\geq 0}a_{ij}x^{i}y^j   
  \]
to be the formal complex power series on $2$ variables with
\[
 a_{ij}:=\card\{T\in\T\,:\; T\textup{ contains }i\textup{ non-leaves and }j\textup{ leaves other than } e'\}.
\]
 \end{notation}
Parry observed that $F_\T(x,y)$ does not depend on the edge $\{e',v\}$, but only on the degree of the regular tree (in this case $2M+N$). 
\begin{lemma}[\cite{P_92}:Lemma 3.1]
 If the degree of the tree is $D$, then $F_\T(x,y)$ satisfies
 \[
  F_\T(x,y)=1+y-x+xF_\T(x,y)^{D-1}.
 \]
\end{lemma}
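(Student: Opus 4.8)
The plan is to derive the functional equation by a direct recursive decomposition of the finite rooted trees $T\in\T$ according to what happens above the root $e'$. Recall that $\T$ consists of finite subtrees containing $e'$ whose only neighbour of $e'$ inside $T$ is the distinguished vertex $v$. I will root every such tree at $e'$ and then look at the vertex $v$: deleting the edge $\{e',v\}$ leaves $v$ as the root of a finite subtree of the regular tree of degree $D$ in which $v$ has at most $D-1$ further neighbours (all pointing "away" from $e'$), and each such neighbour, if present, is again the root of a tree of exactly the same combinatorial type. This is precisely the self-similar structure that produces a polynomial relation of degree $D-1$.

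The key steps, in order, are as follows. First I would pin down the bookkeeping: in a tree $T\in\T$ the root $e'$ is never counted (we only count non-leaves via $x$ and leaves other than $e'$ via $y$), and $v$ is a non-leaf of $T$ if and only if it has at least one child, i.e. the deletion of $\{e',v\}$ leaves $v$ with positive valence. Second, I would split $\T$ into two cases. Case (a): $T$ is the single edge $\{e',v\}$, so $v$ is a leaf and $T$ contributes $y^1x^0=y$. Case (b): $v$ has $s$ children with $1\le s\le D-1$; then $v$ is a non-leaf, contributing a factor $x$, and the $s$ subtrees hanging off $v$ are, after relabelling, arbitrary elements of $\T$ again (each has its own root which is not counted, matching the fact that $v$'s children are counted inside those subtrees). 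Third, I would assemble the generating function: summing over unordered choices of children is what forces the use of $F_\T^{D-1}$ once one accounts correctly for which of the $D-1$ slots are occupied — more precisely, each of the $D-1$ potential children either is absent (factor $1$) or is the root of a tree in $\T$ (factor $F_\T(x,y)$, but with the convention that an absent child contributes $1$ and the root-vertex of a present child is already tallied), giving $\bigl(1 + (F_\T(x,y)-1)\bigr)^{D-1}\cdot$(something). After simplifying, case (b) contributes $x\,F_\T(x,y)^{D-1}$ and one must subtract off the overcount corresponding to $s=0$, which is exactly the term $-x$ in the stated formula, while case (a) and the empty-above-$v$ possibilities contribute $1+y$. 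Collecting, $F_\T(x,y)=1+y-x+x\,F_\T(x,y)^{D-1}$.

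The main obstacle I anticipate is getting the leaf/non-leaf accounting exactly right at the root and at $v$, together with the correct treatment of the "$s=0$" boundary case. The subtlety is that $v$ changes status from leaf to non-leaf precisely as its child-count crosses from $0$ to $1$, and the naive substitution "$v$'s subtree structure $\leftrightarrow F_\T^{D-1}$" silently includes the configuration where $v$ has no children — which in $\T$-terms is the single edge $\{e',v\}$ with $v$ a \emph{leaf}, already accounted for by the $y$ term, not the $x$ term. Reconciling these two descriptions of the same configuration is exactly what produces the $-x$ correction; I would make this precise by writing $F_\T^{D-1} = 1 + (\text{terms with at least one child of }v)$, multiplying the "at least one child" part by $x$, and checking that the leftover constant $1$ inside $F_\T^{D-1}$ would wrongly be weighted by $x$ and must be removed, i.e. $x\,F_\T^{D-1} = x + x(\text{at least one child})$, so the genuine contribution of case (b) is $x\,F_\T^{D-1} - x$. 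Everything else — that the subtrees hanging off $v$ range freely and independently over $\T$, using the regularity of the tree and Parry's observation that the answer depends only on $D$ — is routine.
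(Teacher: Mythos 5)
The paper does not prove this lemma itself; it simply cites Parry's Lemma~3.1. So there is no proof in the source to compare against, and I evaluate your argument on its own merits.

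Your recursive decomposition is correct and is the natural one. Deleting the edge $\{e',v\}$ and looking at the $D-1$ edges out of $v$ away from $e'$ produces $D-1$ independent copies of the same combinatorial problem: each such direction carries a subtree which, after recentring the root at $v$, is again a member of $\T$ (including the possibility of the one-vertex tree $\{v\}$, which is the analogue of $\{e'\}\in\T$). The leaf/non-leaf bookkeeping is additive across the pieces because every vertex other than $e'$ and $v$ has the same valence in $T$ as in the unique piece containing it, and $v$'s valence in $T$ is $1$ plus the number of non-trivial pieces. The slot-product $F_\T(x,y)^{D-1}$ then silently includes the all-empty configuration, which as a member of $\T$ is the single edge $\{e',v\}$ with $v$ a leaf, not a non-leaf; weighting it by $x$ over-counts it, so one subtracts $x$ and adds the correct weight $y$, while the lone $1$ is the contribution of the tree $\{e'\}$. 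Collecting gives $1+y-x+xF_\T(x,y)^{D-1}$, as required; the identity also specialises correctly to $D=1$ and $D=2$.

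Two small points of phrasing to tighten. First, the clause \emph{``summing over unordered choices of children is what forces the use of $F_\T^{D-1}$''} has the logic backwards: the $D-1$ outgoing edges at $v$ are \emph{distinguishable} positions in the ambient $D$-regular tree, so the slots are ordered, and that is precisely why the plain $(D-1)$-fold product works with no symmetry correction. Second, the parenthetical manipulation $\bigl(1+(F_\T-1)\bigr)^{D-1}\cdot(\text{something})$ is a detour you can delete; the clean statement is simply that case~(b) contributes $x\bigl(F_\T(x,y)^{D-1}-1\bigr)$, where the subtracted $1$ removes the configuration with all $D-1$ slots empty, and that together with the base cases $\{e'\}$ and $\{e',v\}$ this is the whole of $\T$.
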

In particular, if $D=1$ then $F_\T(x,y)=1+y$, and if $D=2$ then $F_\T(x,y)=1+y\frac{1}{1-x}$.
He first computes the contribution to the growth series of a walk starting at the origin with support in $T$, for a $T\in\T$, creating the non-trivial elements of $G$ at the leaves of $T$ and the other elements of $G$ in the non-leaves of $T$ and then coming back to the origin.

Let the group $L$ have presentation $\langle a_1,\ldots, a_M,b_1,\ldots,b_N\,|\,b_1^2,\ldots,b_N ^2\rangle$ and symmetric generating set $X=\{a_1,a_1 ^{-1},\ldots,a_M,a_M^{-1},b_1,\ldots,b_N\}$ and
assume the valence of the tree $\Gra(L,X)$ is at least 2, in other words $2M+N\geq 2$.
We keep the notation in the proof of Lemma \ref{min_conj_length_wr_tree} and Section \ref{About_conj_lenght}, and let $F_\T(x,y)$ refer to the $2$-variable complex power series with natural number coefficients that satisfies 
\[
  F_\T(x,y)=1+y-x+xF_\T(x,y)^{2M+N-1}.
 \]

\subsection{Contribution of conjugacy classes with cursor of infinite order}\label{Sec_cgs_A} 
 The goal of this section is to prove Proposition \ref{Lem_cgs_A}. Recall that $\cgs^A _{(G\wr L,\vec{Y})}(z)$ denotes the contribution of the conjugacy classes of type A (i.e. with cursors of infinite order) to the conjugacy growth series of $G\wr L$.
\begin{proposition}\label{Lem_cgs_A}
The formula for $\cgs^A _{(G\wr L,\vec{Y})}(z)$ is
\begin{equation}
 \cgs^A _{(G\wr L,\vec{Y})}(z)=\sum_{r\geq 1}\frac{\phi(r)}{r}\sum_{s\geq 1}\frac{(2M+N-1)^s+(-1)^s(M+N-1)+M}{s}{(F_E(z^r))}^s, \label{cgs_A}
\end{equation}
where $\phi(r)$ denotes Euler's totient function and where 
\[
 F_E(z):=z\sgs_{(G,Y)}(z)\left(F_\T(z^2\sgs_{(G,Y)}(z),z^2(\sgs_{(G,Y)}(z)-1))\right)^{2M+N-2}.
\]
\end{proposition}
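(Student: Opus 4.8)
The plan is to enumerate conjugacy classes of type $A$ by first fixing the cyclically reduced cursor $c = c_1\cdots c_k$ (up to cyclic permutation), then counting the admissible $\eta$'s weighted by the conjugacy length, using Lemma \ref{min_conj_length_wr_tree} and Corollary \ref{equiv_conj_non_triv_curs} to pin down a canonical form. By those results every type $A$ conjugacy class has a representative $(\eta,c)$ with $c$ cyclically reduced and $\supp(\eta)\subset T_0\cup c_1T_1\cup\cdots\cup c_1\cdots c_{k-1}T_{k-1}$, and such a representative is unique up to the simultaneous cyclic action of $\Z/k\Z$ (equivalently $\langle c\rangle$ acting by conjugation/translation) on the pair (cyclic word $c$, tuple of decorated subtrees $(T_0,\dots,T_{k-1})$). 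So the generating function we want is, informally,
\[
\cgs^A_{(G\wr L,\vec Y)}(z)=\sum_{k\geq 1}\ \frac1{(\text{orbit count for }\Z/k\Z)}\ \big(\text{length-weighted count of }(c,\eta)\big),
\]
which is exactly the combinatorial shape of the cycle-index / necklace-counting formula that produces the $\sum_{r\mid k}\phi(r)$ factors.

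First I would compute the contribution of a single decorated subtree $c_1\cdots c_r T_r$ hanging off the cursor path. Here I invoke Parry's machinery: the walk enters $T_r$ from a fixed edge, the valence available inside $T_r$ is $2M+N-1$ on the root vertex and $2M+N-1$ elsewhere but with one direction used up returning — more precisely, as in \cite{P_92}, the vertex on the cursor path where the branch attaches contributes a factor $F_\T$ raised to the power $2M+N-2$ (two directions along the cursor path are unavailable). At each vertex of $T_r$ we place an element of $G$: at a leaf a non-trivial element (growth series $\sgs_{(G,Y)}(z)-1$), at a non-leaf any element (growth series $\sgs_{(G,Y)}(z)$), and each edge of $T_r$ is traversed twice (factor $z^2$ per edge, so $z^2$ accompanies each leaf-or-nonleaf beyond the root in the $F_\T$ substitution). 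Tracking the variable bookkeeping $x\mapsto z^2\sgs_{(G,Y)}(z)$ for non-leaves and $y\mapsto z^2(\sgs_{(G,Y)}(z)-1)$ for leaves, and multiplying by the $z\,\sgs_{(G,Y)}(z)$ for the single cursor edge $c_i$ together with the element placed at that path vertex, yields exactly $F_E(z)$ as the per-letter contribution. Then a cursor path of length $k$ contributes $F_E(z)^k$ before we quotient by the cyclic symmetry.

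Next I would assemble the outer sums. The number of cyclically reduced words $c$ of length $k$ in $L$ is given by Lemma \ref{cyl_red_A}, namely $(2M+N-1)^k+(-1)^k(M+N-1)+M$; call it $c_k$. Counting conjugacy classes of type $A$ with cursor length $k$ amounts to counting orbits of the $\Z/k\Z$-action on (cyclically reduced word of length $k$, decoration tuple), where the decoration-tuple part is counted by $F_E(z)^k$. Standard Burnside/necklace bookkeeping: an orbit with stabilizer of order $d\mid k$ corresponds to a primitive word of length $k/d$ repeated $d$ times, with a $d$-periodic decoration; summing $\frac1k\sum_{d\mid k}$ over the fixed-point contributions, and then collecting by $s=k/d$ and $r=d$ (or the reverse), reorganizes the double sum into
\[
\sum_{r\geq 1}\frac{\phi(r)}{r}\sum_{s\geq 1}\frac{c_s}{s}\,F_E(z^r)^s,
\]
where the substitution $z\mapsto z^r$ reflects that an $r$-fold periodic decoration on a length-$rs$ path is a decoration on a length-$s$ path with all lengths multiplied by $r$, and $\phi(r)/r$ is the usual necklace weight. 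This is precisely \eqref{cgs_A}.

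The main obstacle — and the step I would write out most carefully — is the symmetry/counting argument: verifying that the only identifications among canonical forms $(c,\eta)$ with support in $T_0\cup c_1T_1\cup\cdots$ are the cyclic ones (this is exactly the content of Corollary \ref{equiv_conj_non_triv_curs}(A), so it reduces to citing it correctly), and then that the resulting orbit count under $\Z/k\Z$, when made length-aware, produces the $\sum_r \phi(r)/r \cdot (\,\cdot\,)(z^r)$ shape rather than a naive $\frac1k$. Concretely one must be careful that a decoration tuple fixed by the rotation by $k/r$ is genuinely an $r$-periodic pattern and hence counted by $F_E(z^r)^{k/r}$ with the correct length rescaling, and that the cyclically reduced words fixed by that rotation are exactly the $r$-th powers of primitive cyclically reduced words of length $k/r$ — so that their count is the same $c_{k/r}$ appearing in Lemma \ref{cyl_red_A}. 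Once that periodicity dictionary is in place, the rearrangement of the double sum into the stated closed form is routine.
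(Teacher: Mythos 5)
Your argument follows the paper's own proof essentially step for step: you use Corollary \ref{equiv_conj_non_triv_curs}(A) to identify the canonical representatives $(\eta,c)$ with $c$ cyclically reduced and supports confined to $T_0\cup c_1T_1\cup\cdots$, you derive $F_E(z)$ as the per-letter elementary-block series exactly as the paper does when defining the sets $E_{x_1,x_2}$, you multiply by the count of cyclically reduced words from Lemma \ref{cyl_red_A}, and you mod out by the $C_k$-action via Burnside and reindex the divisor sum to produce the $\sum_r\phi(r)/r$ factor with $z\mapsto z^r$. The only quibble is cosmetic: in the sentence about rotation by $k/r$ you say ``$r$-periodic pattern'' where the fundamental period is really $k/r$ (the block is repeated $r$ times), but the generating function $F_E(z^r)^{k/r}$ you write down is the correct one, so this is a terminology slip rather than a gap.
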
 
We construct a conjugacy geodesic normal form of type A as follows. For every cyclically reduced word $c$ of length $k$ we associate a geodesic normal form representing all the elements $(\eta,c)$ where the support of $\eta$ is represented in Figure \ref{NiceA2}
(see Corollary \ref{equiv_conj_non_triv_curs} A):
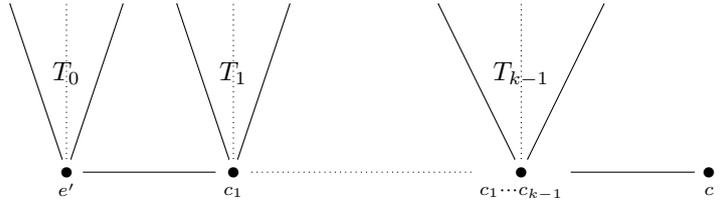
\begin{figure}[H]
\[
  \xymatrix@R=.7cm@C=.4cm{   & & & & & & & & & & & & \\
  & T_0 & & & T_1 & & & & & T_{k-1} & & & \\
   & \underset{e'}{\bullet} \ar@{-}[luu]\ar@{.}[uu]\ar@{-}[ruu]\ar@{-}[rrr] &&& \underset{c_1}{\bullet}\ar@{-}[luu]\ar@{.}[uu]\ar@{-}[ruu]\ar@{.}[rrrrr] &&&&& \underset{c_1\cdots c_{k-1}}{\bullet}  \ar@{-}[luu] \ar@{.}[uu] \ar@{-}[ruu]\ar@{-}[rrr] &&& \underset{c}{\bullet}\rlap{\,.}
  }
 \]
 \caption{Illustration of $(\eta,c)$}\label{NiceA2}
 \end{figure}
For a given $c$ of length $k$, each such geodesic normal form can be seen as the concatenation of {\em elementary blocks} $(\eta_r,c_{r+1})$, where $\supp(\eta_r)\subset T_r$, for $r\in\{0,\ldots,k-1\}$, as represented in Figure \ref{elblockA}. For such a concatenation, each vertex of each $T_r$ will be in a different coset of $\langle c\rangle$. This is why any value of $G$ can be associated to this vertex.
\begin{figure}[H]
 \[
 \xymatrix@R=.7cm@C=.4cm{ & & & & \\
  & T_r & & &\\
  &\underset{e'}{\bullet} \ar@{-}[luu]\ar@{.}[uu]\ar@{-}[ruu]\ar@{-}[rrr] & & &\underset{c_{r+1}}{\bullet}\rlap{.}}
\]
\caption{Illustration of $(\eta_r,c_{r+1})$}\label{elblockA}
\end{figure}
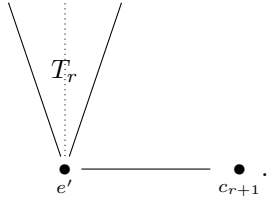

For a given $c_{r+1}$, the growth series of the geodesic normal forms representing the set of elementary blocks $\supp(\eta_r)\subset T_r$ does not depend on $r$ but only on the degree of the tree and on the growth series of $G$. This leads us to introduce the following.

\begin{notation}
For $x_1,x_2\in X$ such that $x_1\neq x_2$, we define
\begin{equation}
 E_{x_1,x_2}=\left\{(\eta,x_2)\in G\wr L\,:\,\supp(\eta)\subset\{ \textup{ elements in } L \textup{ that do not begin with } x_1 \textup{ or } x_2\}\right\}.\label{El_bloc}
\end{equation} 
\end{notation}
\begin{lemma}
 The contribution to the standard growth series of the elements of $E_{x_1,x_2}$ is given by
 \begin{equation}
 F_E(z):=z\sgs_{(G,Y)}(z)\left(F_\T(z^2\sgs_{(G,Y)}(z),z^2(\sgs_{(G,Y)}(z)-1))\right)^{2M+N-2}.\label{growth_el_bloc}
\end{equation}
\end{lemma}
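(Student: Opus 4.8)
The plan is to compute the growth series of the language of geodesic normal forms for the elements of $E_{x_1,x_2}$ by decomposing such an element according to how its support and the cursor walk sit in the regular tree. Fix $x_1\neq x_2$ in $X$. An element $(\eta,x_2)\in E_{x_1,x_2}$ has $\supp(\eta)$ contained in the set of vertices $v$ whose reduced word does not begin with $x_1$ or $x_2$; equivalently, removing from $\Gra(L,X)$ the edge $\{e',x_1\}$ and the edge $\{e',x_2\}$, the component of $e'$ carries $\supp(\eta)\cup\{e'\}$. The optimal coset walk for the elementary block starts at $e'$, must visit every vertex of $\supp(\eta)$, and ends at $x_2$. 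Since we only allow moves into the allowed subtree and then the final edge to $x_2$, the walk splits as: a closed walk based at $e'$ visiting $\supp(\eta)$ inside the allowed subtree, followed by the single edge $e'\to x_2$. The allowed subtree at $e'$, minus the two forbidden edges, is a union of $2M+N-2$ copies of the rooted regular tree of the type counted by $F_\T$ (one for each generator direction other than $x_1^{\pm}$-toward-$x_1$ and $x_2$), together with the vertex $e'$ itself where an arbitrary element of $G$ may be placed.

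The key computation is the contribution of a single finite subtree $T\in\T$ hanging off $e'$ in one of these $2M+N-2$ directions. Following Parry's bookkeeping (the paragraph preceding the Lemma), a closed walk based at the neighbour $v$ of $e'$, visiting all of $T$ and returning, has length equal to twice the number of edges of $T$; each edge contributes a factor $z^2$. At a non-leaf vertex of $T$ we may place \emph{any} element of $G$ (including $e$), contributing $\sgs_{(G,Y)}(z)$; at a leaf of $T$ other than the root we must place a \emph{non-trivial} element of $G$, contributing $\sgs_{(G,Y)}(z)-1$. Recording a non-leaf with the variable $x$ and a leaf with $y$, the generating function for one such subtree is exactly $F_\T\big(z^2\sgs_{(G,Y)}(z),\,z^2(\sgs_{(G,Y)}(z)-1)\big)$; since the substitution must also account for the $z^2$ per edge, this is precisely Parry's substitution. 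Because the $2M+N-2$ directions are independent, their joint contribution is the $(2M+N-2)$-th power of this series. Finally the vertex $e'$ itself carries an arbitrary element of $G$, giving another factor $\sgs_{(G,Y)}(z)$, and the final edge $e'\to x_2$ of the walk contributes one factor $z$. Multiplying these gives
\[
 F_E(z)=z\,\sgs_{(G,Y)}(z)\,\Big(F_\T\big(z^2\sgs_{(G,Y)}(z),\,z^2(\sgs_{(G,Y)}(z)-1)\big)\Big)^{2M+N-2},
\]
as claimed.

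To make this rigorous one picks, inside each conjugacy-geodesic word for $(\eta,x_2)$, the geodesic normal form dictated by Proposition \ref{length_in_wr_Mann}: write the optimal coset walk explicitly as the depth-first traversal of the spanned subtree of $\supp(\eta)$ within the allowed region, inserting at each vertex the chosen geodesic normal form word of $G$ for the relevant component, and append the letter $x_2$ at the end. One then checks that (i) the resulting word is a geodesic for $(\eta,x_2)$ — its length is the walk length plus $\sum_v|\eta(v)|_Y$, matching Proposition \ref{length_in_wr_Mann}; (ii) distinct $(\eta,x_2)$ yield distinct words, and the depth-first convention makes the encoding a bijection onto the described language of trees-with-$G$-labels. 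The length bookkeeping is then literally the monomial bookkeeping above, so summing over all finite subtrees and all $G$-labelings gives the displayed product.

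\textbf{Main obstacle.} The only delicate point is the leaf/non-leaf dichotomy: one must verify that in an optimal walk the subtree actually traversed is exactly the subtree \emph{spanned} by $\supp(\eta)$ (no "extra" vertices are visited), so that a pendant vertex of that spanned subtree is forced to lie in $\supp(\eta)$ and hence must receive a non-trivial element of $G$ — this is what produces the $\sgs_{(G,Y)}(z)-1$ at leaves rather than $\sgs_{(G,Y)}(z)$. This is exactly the content of Parry's setup, so the work is to confirm that his argument applies verbatim to the "elementary block" region $\{v\in L: v\text{ does not begin with }x_1\text{ or }x_2\}$, which is itself a rooted subtree of the $(2M+N)$-regular tree with $2M+N-2$ branching directions at the root; everything else is routine.
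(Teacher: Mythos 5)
Your proof is correct and follows essentially the same approach as the paper's (Parry-style bookkeeping, decomposing the elementary block walk into visits in the $2M+N-2$ allowed directions, substituting $x\mapsto z^2\sgs_{(G,Y)}(z)$ and $y\mapsto z^2(\sgs_{(G,Y)}(z)-1)$ into $F_\T$, then multiplying by $\sgs_{(G,Y)}(z)$ for the entry at $e'$ and $z$ for the final step to $x_2$). You in fact correct a small typo in the paper's own argument (which writes "ends at $x_1$" although the cursor of an elementary block is $x_2$) and give a little more justification of the bijective encoding than the paper does.
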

\begin{proof}
 Let us compute the contribution of a walk that starts at $e'$, goes along all directions except $x_1$ and $x_2$, comes back to $e'$ and ends at $x_1$. Consider one direction $x$ among the $2M+N-2$. If the trajectory of this walk is a tree with $i$ non-leaves and $j$ leaves (other than $e'$), then the length of this walk on $\Gra(L,X)$ is $2(i+j)$, hence $z^{2(i+j)}$ is a factor of the series. On the other hand, any element in $G$ can be associated to a non-leaf, while any non-trivial element in $G$ can be associated to a leaf (different from $e'$). Therefore $\sgs_{(G,Y)}(z)^i$ and ${(\sgs_{(G,Y)}(z)-1)}^j$ are also factors of the series. Considering all the possible walks in direction $x$ gives $F_\T(z^2\sgs_{(G,Y)}(z),z^2(\sgs_{(G,Y)}(z)-1))$. Now considering all the $2M+N-2$ possible choices for $x$, we find $\left(F_\T(z^2\sgs_{(G,Y)}(z),z^2(\sgs_{(G,Y)}(z)-1))\right)^{2M+N-2}$. Finally, the value of $G$ at $e'$ is arbitrary, so $\sgs_{(G,Y)}(z)$ appears again, and the walk from $e'$ to $x_1$ produces the factor $z$. 
\end{proof}

Now, considering the language of all such geodesic normal forms for the cyclically reduced words of length $k$ will give rise to a new language; within this language we define two words to be equivalent if and only if they differ by a cyclic permutation of the $k$ elementary blocks. Then the conjugacy geodesic normal forms of type A will be the union, over all $k\in \N$, of the languages above, up to the cyclic permutation equivalence.
So let us define
\begin{equation}
 \conjrep^A(G\wr L):=\bigcup_{\stackrel{c_1\cdots c_k\in L,\,k\geq 1}{\textup{cyclically reduced}}} E_{c_k^{-1},c_1} E_{c_1^{-1},c_2}\cdots E_{c_{k-1}^{-1},c_k},\label{Conc_el_bloc}
\end{equation}
and
\begin{equation}
 {\conjrep_k} ^A(G\wr L):=\bigcup_{\stackrel{c_1\cdots c_k\in L}{\textup{cyclically reduced}}} E_{c_k^{-1},c_1} E_{c_1^{-1},c_2}\cdots E_{c_{k-1}^{-1},c_k}.\label{Conc_k_el_bloc}
\end{equation}
It follows that the contribution to the standard growth series $\sgs_{(G\wr L,\vec{Y})}(z)$ of the elements in $\conjrep^A(G\wr L)$ is
\begin{align*}
 F_{\conjrep^A(G\wr L) }(z)=&\,F_{\geocl(L,X)^A}(F_E(z)).\\
 %=&\frac{1}{1-(2M+N-1)F_E(z)}+\frac{(2M+N)F_E(z)^2-(N-1)F_E(z)-1}{1-F_E(z)^2}.
\end{align*}
Using Lemma \ref{cyl_red_A} one deduces that
\begin{equation}
 F_{{\conjrep_k} ^A(G\wr L)}(z)=\left((2M+N-1)^k+(-1)^k(M+N-1)+M\right)(F_E(z))^k\label{F_conj_rep_k},
\end{equation}
and
\[
 F_{\conjrep^A(G\wr L) }(z)=\sum_{k\geq 1}F_{{\conjrep_k} ^A(G\wr L)}(z).
\]

\begin{proof}[Proof of Proposition \ref{Lem_cgs_A}]
 \begin{comment}
 Consider the elements $(\eta,c)$, with $c=c_1\cdots c_k$ freely reduced of infinite order, and where $\eta$ satisfies the condition 
 \[
  \supp(\eta)\subset T_0\cup c_1 T_1 \cup\ldots \cup c_1\cdots c_{k-1} T_{k-1}.
 \]
 Recall that for $r=\{0,\ldots,k-1\}$, $T_r$ is the set of (freely reduced) words in $X$ that do not begin with $c_r^{-1}$ or $c_{r+1}$, and $c_0:=c_k$.
 
The walk on $\Gra(L,X)$ corresponding to $(\eta,c)$ starts at the origin, goes into $T_0$, creates the non-trivial elements of $G$, then comes back to $e'$ before going into $c_1 T_1$, where again it creates the non-trivial elements of $G$, until it reaches $c_1\cdots c_{k-1}T_{k-1}$, it creates the non-trivial elements of $G$, comes back to $c_1\cdots c_{k-1}$ and finally ends at $c$.
As explained above we have to consider the set
\[
 \conjrep^A(G\wr L)=\bigcup_{\stackrel{c_1\cdots c_k\in L,\,k\geq 1}{\textup{cyclically reduced}}} E_{c_1,c_k^{-1}} E_{c_2,c_1^{-1}}\cdots E_{c_k,c_{k-1}^{-1}},
\]
where the $E_{x_1,x_2}$'s refer to (\ref{El_bloc}).
\end{comment}
All the elements of $\conjrep^A(G\wr L)$ are minimal in their conjugacy class, every element of type $A$ in $G\wr L$ is conjugate to an element in $\conjrep^A(G\wr L)$, and two elements in $\conjrep^A(G\wr L)$ are conjugate if and only if they differ by a cyclic permutation of the $E_{x_1,x_2}$ components.\\

For every $k\geq 1$, the cyclic group $C_k$ acts on the set
\[
 {\conjrep_k} ^A(G\wr L)=\bigcup_{\stackrel{c_1\cdots c_k\in L}{\textup{cyclically reduced}}} E_{c_k^{-1},c_1} E_{c_1^{-1},c_2}\cdots E_{c_{k-1}^{-1},c_k}
\]
by cyclic permutation.

Since for $k\neq k'$, $\quotient{{\conjrep_k} ^A(G\wr L)}{C_k}\cap\quotient{{\conjrep_{k'}} ^A(G\wr L)}{C_{k'}}=\emptyset$, 
\[
 \cgs^A _{G\wr L,\vec{Y}}(z)=\sum_{k\geq 1}F_{\quotient{{\conjrep_k} ^A(G\wr L)}{C_k}}(z).
\]
For every $k>1$ and every $m$, the action of $C_k$ on ${\conjrep_k} ^A(G\wr L)$ preserves
\[
 S^k(m):=\left\{ U\in{\conjrep_k} ^A(G\wr L) \,:\,|U|_{\vec{Y}}=m\right\}.
\]
So $[z^m]F_{\quotient{{\conjrep_k} ^A(G\wr L)}{C_k}}(z)$ is the number of orbits in $S^k(m)$ under the action of $C_k$. Using Burnside's Lemma we find
\[
 [z^m]F_{\quotient{{\conjrep_k} ^A(G\wr L)}{C_k}}(z)=\frac{1}{k}\sum_{r\in C_k}\card\fix(r)=\frac{1}{k}\sum_{d|k}\sum_{\stackrel{1\leq r\leq k}{(r,k)=d}}\card\fix(r),
\]
where $\fix(r)$ denotes the set of elements of $S^k(m)$ fixed by the class of $r$ in $C_k$.
In fact, for $d|k$, $1\leq r\leq k$ and $(r,k)=d$, $U\in\fix(r)$ if and only if $U=W^{\frac{k}{d}}$ for some $W\in{\conjrep_d} ^A(G\wr L) $ and hence $|W|_{\vec{Y}}=\frac{md}{k}$. So
\[
 \card\fix(r)=[z^{\frac{md}{k}}]F_{{\conjrep_d} ^A(G\wr L)}(z)=[z^m]F_{{\conjrep_d} ^A(G\wr L)}(z^{\frac{k}{d}} ).
\]
Therefore we find 
\[
 F_{\quotient{{\conjrep_k} ^A(G\wr L)}{C_k}}(z)=\frac{1}{k}\sum_{d|k}\phi(\frac{k}{d})F_{{\conjrep_d} ^A(G\wr L)}(z^{\frac{k}{d}}),
\]
and summing over all possible values of $k$ gives
\[
 \cgs^A _{(G\wr L,\vec{Y})}(z)=\sum_{k\geq 1}\sum_{d|k}\frac{1}{k}\phi(\frac{k}{d})F_{{\conjrep_d} ^A(G\wr L)}(z^{\frac{k}{d}})
 =\sum_{r\geq 1}\sum_{s\geq 1}\frac{\phi(r)}{rs}F_{{\conjrep_s} ^A(G\wr L)}(z^r).
\]
By (\ref{F_conj_rep_k}) we obtain the result.
\end{proof}

\begin{corollary}\label{equality_rad_conv_wr}
 If $L$ is infinite, then the radius of convergence of the conjugacy growth series $\cgs _{(G\wr L,\vec{Y})}(z)$ is the same as the radius of convergence of the standard growth series $\sgs _{(G\wr L,\vec{Y})}(z)$.
\end{corollary}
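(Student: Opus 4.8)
The plan is to show the two radii of convergence coincide by establishing inequalities in both directions. One direction is trivial: since every conjugacy class contributes at most as many minimal-length representatives as there are group elements of that length, we have $[z^n]\cgs_{(G\wr L,\vec{Y})}(z)\leq [z^n]\sgs_{(G\wr L,\vec{Y})}(z)$ for all $n$, whence $\RC(\cgs_{(G\wr L,\vec{Y})})\geq\RC(\sgs_{(G\wr L,\vec{Y})})$. The real content is the reverse inequality $\RC(\cgs_{(G\wr L,\vec{Y})})\leq\RC(\sgs_{(G\wr L,\vec{Y})})$, i.e.\ that conjugacy growth is not strictly slower than standard growth. For this I would argue that it suffices to treat the type-A contribution $\cgs^A_{(G\wr L,\vec{Y})}(z)$, since $\cgs=\cgs^A+\cgs^B\geq\cgs^A$ coefficientwise gives $\RC(\cgs)\leq\RC(\cgs^A)$, so it is enough to show $\RC(\cgs^A_{(G\wr L,\vec{Y})})\leq\RC(\sgs_{(G\wr L,\vec{Y})})$ when $L$ is infinite.

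The next step is to extract the radius of convergence of $\sgs_{(G\wr L,\vec{Y})}(z)$ itself. By Parry's description (the $\conjrep^A$-type building blocks are essentially the same elementary blocks $E$ that Parry uses), $\sgs_{(G\wr L,\vec{Y})}(z)$ is built from $F_E(z)=z\sgs_{(G,Y)}(z)\bigl(F_\T(z^2\sgs_{(G,Y)}(z),z^2(\sgs_{(G,Y)}(z)-1))\bigr)^{2M+N-2}$, and one checks that $\RC(\sgs_{(G\wr L,\vec{Y})})$ is the value $t>0$ at which $(2M+N-1)F_E(t)=1$ (or a smaller singularity of $F_E$ coming from the algebraic equation defining $F_\T$, whichever comes first); in any case $\RC(\sgs_{(G\wr L,\vec{Y})})$ equals the radius of convergence of $F_{\conjrep^A(G\wr L)}(z)=F_{\geocl(L,X)^A}(F_E(z))=\sum_{k\geq 1}\bigl((2M+N-1)^k+(-1)^k(M+N-1)+M\bigr)(F_E(z))^k$, because the type-A part already realizes the dominant growth. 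Now compare this with formula (\ref{cgs_A}): the $s=1$, $r=1$ term of $\cgs^A_{(G\wr L,\vec{Y})}(z)$ is exactly $\bigl((2M+N-1)+(-1)(M+N-1)+M\bigr)F_E(z)=MF_E(z)$, which already has a positive radius; more to the point, dropping all $r\geq 2$ terms and keeping $r=1$ gives $\cgs^A_{(G\wr L,\vec{Y})}(z)\geq \sum_{s\geq 1}\frac{(2M+N-1)^s+(-1)^s(M+N-1)+M}{s}(F_E(z))^s$ coefficientwise, and the latter series has the same radius of convergence as $\sum_{s\geq 1}\bigl((2M+N-1)^s+(-1)^s(M+N-1)+M\bigr)(F_E(z))^s=F_{\conjrep^A(G\wr L)}(z)$, since dividing coefficients by $s$ (equivalently, integrating) does not change the radius of convergence of a power series with nonnegative coefficients. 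Hence $\RC(\cgs^A_{(G\wr L,\vec{Y})})\leq\RC(F_{\conjrep^A(G\wr L)})=\RC(\sgs_{(G\wr L,\vec{Y})})$, and combined with the trivial inequality this finishes the proof.

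The main obstacle I anticipate is the bookkeeping needed to justify that $\RC(\sgs_{(G\wr L,\vec{Y})})$ is genuinely governed by the type-A/cyclically-reduced part — i.e.\ that the full standard growth series and $F_{\conjrep^A(G\wr L)}(z)$ share a radius of convergence — together with confirming that the substitution $z\mapsto F_E(z)$ behaves well (that $F_E$ has positive radius, is continuous and increasing up to its radius, and that the composition's singularity is the first point where either $F_E$ blows up or $(2M+N-1)F_E$ hits $1$). Here one uses that $L$ is infinite, so $2M+N\geq 2$ and $2M+N-1\geq 1$, ensuring the geometric-type series $\sum_k (2M+N-1)^k w^k$ actually contributes a finite positive radius; if $2M+N-1=1$ (the case $L\cong\Z$ or $L\cong C_2*C_2$) the argument still goes through because then $\RC$ is simply the radius of $F_E$ itself, and the same $r=1$ comparison applies. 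Everything else — the coefficientwise inequalities and the invariance of $\RC$ under termwise division by $s$ — is routine, using the $\limsup$ formula for the radius of convergence from the preliminaries together with $\lim_{s\to\infty}s^{1/s}=1$.
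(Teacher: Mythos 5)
Your plan follows the paper's proof almost exactly: the trivial inequality $\RC(\cgs_{(G\wr L,\vec{Y})})\geq\RC(\sgs_{(G\wr L,\vec{Y})})$, reduction to showing $\RC(\cgs^A_{(G\wr L,\vec{Y})})\leq\RC(\sgs_{(G\wr L,\vec{Y})})$, and then bounding $\cgs^A$ from below by the $r=1$ part $\sum_{s\geq 1}\frac{r_s}{s}(F_E(z))^s$ (where $r_s=(2M+N-1)^s+(-1)^s(M+N-1)+M$); the paper uses precisely this lower bound, phrased as $\sum_k F_{\conjrep_k^A}(z)/k$. The identification $\RC(\sgs)=\RC\bigl(F_{\geocl(L,X)^A}(F_E(z))\bigr)$ via Parry's Theorem 4.1 and Proposition \ref{positive_preim_1} is also how the paper proceeds. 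The paper is slightly more elaborate in that it also records the coefficientwise upper bound $\cgs^A\leq F_{\geocl(L,X)^A}(F_E(z))$, but as you observe that inequality is not needed for the final conclusion — your streamlined version, using only the lower bound, suffices.

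The one point where your plan is imprecise is the claim that ``dividing coefficients by $s$ (equivalently, integrating) does not change the radius of convergence.'' That statement is true for a power series in a single variable $w$, i.e.\ $\RC_w\bigl(\sum_s\tfrac{r_s}{s}w^s\bigr)=\RC_w\bigl(\sum_s r_s w^s\bigr)$. But the radius you need to control is in $z$, after the substitution $w=F_E(z)$, and integrating the outer series is \emph{not} the same as integrating the composed series in $z$. So the invocation of $\lim_{s\to\infty}s^{1/s}=1$ is slightly misdirected: the index that matters in the $\limsup$ formula is the $z$-degree $m$, not the outer exponent $s$. The paper closes exactly this gap by exploiting that $F_E(0)=0$: since $(F_E(z))^s$ contributes nothing to $[z^m]$ when $s>m$, one gets
$[z^m]\sum_{s\geq 1}\tfrac{r_s}{s}(F_E(z))^s=[z^m]\sum_{s=1}^m\tfrac{r_s}{s}(F_E(z))^s\geq\tfrac{1}{m}[z^m]\sum_{s=1}^m r_s(F_E(z))^s=\tfrac{1}{m}[z^m]\sum_{s\geq 1}r_s(F_E(z))^s$,
and then $m^{1/m}\to 1$ in the root test finishes the job. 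You should make this substitution-aware step explicit (or alternatively argue that both outer series $\sum_s\frac{r_s}{s}w^s$ and $\sum_s r_sw^s$ actually diverge at their common radius $\tfrac{1}{2M+N-1}$, so that the $z$-radius of either composition is governed solely by where $F_E$ reaches that value). With that fixed, your plan is a correct and mildly more economical version of the paper's argument.
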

\begin{proof}
 Firstly, it is clear that $\RC(\cgs^A _{(G\wr L,\vec{Y})}(z))\geq \RC(\cgs _{(G\wr L,\vec{Y})}(z))\geq\RC(\sgs _{(G\wr L,\vec{Y})}(z))$, so it remains to prove that $\RC(\cgs^A _{(G\wr L,\vec{Y})}(z))\leq \RC(\sgs _{(G\wr L,\vec{Y})}(z))$. In [\cite{P_92}, Theorem 4.1], it is proved that $\RC(\sgs _{(G\wr L,\vec{Y})}(z))$ is the smallest positive value $t$ such that 
 \[
(2M+N-1)(F_{E}(t))=1.
 \]
Proposition \ref{positive_preim_1} and Remark \ref{RC_cycl_redu} show that $\RC\left(F_{\geocl(L,X)^A}(F_E(z))\right)$ is also equal to the smallest positive value $t$ such that 
 \[
(2M+N-1)(F_{E}(t))=1.
 \]
 {\bfseries Claim:} For every $m\in\N$, the following holds:
 \begin{align*}
  [z^m]\sum_{k\geq 1}\frac{F_{{\conjrep_k} ^A(G\wr L)}(z)}{k}\leq [z^m]\cgs^A _{(G\wr L,\vec{Y})}(z)\leq& [z^m]F_{\geocl(L,X)^A}(F_E(z)).
   \end{align*}
   %Here ${\conjrep_k} ^A(G\wr L)$ refers to (\ref{Conc_k_el_bloc}) and $\conjrep^A(G\wr L)$ refers to (\ref{Conc_el_bloc}).
And hence 
\[
 \RC\left(\sum_{k\geq 1}\frac{F_{{\conjrep_k} ^A(G\wr L)}(z)}{k}\right)\geq \RC(\cgs^A _{(G\wr L,\vec{Y})}(z))\geq \RC\left(F_{\geocl(L,X)^A}(F_E(z))\right).
\]
{\bfseries Proof of the claim:} The first inequality is due to the fact that for each $k$ there are at most $k$ different elements in $\conjrep_k ^A(G\wr L)$ that represent the same conjugacy class (counting up to cyclic permutation of order $k$). The second inequality follows from
  \begin{align*}
 [z^m]\cgs^A _{(G\wr L,\vec{Y})}(z) &\leq [z^m]\sum_{k\geq 1}F_{{\conjrep_k} ^A(G\wr L)}(z)\\
  =&[z^m]F_{\conjrep^A(G\wr L) }(z)\\
  =&[z^m]F_{\geocl(L,X)^A}(F_E(z)).
     \end{align*}
 
 The corollary will follow from the claim and the inequality below:
 \[
  \RC\left(\sum_{k\geq 1}\frac{F_{{\conjrep_k} ^A(G\wr L)}(z)}{k}\right)\leq \RC\left(\sum_{k\geq 1}F_{{\conjrep_k} ^A(G\wr L)}(z)\right).
 \]
\begin{comment} 
For simplicity let us write 
\[
 r_k:=(2M+N-1)^k+(-1)^k(M+N-1)+M.
\]
 Hence $F_{{\conjrep_k} ^A(G\wr L)}(z)=r_k (F_E(z))^k$, where $F_E(z)$ is given by (\ref{growth_el_bloc}).
 \end{comment}
Note that because $F_E(0)=0$, for every $m\in\N$ we have
\[
  [z^m]\sum_{k\geq 1}F_{{\conjrep_k} ^A(G\wr L)}(z)=[z^m]\sum_{k= 1}^mF_{{\conjrep_k} ^A(G\wr L)}(z)
\]
and
\[
 [z^m]\sum_{k\geq 1}\frac{F_{{\conjrep_k} ^A(G\wr L)}(z)}{k}=[z^m]\sum_{k= 1}^m\frac{F_{{\conjrep_k} ^A(G\wr L)}(z)}{k}.
\]
Hence
\begin{align*}
[z^m]\sum_{k\geq 1}\frac{F_{{\conjrep_k} ^A(G\wr L)}(z)}{k}=&[z^m]\sum_{k= 1}^m\frac{F_{{\conjrep_k} ^A(G\wr L)}(z)}{k} \\
							    \geq&[z^m]\frac{1}{m}\sum_{k= 1}^m F_{{\conjrep_k} ^A(G\wr L)}(z)=\frac{1}{m}[z^m]\sum_{k\geq 1}F_{{\conjrep_k} ^A(G\wr L)}(z).
\end{align*}
Therefore passing to limsup we find
\begin{align*}
 \limsup_{m\to\infty}\root m \of{[z^m]\sum_{k\geq 1}\frac{F_{{\conjrep_k} ^A(G\wr L)}(z)}{k}}\geq& \limsup_{m\to\infty}\root m \of{\frac{1}{m}[z^m]\sum_{k\geq 1}F_{{\conjrep_k} ^A(G\wr L)}(z)}\\
							      &= \limsup_{m\to\infty}\root m \of{\sum_{k\geq 1}F_{{\conjrep_k} ^A(G\wr L)}(z)}.
\end{align*}
This proves that 
\[
 \RC\left(\sum_{k\geq 1}\frac{F_{{\conjrep_k} ^A(G\wr L)}(z)}{k}\right)\leq \RC\left(\sum_{k\geq 1}F_{{\conjrep_k} ^A(G\wr L)}(z)\right),
\]
and finalizes the proof.

\end{proof}

\subsection{Contribution of conjugacy classes with cursor of finite order}\label{cursor_finite}
Recall that $\cgs_{(G\wr L,\vec{Y})}^B(z)$ denotes the contribution to the conjugacy growth series of $G\wr L$ of the conjugacy classes of type B (i.e. with cursors of finite order).

  We will write $\cgs_{(G\wr L,\vec{Y})}^{B^{\neq e'}}(z)$ for the contribution to $\cgs_{(G\wr L,\vec{Y})}^B(z)$ of the conjugacy classes of elements $(\eta,b)$, where $b\neq e'$, and $\cgs_{(G\wr L,\vec{Y})}^{e'}(z)$ for the contribution to $\cgs_{(G\wr L,\vec{Y})}^B(z)$ of the conjugacy classes of elements of the form $(\eta,e')$. Hence $\cgs_{(G\wr L,\vec{Y})}^B(z)=\cgs_{(G\wr L,\vec{Y})}^{B^{\neq e'}}(z)+\cgs_{(G\wr L,\vec{Y})}^{e'}(z)$. We consider these two cases separately.

\subsubsection{Contribution of conjugacy classes with torsion cursor}\label{Sec_cgs_Bneqe} 
 
 \begin{proposition} \label{Lem_cgs_Bneqe}
 The growth series $\cgs_{(G\wr L,\vec{Y})}^{B^{\neq e'}}(z)$ is given by
  
  \begin{equation}
  \cgs_{(G\wr L,\vec{Y})}^{B^{\neq e'}}(z)=Nz\cgs_{(G,Y)}(z) \left(F_\T(z^2\cgs_{(G,Y)}(z),z^2(\cgs_{(G,Y)}(z)-1))\right)^{2M+N-1}\label{cgs_Bneqe}.
\end{equation}
  \end{proposition}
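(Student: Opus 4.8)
The goal is to compute $\cgs_{(G\wr L,\vec Y)}^{B^{\neq e'}}(z)$, the contribution of conjugacy classes of elements $(\eta,b)$ with $b$ of finite order and $b\neq e'$. By Lemma~\ref{min_conj_length_wr_tree} every such conjugacy class contains a minimal-length representative $(\zeta,b_j)$ for some torsion generator $b_j$, $j\in\{1,\ldots,N\}$, and by Corollary~\ref{equiv_conj_non_triv_curs}~B) (together with a fixed choice of conjugacy representative in $G$ for each conjugacy class of $G$), there is a \emph{unique} such conjugacy representative with $\supp(\zeta)\subset T$, where $T$ is the set of freely reduced words in $X$ not beginning with $b_j$. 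So the plan is: build a conjugacy geodesic normal form consisting, for each $j\in\{1,\ldots,N\}$, of these canonical elements $(\zeta,b_j)$, and compute the growth series of that language.

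The key step is to count, for a fixed $b_j$, the contribution of all $(\zeta,b_j)$ with $\supp(\zeta)\subset T$. This parallels exactly the computation of $F_E(z)$ in Section~\ref{Sec_cgs_A}, but with two differences. First, the walk on $\Gra(L,X)$ now starts at $e'$, must visit $\supp(\zeta)$, and ends at $b_j$; since $b_j$ is adjacent to $e'$ and the edge $\{e',b_j\}$ is not among the directions explored by $T$, the walk factors as: explore the $2M+N-1$ directions other than $b_j$ (each contributing a tree $T_r\in\T$), return to $e'$, then traverse the single edge to $b_j$, contributing a factor $z$. This gives $2M+N-1$ copies of $F_\T$ (as opposed to $2M+N-2$ in $F_E$, since there the direction $x_2$ was also forbidden). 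Second, and crucially, because $b$ has finite order $2$ here, Corollary~\ref{conj_rep_supp_singl}~B) and Lemma~\ref{min_conj_length_wr}~\eqref{singleton_for_coset}~B) tell us that at each occupied vertex we place an element of $\min(\pi_{\langle b\rangle t}(\eta))_Y$, i.e.\ a \emph{conjugacy representative} in $G$, so the relevant series of $G$ is $\cgs_{(G,Y)}(z)$ rather than $\sgs_{(G,Y)}(z)$. As in the proof of the lemma computing $F_E$: a non-leaf vertex of a $T_r$-tree may carry any conjugacy representative (factor $\cgs_{(G,Y)}(z)$), a leaf vertex other than $e'$ must carry a nontrivial one (factor $\cgs_{(G,Y)}(z)-1$), the root $e'$ may carry any conjugacy representative (an extra factor $\cgs_{(G,Y)}(z)$), and a tree with $i$ non-leaves and $j$ leaves costs $z^{2(i+j)}$ in walk length. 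Summing over trees in one direction gives $F_\T\!\left(z^2\cgs_{(G,Y)}(z),\,z^2(\cgs_{(G,Y)}(z)-1)\right)$, over all $2M+N-1$ directions the $(2M+N-1)$-th power, and multiplying by the root factor $\cgs_{(G,Y)}(z)$ and the edge factor $z$ yields
\[
 z\,\cgs_{(G,Y)}(z)\left(F_\T\!\bigl(z^2\cgs_{(G,Y)}(z),z^2(\cgs_{(G,Y)}(z)-1)\bigr)\right)^{2M+N-1}
\]
for a single $b_j$. Finally, summing over the $N$ torsion generators — these give disjoint sets of conjugacy representatives, since the cursor $b_j$ is itself a conjugacy invariant (two torsion generators $b_i,b_j$ with $i\neq j$ are non-conjugate in $L$) — multiplies by $N$ and gives \eqref{cgs_Bneqe}.

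The main obstacle, and the only point requiring genuine care, is the uniqueness/well-definedness of this conjugacy normal form: one must check that distinct canonical data $(j,\zeta)$ (with $\supp(\zeta)\subset T$ and $\zeta$-values being the chosen $G$-conjugacy representatives) really do give non-conjugate elements of $G\wr L$, and conversely that every type-$B^{\neq e'}$ class hits exactly one such datum. The forward direction (every class reaches one) is Lemma~\ref{min_conj_length_wr_tree} plus Corollary~\ref{equiv_conj_non_triv_curs}~B); for injectivity one invokes Lemma~\ref{to_conj_anycase}: if $(\zeta,b_j)$ and $(\zeta',b_{j'})$ are conjugate then $b_{j'}=d^{-1}b_j d$ forces $j=j'$ (distinct torsion generators lie in distinct conjugacy classes of the free-product $L$), and then the condition $\pi_{\langle b_j\rangle t}(\zeta)=\pi_{\langle b_j\rangle t}(\zeta'^d)$ in $G_\sim$, combined with the fact that the supports lie in $T$ (which meets each coset of $\langle b_j\rangle$ at most once) and that the $G$-values are the \emph{fixed} conjugacy representatives, forces $d$ to act trivially on the relevant cosets and hence $\zeta=\zeta'$. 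Once this bookkeeping is in place the series identity is just the blockwise counting described above.
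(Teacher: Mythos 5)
Your proposal is correct and takes essentially the same route as the paper: reduce via Lemma~\ref{min_conj_length_wr_tree} and Corollary~\ref{equiv_conj_non_triv_curs}~B) to canonical representatives $(\zeta,b_j)$ with $\supp(\zeta)\subset T$, count the walks in the $2M+N-1$ directions away from $b_j$ with $G$-conjugacy classes (hence $\cgs_{(G,Y)}$ in place of $\sgs_{(G,Y)}$) at the visited vertices, multiply by the root factor $\cgs_{(G,Y)}(z)$ and the single-edge factor $z$, and sum over the $N$ torsion generators. Your additional remarks on injectivity of the normal form (via Lemma~\ref{to_conj_anycase} and the fact that distinct $b_j$ are non-conjugate in $L$) elaborate a point the paper leaves implicit, but the overall argument coincides.
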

   \begin{proof}
   Let $(\eta,b)$ be an element in $G\wr L$, where $b\in L$ is a torsion element.
  By Lemma \ref{min_conj_length_wr_tree} one can assume that $b=b_j$ for some $j\in\{1,\ldots,N\}$. Let $T$ be the right transversal of $\langle b_j \rangle$ in $L$ consisting of the set of (freely reduced) words in $L$ that do not begin with $b_j$. By Corollary \ref{equiv_conj_non_triv_curs} B) one can assume that $\supp(\eta)\subset T$. Then an element $(\eta',b)$ with $\supp(\eta')\subset T$ is conjugate to $(\eta,b)$ in $G\wr L$ if and only if for every $l\in L$, $\eta'(l)$ is conjugate to $\eta(l)$ in $G$. 
  
  Hence we have to consider the walks on $\Gra(L,X)$ that go first in the $2M+N-1$ directions other than $b$, create the non-trivial conjugacy classes in $G$ of $\eta$, come back to the origin, and then finish at $b$ without creating any element at this position. So the contribution to $\cgs_{(G\wr L,\vec{Y})}^{B^{\neq e'}}(z)$ of the conjugacy classes of the elements $(\eta, b_j)$ for a given $j\in\{1,\ldots,N\}$ is given by
  \[
   z\cgs_{(G,Y)}(z)\left(F_\T(z^2\cgs_{(G,Y)}(z),z^2(\cgs_{(G,Y)}(z)-1))\right)^{2M+N-1}.
  \]
The term $\left(F_\T(z^2\sgs_{(G,Y)}(z),z^2(\sgs_{(G,Y)}(z)-1))\right)^{2M+N-1}$ counts the contribution of the walks starting at the origin, visiting every vertex of $\supp(\eta)$ (in the $2M+N-1$ directions away from $b_j$), creating the non-trivial conjugacy classes in $G$, and coming back to $e'$. The term $\cgs_{(G,Y)}$ counts the contribution at the component $e'$, and $z$ accounts for the step from $e'$ to $b$. Therefore summing over all the $j$'s, we find the result.
\end{proof}

 \subsubsection{Contribution of conjugacy classes with trivial cursor}\label{Sec_cgs_e}
We now explain how to compute $\cgs_{(G\wr L,\vec{Y})}^{e'}(z)$.
Lemma \ref{min_conj_length_wr_triv_curs} shows that a non-trivial element $(\eta,e')$ is minimal in its conjugacy class if and only if the tree spanned by $\supp(\eta)$ contains the origin $e'$ and all the components of $\eta$ are minimal in their conjugacy class (in $G$). Moreover, two such elements $(\eta,e')$, $(\eta',e')$ are conjugate if and only if there exists $d\in L$ such that for every $l\in L$, $\eta'(l)$ is conjugate (in $G$) to $\eta^d(l)$. 

Therefore we have to consider the following. For a finite subset $L'\subset L$, we write $\spanned(L')$ for the set of vertices of the tree spanned by $L'$, i.e. the smallest subset of $L$ containing $L'$ that forms a tree in $\Gra(L,X)$. Let $\Trees$ be the set 
\[
 \Trees:=\{L'\subset L\,:\,\card L'<\infty,\, e'\in L',\,L'=\spanned(L')\}.
\]
Although $L'\in \Trees$ denotes a set of vertices, we call it a tree, denote by $\Leafs(L')$ its set of leaves (vertices of degree $\leq 1$), and similarly write $\NLeafs(L'):=L'\setminus \Leafs(L')$ for its set of non-leaves.
Now let
\[
 \FuntreesG:=\bigcup_{L'\in \Trees}\{\eta:L'\rightarrow G:\,\forall l\in\Leafs(L'),\,\eta(l)\neq e\}.
\]
The set $\FuntreesG$ represents the collection of the labeled finite trees containing the origin with the leaves having non-trivial label. For an element $\eta:L'\rightarrow G$ in $\FuntreesG$ we define a weight $|\eta|_{\FuntreesG,\sim}\in\N\setminus\{0\}$ to be 
\[
 \eta|_{\FuntreesG,\sim}:=2\card\{\textup{ edges of the tree spanned by }L'\}+\sum_{l\in L'}|\eta(l)|_{\sim,Y}.
\]
Two functions $\eta_1:L_1\rightarrow G$ and $\eta_2: L_2\rightarrow G$ in $\FuntreesG$ are called equivalent if there exists $d\in L$ such that $L_1=d L_2$ and for every $l\in L_1$, $\eta_1(l)$ is conjugate in $G$ to $\eta_2(d^{-1}l)$. We denote this equivalence by $\sim$. Note that the weight $|\cdot|_{\FuntreesG,\sim}$ is preserved under this equivalence. Then there is a bijection between the set $\quotient{\FuntreesG}{\sim}$ and the set of non-trivial conjugacy classes $[(\eta,e')]$ in $G\wr L$, which for every $n$ restricts to the set of elements of $\quotient{\FuntreesG}{\sim}$ of weight $n$ and the set non-trivial conjugacy classes $[(\eta,e')]$ of conjugacy length $n$, i.e the following diagram commutes : 
\[
 \xymatrix{\quotient{\FuntreesG}{\sim}\ar@{<->}[rr] \ar[rdd]_{|\cdot|_{\FuntreesG,\sim}} & &\{[(\eta,e')]\in {(G\wr L)}_\sim\,:\,\eta\neq \vec{e}\}\ar[ddl]^{|\cdot|_{\sim,G\wr L}} \\
 &\circlearrowright & \\
    & \N\setminus\{0\}\rlap{\,.} &}
\]

One restricts the relation $\sim$ to $\Trees$ by defining that $L_1\sim L_2$ if there exists $d\in L$ such that $L_1=dL_2$. Let $\Treesbylefac\subset\Trees$ be a set of representatives of the elements in $\Trees$ with respect to $\sim$. 

For any $L'\in\Treesbylefac$, the subgroup of $L$ fixing $L'$ by left multiplication is finite, since $L'$ is finite, hence this subgroup is either trivial or isomorphic to $C_2$ (generated by a conjugate of some $b_j$); in the former case we call $L'$ {\em symmetric}, and in the latter {\em asymmetric}.
Note that since an element $L'$ of $\Treesbylefac$ contains $e'$, it is symmetric if and only if there exist $L_1\subset L'$, $h\in L_1$ and $j\in\{1,\ldots,N\}$ such that $L'=L_1\sqcup hb_jh^{-1}L_1$. In this case $L_1$, $h$ and $j$ are unique, and $\langle hb_j h^{-1} \rangle$ is the subgroup of $L$ fixing $L'$ by left multiplication; the element $h$ is the vertex where $L_1$ connects to $hb_jh^{-1}L_1$, and the following holds:
\begin{equation}
 \left\{\begin{array}{c}
         \card\Leafs(L_1\sqcup hb_jh^{-1}L_1)=2(\card\Leafs(L_1)-1)\quad\textup{and}\quad \card\NLeafs(L_1\sqcup hb_jh^{-1}L_1)=2(\card\NLeafs(L_1)+1)\\
         \textup{if } L_1\neq\{e'\},\textup{ while}\\
         \card\Leafs(\{e,b_j\}=2\quad\textup{and}\quad \card\NLeafs(\{e,b_j\})=0\\
         \textup{ for } j\in\{1,\ldots,M\}
        \end{array}\right.\label{leaves_and_nonleaves}
\end{equation}

Note that the number of edges in the tree spanned by $ L'\in\Treesbylefac$ is $\card L'-1$.

By Burnside's Lemma we have the following.
\begin{proposition}\label{Lem_cgs_Be}
The contribution to conjugacy growth series of the conjugacy classes of the elements with trivial cursor is given by
\begin{align*}
 \cgs_{(G\wr L,\vec{Y})}^{e'}(z)=&\,1+\sum_{\textup{asymmetric }L'\in\Treesbylefac }z^{2(\card L'-1)}{\left(\cgs_{(G,Y)}(z)-1\right)}^{\card\Leafs(L')}{\cgs_{(G,Y)}(z)}^{\card\NLeafs(L')}\\  
   &+\frac{1}{2}\sum_{\textup{symmetric } L'=L_1\sqcup hb_jh^{-1}L_1\in\Treesbylefac}z^{2(\card L'-1)} {\left(\cgs_{(G,Y)}(z)-1\right)}^{\card\Leafs(L')}{\cgs_{(G,Y)}(z)}^{\card\NLeafs(L')}\\
   &+\frac{1}{2}M z^2(\cgs_{(G,Y)}(z^2)-1)\\
   &+\frac{1}{2}\sum_{\underset{\,L_1\neq\{e\}}{\textup{symmetric } L'=L_1\sqcup hb_jh^{-1}L_1\in\Treesbylefac}}z^{2(\card L'-1)} {\left(\cgs_{(G,Y)}(z^2)-1\right)}^{\card\Leafs(L_1)-1}{\cgs_{(G,Y)}(z^2)}^{\card\NLeafs(L_1)+1}.\label{fat}
\end{align*}
\end{proposition}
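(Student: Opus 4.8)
The plan is to count the non-trivial conjugacy classes $[(\eta,e')]$ by applying Burnside's Lemma to the action of the (finite) pointwise stabilizer of each tree $L'\in\Treesbylefac$, and then to organize the count according to whether $L'$ is symmetric or asymmetric. Concretely, by Lemma~\ref{min_conj_length_wr_triv_curs} a non-trivial class $[(\eta,e')]$ has a representative whose support spans a finite subtree through the origin with all components minimal in their $G$-conjugacy classes, and two such representatives are conjugate exactly when they differ by a left translation $d\in L$ combined with componentwise $G$-conjugation. Passing to the quotient $\quotient{\FuntreesG}{\sim}$ this says: fix one tree $L'$ in each left-translation orbit (i.e. $L'\in\Treesbylefac$), and count the functions $\eta\colon L'\to G_{\sim}$ that are nontrivial on every leaf, up to the action of $\mathrm{Stab}(L')$, the subgroup of $L$ fixing $L'$ setwise (equivalently pointwise, since a tree automorphism coming from left translation that fixes the whole vertex set is trivial on vertices, but the relevant stabilizer here permutes the vertices). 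The ``$+1$'' in the formula accounts for the trivial class $[(\vec e,e')]$.

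First I would record that $\mathrm{Stab}(L')$ is trivial (if $L'$ is asymmetric) or $\cong C_2$ generated by a conjugate $hb_jh^{-1}$ (if symmetric), as stated in the excerpt. In the asymmetric case Burnside degenerates: the number of orbits equals the number of $\eta$'s, and each such $\eta$ assigns to every non-leaf an arbitrary element of $G_{\sim}$ (contributing $\cgs_{(G,Y)}(z)$ per vertex, with $z$-exponent recording the $G$-conjugacy length) and to every leaf a non-trivial element of $G_\sim$ (contributing $\cgs_{(G,Y)}(z)-1$), while the walk contributes $z^{2(\card L'-1)}$ since the tree has $\card L'-1$ edges each traversed twice. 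This gives the second summand. In the symmetric case $\mathrm{Stab}(L')=\langle g\rangle$ with $g=hb_jh^{-1}$ of order $2$: the identity element fixes all $\eta$'s, giving the same term as in the asymmetric case with a factor $\frac12$ in front; the non-trivial element $g$ fixes exactly those $\eta$ that are $g$-invariant up to $G$-conjugacy, i.e. $\eta(l)\sim_G\eta(g^{-1}l)$ for all $l$. Writing $L'=L_1\sqcup gL_1$ (with $L_1$, $h$, $j$ the unique data from~(\ref{leaves_and_nonleaves})), such a $g$-invariant $\eta$ is determined by its values on $L_1$: on vertices of $L_1$ that are swapped with a distinct vertex of $gL_1$ the two traversals are forced to coincide, so the effective $z$-weight of each such $G$-value doubles — hence $\cgs_{(G,Y)}(z^2)$ appears — and the leaf/non-leaf counts of $L_1$ are governed by~(\ref{leaves_and_nonleaves}). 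The case $L_1=\{e'\}$ (i.e. $L'=\{e',b_j\}$ for one of the $M$ choices giving a genuine order-two segment — here the $M$ rather than $N$ must be tracked carefully) is degenerate and produces the isolated $\frac12 Mz^2(\cgs_{(G,Y)}(z^2)-1)$ term, and the remaining symmetric trees with $L_1\neq\{e'\}$ produce the last summand.

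I expect the main obstacle to be the bookkeeping in the symmetric case: correctly identifying which vertices of $L'$ are fixed by $g$ versus swapped in pairs, translating that into the substitution $z\mapsto z^2$ on the appropriate factors, and getting the exponents of $\cgs_{(G,Y)}(z^2)-1$ and $\cgs_{(G,Y)}(z^2)$ to match $\card\Leafs(L_1)-1$ and $\card\NLeafs(L_1)+1$ via~(\ref{leaves_and_nonleaves}). The subtlety is that the fixed point $h$ of $g$ (where $L_1$ meets $gL_1$) lies in $L_1$ and contributes a single $G$-value traversed twice, which explains why it is counted as a non-leaf of $L'$ and accounts for the ``$+1$'' shift; meanwhile the leaf of $L_1$ adjacent to $h$ along the connecting edge becomes an internal vertex of $L'$, explaining the ``$-1$'' shift in the leaf count. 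Separating out the $L_1=\{e'\}$ case cleanly — where $h=e'$, there is no connecting leaf to absorb, and the relevant $b_j$ must have order exactly two so only the $M$ ``inversion-free'' directions contribute — is where I would be most careful. Once these combinatorial identifications are pinned down, assembling the four pieces (trivial class; asymmetric trees; identity-part of symmetric trees; $g$-part of symmetric trees, itself split into $L_1=\{e'\}$ and $L_1\neq\{e'\}$) and dividing the symmetric contributions by $\card\mathrm{Stab}(L')=2$ yields the displayed formula.
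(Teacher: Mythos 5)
Your overall strategy (Burnside on the stabilizer of each $L'\in\Treesbylefac$, split by asymmetric/symmetric, split the symmetric stabilizer $C_2=\{e',g\}$ into the identity contribution and the $g$-contribution, separate out $L_1=\{e'\}$) is exactly the paper's, and your treatment of the trivial class, the asymmetric trees, and the identity part of the symmetric trees is correct. However, there is a genuine error in your analysis of the $g$-fixed part. You assert that ``the fixed point $h$ of $g$ \dots lies in $L_1$'' and that ``the leaf of $L_1$ adjacent to $h$ along the connecting edge becomes an internal vertex of $L'$.'' Neither of these is true: left multiplication by $g\neq e'$ has no fixed vertex at all, since $gv=v$ forces $g=e'$. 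What $g$ does is invert the edge $\{h,\,gh\}=\{h,\,hb_j\}$, swapping its two endpoints, and it pairs off every vertex of $L_1$ with a distinct vertex of $gL_1$. There is no separate ``leaf of $L_1$ adjacent to $h$ along the connecting edge'' -- the other endpoint of that edge is $gh$, which lies in $gL_1$, not in $L_1$. The actual source of the exponent shifts is that the single vertex $h$ changes status: in $L_1$ it has some degree $d$, while in $L'$ its degree is $d+1$ because of the connecting edge. When $h$ happens to be a leaf of $L_1$ (degree $1$), it becomes a non-leaf of $L'$, which is what produces the $\card\Leafs(L_1)-1$ and $\card\NLeafs(L_1)+1$ exponents; nothing else about $L_1$ changes. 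Your explanation, by inventing a fixed vertex and a phantom neighboring leaf, does not establish the displayed exponents.

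Two further points worth flagging. First, you notice ``here the $M$ rather than $N$ must be tracked carefully'' for the $\frac12 M z^2(\cgs_{(G,Y)}(z^2)-1)$ term but do not resolve it; this deserves resolution, because the trees with $L_1=\{e'\}$ are precisely the $\{e',b_j\}$ for $j\in\{1,\dots,N\}$ (a two-vertex subtree of $\Gra(L,X)$ stabilized by an order-two element must use a torsion generator), so there are $N$ of them, and checking against the $G\wr C_2$ example ($M=0$, $N=1$) confirms the coefficient should be $N$. Second, and more importantly, the exponent shift $\card\Leafs(L_1)-1$, $\card\NLeafs(L_1)+1$ is only valid when $h$ is a leaf of $L_1$, which the degree-count above shows need not hold once the tree $\Gra(L,X)$ has valence $\geq 3$: for instance in $L=\langle a_1,b_1\mid b_1^2\rangle$, take $L'=\{e',a_1,a_1^{-1},b_1,b_1a_1,b_1a_1^{-1}\}$, so that $L_1=\{e',a_1,a_1^{-1}\}$, $h=e'$ has degree $2$ in $L_1$, and $\card\Leafs(L')=4\neq 2(\card\Leafs(L_1)-1)=2$. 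A correct account must track whether $h\in\Leafs(L_1)$ (the paper's terse justification and display gloss over this as well, as they are only checked against rank-one examples where the Cayley tree is a line). Your proposal, by relying on the erroneous ``fixed point'' picture, does not detect this and so cannot supply the missing case analysis.
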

The first term, $1$, is the contribution of the trivial conjugacy class. The second term comes from the contribution of the asymmetric trees in $\Treesbylefac$, the term in the second line comes from the contribution of the symmetric trees in $\Treesbylefac$ fixed by the identity, and the terms in the third and the forth line come from the contribution of the symmetric trees in $\Treesbylefac$ fixed by the elements of order 2. According to (\ref{leaves_and_nonleaves}) we distinguish the $L_1$'s being $\{e\}$ or not.\\

Now the difficulty of computing $\cgs_{(G\wr L,\vec{Y})}^{e'}(z)$ resides in finding a suitable set $\Treesbylefac$.\\

If $L$ is free then there exists an explicit left order $<$ on $L$ \cite{Su_13}, and then for a given $L'\in\Trees$ there exists a unique $dL'\in \Trees$ (for $d\in L$) such that every element of $dL'$ is greater than or equal to $e'$ with respect to $<$.
\begin{lemma}[\cite{Su_13}:Theorem 1.2]
 If $L=\langle a_1,\ldots,a_M|\rangle$ is a free group then a left order $>$ on $L$ can be defined by
\[
 w>e\iff \left\{\begin{array}{c}
                 \card \{ a_j a_i^{-1}\textup{in }w,\, j > i \}>\card \{ a_j^{-1} a_i\textup{ in }w,\, j > i \} \\\textup{or}\\
                 \card \{ a_j a_i^{-1}\textup{ in }w,\, j > i \}=\card \{ a_j^{-1} a_i \textup{ in }w , j > i \}\\ 
                 \textup{ and }w\textup{ ends with }a_i,\,i\in\{1,\ldots,M\} \rlap{\,.}                 
                \end{array}\right. 
\]
\end{lemma}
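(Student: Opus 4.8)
The statement is Theorem~1.2 of \cite{Su_13}, and the plan is to verify the two defining properties of a positive cone. Recall that a subset $P$ of a group is the positive cone of a unique left order $<$, given by $u<v\Leftrightarrow u^{-1}v\in P$, precisely when $P\cdot P\subseteq P$ and the group equals the disjoint union $P\sqcup\{e\}\sqcup P^{-1}$; so I would set $P:=\{w\in L\setminus\{e\}\,:\,w>e\}$ for the relation displayed in the statement and check these two conditions. Throughout I write a non-trivial element of $L$ as a reduced word $w=x_1\cdots x_n$ over $X=\{a_1^{\pm1},\dots,a_M^{\pm1}\}$, and, with $p(w),q(w)$ as in the statement, set $\delta(w):=p(w)-q(w)=\sum_{k=1}^{n-1}\tau(x_k,x_{k+1})$, where $\tau(x,y)\in\{-1,0,1\}$ records whether the reduced pair $(x,y)$ is of the form counted by $p$, by $q$, or by neither. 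Then $w\in P$ exactly when $\delta(w)>0$, or $\delta(w)=0$ and $x_n$ is a positive generator.

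For the trichotomy the key point is the identity
\[
 \delta(w)+\delta(w^{-1})=
 \begin{cases}
 +1 & \text{if }x_1\text{ is positive and }x_n\text{ is negative},\\
 -1 & \text{if }x_1\text{ is negative and }x_n\text{ is positive},\\
 \phantom{+}0 & \text{if }x_1\text{ and }x_n\text{ have the same sign},
 \end{cases}
\]
which I would prove pairwise: the $k$-th consecutive pair of $w$ corresponds under inversion to the pair $(x_{k+1}^{-1},x_k^{-1})$ of $w^{-1}$, and checking the few possibilities (same sign; opposite signs with the larger index first; opposite signs with the larger index second, equal indices being excluded by reducedness) shows that a pair and its image together contribute $\mathrm{sign}(x_k)$ to $\delta(w)+\delta(w^{-1})$ when $x_k,x_{k+1}$ have opposite signs and $0$ otherwise; summing over $k$ and telescoping over the maximal constant-sign runs of $x_1,\dots,x_n$ gives the displayed value. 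Granting this, a short case split on the sign of $\delta(w)$ — invoking the tie-breaking clause exactly when $\delta(w)=0$, and using that $w^{-1}$ ends in $x_1^{-1}$ — shows that for $w\neq e$ exactly one of $w\in P$, $w^{-1}\in P$ holds, which is the trichotomy.

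For $P\cdot P\subseteq P$ (equivalently, transitivity of $<$) I would use additivity of $\delta$ under reduced concatenation: if the product $xy$ of two reduced words is itself reduced then $\delta(xy)=\delta(x)+\tau+\delta(y)$, $\tau\in\{-1,0,1\}$ being the contribution of the junction pair formed by the last letter of $x$ and the first letter of $y$. Given $u,v\in P$, I would write $u=ps$ and $v=s^{-1}q$ with $s$ the maximal piece cancelled in the product, so that $uv=pq$ is reduced; applying the concatenation formula to the three reduced products $u=p\cdot s$, $v=s^{-1}\cdot q$ and $uv=p\cdot q$, and eliminating $\delta(s)+\delta(s^{-1})$ via the identity above, expresses $\delta(uv)$ as $\delta(u)+\delta(v)$ plus a correction bounded in absolute value (by $4$) depending only on the signs and relative indices of the handful of boundary letters at the cancellation point, together with the degenerate variants when $p$, $q$ or $s$ is empty or when $u$ or $v$ has length one. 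A finite case analysis over these configurations then shows that the hypotheses that $\delta(u)>0$, or $\delta(u)=0$ with the last letter of $u$ positive, and likewise for $v$, force either $\delta(uv)>0$ or $\delta(uv)=0$ with the last letter of $uv$ (which is the last letter of $q$, hence of $v$) positive; that is, $uv\in P$.

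The one genuine obstacle is this last step: the crude bound $|\delta(uv)-\delta(u)-\delta(v)|\le 4$ is by itself far too weak, so one must actually track how cancellation interacts with the ``larger index first'' condition built into $\delta$ and organize the case distinctions so that the borderline situations (those with $\delta(uv)=0$) are exactly the ones covered by the tie-breaking clause. This bookkeeping is the content of \cite{Su_13}; since the order enters the present paper only as a tool for picking canonical representatives in $\Trees$, I would cite that verification rather than reproduce it here.
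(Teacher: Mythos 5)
The paper offers no proof of this lemma at all: it is imported verbatim from \cite{Su_13} (Theorem 1.2), and the surrounding text explicitly flags it as a borrowed tool whose only role is to pick canonical representatives in $\Trees$. So the ``paper's own proof'' is simply the citation, and your proposal, which likewise bottoms out in citing \cite{Su_13} for the hard part, is compatible with the paper's treatment rather than in conflict with it.

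That said, let me assess what you actually argue. Your framework (verify $P\cdot P\subseteq P$ and $L=P\sqcup\{e\}\sqcup P^{-1}$ for $P=\{w:w>e\}$) is the standard and correct way to certify a left order. Your trichotomy argument is sound: the pairwise computation of $\tau(x_k,x_{k+1})+\tau(x_{k+1}^{-1},x_k^{-1})$ does give $\operatorname{sign}(x_k)$ when the signs of $x_k,x_{k+1}$ differ and $0$ otherwise, and summing over sign changes yields $\delta(w)+\delta(w^{-1})\in\{-1,0,1\}$ as you state, from which the case split correctly shows exactly one of $w,w^{-1}$ lies in $P$. Where you stop short is the closure step: as you yourself say, the bound $|\delta(uv)-\delta(u)-\delta(v)|\le 4$ is much too weak, and ``a finite case analysis over these configurations then shows\dots'' is an assertion, not a proof. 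If this were meant as a self-contained proof that would be a genuine gap, and not a small one — the interaction of the tie-breaking clause with the cancellation boundary is exactly where all the subtlety of \v Suni\'c's order lives. But since you explicitly defer that case analysis to \cite{Su_13}, and since the paper does exactly the same, there is no mismatch with the source to report; your write-up simply supplies, as a bonus, a correct stand-alone proof of the trichotomy half.
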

So if $L$ is free one can define $\Treesbylefac$ as
\[
 \Treesbylefac:=\{L'\in\Trees\,:\,\forall l\in L'\setminus\{e'\},\,l>e\},
\]
and no element of $\Treesbylefac$ will be symmetric (from our last definition).
It is a classic result that free groups are left-orderable, dating all the way to \cite{V49}, and we explicitly mention the left order given by {\v{S}}uni{\'c} above because of its simplicity and with the hope that it can exploited to find formulas for the conjugacy growth series in this setting.

Although we were unable to find a general formula for the conjugacy series of groups of the form $G \wr L$, where $\Gra(L,X)$ is a tree, we observe that $\cgs_{(G\wr L,\vec{Y})}^{e'}(z)$ is a complex power series with natural coefficients in the variables $z\cgs_{(G,Y)}(z)$, $z(\cgs_{(G,Y)}(z)-1)$, $z^{2}\cgs_{(G,Y)}(z^2)$ and $z^{2}(\cgs_{(G,Y)}(z^2)-1)$, or just $z^{2}\cgs_{(G,Y)}(z)$ and $z^{2}(\cgs_{(G,Y)}(z)-1)$ when $L$ is a free group.

\section{Examples}\label{Sec_examples}

In this section we give explicit computations of the conjugacy growth series for several examples.

\begin{example}[Conjugacy growth series of $G\wr C_2$]

Let $C_2=\langle b|b^2\rangle$. Since in $C_2$ there are no elements of infinite order, $\cgs_{(G\wr C_2,\vec{Y})} ^A(z)=0$. The series $\cgs_{(G\wr L,\vec{Y})}^{B^{\neq e'}}(z)$ is 
\[
 \cgs_{(G\wr C_2,\vec{Y})}^{B^{\neq e'}}(z)=z\cgs_{(G,Y)}(z).
\]
Now the series $\cgs_{(G\wr L,\vec{Y})}^{e'}(z)$ is the sum of one term counting the contribution of the classes of elements $(\eta,e')$ when $\supp(\eta)$ is a singleton or the empty set, which is equal to $\cgs_{(G,Y)}(z)$, and the contribution of the conjugacy classes of elements $(\eta,e')$ with $\supp(\eta)=C_2$. Such a conjugacy class is unique up to permutation by $C_2$ of the components of $\eta$ and up to conjugation in $G$. So using Burnside's Lemma we find
\[
 \cgs_{(G\wr L,\vec{Y})}^{e'}(z)=\cgs_{(G,Y)}(z)+\frac{z^2}{2}\left({(\cgs_{(G,Y)}(z)-1)}^2+\cgs_{(G,Y)}(z^2)-1\right).
\]
Therefore
\[
 \cgs_{(G\wr C_2,\vec{Y})}(z)= \cgs_{(G,Y)}(z)+z\cgs_{(G,Y)}(z)+\frac{z^2}{2}\left({(\cgs_{(G,Y)}(z)-1)}^2+\cgs_{(G,Y)}(z^2)-1\right).
\]
Note that the radius of convergence of $\cgs_{(G\wr C_2,\vec{Y})}(z)$ is the same as the radius of convergence of $\cgs_{(G,Y)}(z)$.
\end{example}
\begin{example}[Conjugacy growth series of $G\wr\Z$]
Let $\Z=\langle a|\rangle$. Using formula (\ref{cgs_A}) in Section \ref{Sec_cgs_A} with $M=1$ and $N=0$ we find
\[
 \cgs_{G\wr\Z,\vec{Y}}^A(z)=2\sum_{r\geq1}\frac{\phi(r)}{r}\sum_{s\geq 1}\frac{z^{rs}{\sgs_{(G,Y)}(z^r)}^s}{s}.
\]
Since in $\Z$ there are no torsion elements, $\cgs_{(G\wr \Z,\vec{Y})}^{B^{\neq e'}}(z)=0$. Now for $\cgs_{(G\wr L,\vec{Y})}^{e'}(z)$ notice that if $\supp(\eta)\neq\emptyset$, then there exists a unique translate $\eta'$ of $\eta$ (i.e. $\eta'=\eta^d$ for a certain $d\in\Z$) such that $\supp(\eta')\subset\N$ and $\eta'(e')\neq e$. Hence we must count for every subset $\{0,\ldots,n\}\subset\N$, the contribution of the classes of elements $(\eta,0)$ with $\supp(\eta)\subset\{0,\ldots,n\}$, $\eta(0)\neq e$ and $\eta(n)\neq e$. We finally find
\[
 \cgs_{(G\wr L,\vec{Y})}^{e'}(z)=\cgs_{(G,Y)}(z)+z^2{(\cgs_{(G,Y)}(z)-1)}^2\sum_{k=0}^\infty {(z^2\cgs_{(G,Y)}(z))}^k=\cgs_{(G,Y)}(z)+\frac{z^2{(\cgs_{(G,Y)}(z)-1)}^2}{1-z^2\cgs_{(G,Y)}(z)}.
\]
Therefore
\[
 \cgs_{G\wr\Z,\vec{Y}}(z)=2\sum_{r\geq1}\frac{\phi(r)}{r}\sum_{s\geq 1}\frac{z^{rs}{\sgs_{(G,Y)}(z^r)}^s}{s}+\cgs_{(G,Y)}(z)+\frac{z^2{(\cgs_{(G,Y)}(z)-1)}^2}{1-z^2\cgs_{(G,Y)}(z)}.
\]
The radius of convergence of $\cgs_{G\wr\Z,\vec{Y}}(z)$ is the unique positive value $t$ such that $t\sgs_{(G,Y)}(t)=1$.
\end{example}
\begin{example}[Conjugacy growth series of the Lamplighter group $C_2\wr \Z$]
Let $G=C_2$ be the group with 2 elements. Using the formula above we find
\[
 \cgs_{(C_2\wr \Z,\vec{Y})}(z)=2\sum_{r\geq1}\frac{\phi(r)}{r}\sum_{s\geq 1}\frac{z^{rs}{(1+z^r)}^s}{s}+1+z+\frac{z^4}{1-z^2(1+z)}.
\]
We give below an asymptotic estimate of the conjugacy growth $[z^m]\cgs_{(C_2\wr \Z,\vec{Y})}(z)$. We say that two functions $f,g:\,\N\rightarrow\N$ are asymptotically equivalent, and write $f\sim_{\textup{as}} g$, if 
\[
 \lim_{m\to\infty}\frac{f(m)}{g(m)}=1.
\]
Since 
\[
 \RC\left(2\sum_{r\geq1}\frac{\phi(r)}{r}\sum_{s\geq 1}\frac{z^{rs}{(1+z^r)}^s}{s}\right)<\RC\left(1+z+\frac{z^4}{1-z^2(1+z)}\right)
\]
it follows that 
\[
[z^m]\cgs_{(C_2\wr \Z,\vec{Y})}(z)\sim_{\textup{as}} [z^m]2\sum_{r\geq1}\frac{\phi(r)}{r}\sum_{s\geq 1}\frac{z^{rs}{(1+z^r)}^s}{s}.
\]
We have the following.
\begin{align*}
 [z^m]2\sum_{r\geq1}\frac{\phi(r)}{r}\sum_{s\geq 1}\frac{z^{rs}{(1+z^r)}^s}{s}&=2\sum_{\stackrel{(r,s,k)\in\N^{*}\times\N^{*}\times\N}{r(s+k)=m,\,k\leq s}}\frac{\phi(r)}{rs}\binom{s}{k}\\
 &=2\sum_{r|m}\frac{\phi(r)}{r}\sum_{s=\frac{m}{2r}}^{\frac{m}{r}}\frac{1}{s}\binom{s}{\frac{m}{r}-s}\\
 &\stackrel{\text{\footnotemark{}}}{=}2\sum_{r|m}\frac{\phi(r)}{m}\left({\left(\frac{-1+\sqrt{5}}{2}\right)}^{\frac{m}{r}}+{\left(\frac{1+\sqrt{5}}{2}\right)}^{\frac{m}{r}}\right)\\
 &=2\frac{1}{m}\left({\left(\frac{-1+\sqrt{5}}{2}\right)}^{m}+{\left(\frac{1+\sqrt{5}}{2}\right)}^{m}\right)\\
 &+2\sum_{\stackrel{r|m}{r<m}}\frac{\phi(r)}{m}\left({\left(\frac{-1+\sqrt{5}}{2}\right)}^{\frac{m}{r}}+{\left(\frac{1+\sqrt{5}}{2}\right)}^{\frac{m}{r}}\right)\\
 &\sim_{\textup{as}}\frac{2}{m}\left({\left(\frac{-1+\sqrt{5}}{2}\right)}^{m}+{\left(\frac{1+\sqrt{5}}{2}\right)}^{m}\right)\\
 &\sim_{\textup{as}}\frac{2}{m}{\left(\frac{1+\sqrt{5}}{2}\right)}^{m}
\end{align*}
Therefore \footnotetext{We use the software Wolframalpha.}
\begin{equation}
 [z^m]\cgs_{(C_2\wr \Z,\vec{Y})}(z)\sim_{\textup{as}}\frac{2}{m}{\left(\frac{1+\sqrt{5}}{2}\right)}^{m}\label{estimateL2}.
\end{equation}

\end{example}
\begin{proposition}\label{tran_L2}
 The conjugacy growth series $\cgs_{(C_2\wr \Z,\vec{Y})}(z)$ of the Lamplighter group is transcendental over $\Q(z)$.
\end{proposition}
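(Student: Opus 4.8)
The plan is to argue by contradiction, exploiting the asymptotic estimate (\ref{estimateL2}) together with the restricted shape of the coefficient asymptotics of algebraic power series. Set $a_m:=[z^m]\cgs_{(C_2\wr \Z,\vec{Y})}(z)$ and $\varphi:=\frac{1+\sqrt{5}}{2}$, so that (\ref{estimateL2}) says $a_m\sim_{\textup{as}}\frac{2}{m}\varphi^{m}$; in particular $a_m>0$ for all large $m$ and $\RC\!\left(\cgs_{(C_2\wr \Z,\vec{Y})}(z)\right)=1/\varphi$. Assume, for contradiction, that $\cgs_{(C_2\wr \Z,\vec{Y})}(z)$ is algebraic over $\Q(z)$, hence over $\Co(z)$.

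Next I would invoke the relevant transcendence criterion. If $f(z)=\sum_n f_n z^n$ is rational, then for all large $n$ the coefficient $f_n$ is a finite sum of terms $P_i(n)\gamma_i^{n}$ with $P_i$ polynomials and $\gamma_i\in\Co$, whence $|f_n|=\Theta\!\big(n^{k}|\gamma|^{n}\big)$ for some integer $k\ge 0$ and some $\gamma$. If $f(z)$ is algebraic over $\Co(z)$ but not rational, then by the singularity analysis of algebraic functions (a classical consequence of Puiseux expansions and the transfer theorem; see, e.g., Flajolet--Sedgewick, \emph{Analytic Combinatorics}) the coefficients admit an asymptotic expansion whose leading term has the form
\[
 f_n = \rho^{-n}\,n^{\alpha}\Big(\textstyle\sum_{j=1}^{p} c_j\,\omega_j^{n}+o(1)\Big),
\]
with $\rho>0$, the $\omega_j$ roots of unity, $c_j\in\Co$ not all zero, and $\alpha$ a rational number that is \emph{not} a negative integer. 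The exclusion of $\alpha\in\{-1,-2,-3,\dots\}$ is forced by the transfer relation $[z^n](1-z/\rho)^{-\beta}\sim\rho^{-n}n^{\beta-1}/\Gamma(\beta)$: this degenerates precisely when $\beta\in\{0,-1,-2,\dots\}$, i.e. exactly when the singular term $(1-z/\rho)^{-\beta}$ is in fact analytic and so contributes nothing to the asymptotics.

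The contradiction is then immediate. From $a_m\sim_{\textup{as}}\frac{2}{m}\varphi^{m}$ one has $|a_m|=\Theta\!\big(m^{-1}\varphi^{m}\big)$, which already excludes the rational case (there the exponent of $m$ is a non-negative integer). In the non-rational algebraic case, matching magnitudes and using that $a_m/(m^{-1}\varphi^{m})\to 2$ forces $\rho=1/\varphi$, $\alpha=-1$ and $\sum_{j}c_j\omega_j^{m}\to 2$; but $\alpha=-1$ is a negative integer, contradicting the criterion. Hence $\cgs_{(C_2\wr \Z,\vec{Y})}(z)$ is not algebraic over $\Q(z)$, i.e. it is transcendental over $\Q(z)$.

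I expect the only genuine obstacle to be quoting (and, if a self-contained treatment is desired, reproving) the exact statement about coefficients of algebraic functions — in particular the exclusion of negative-integer exponents and the bookkeeping of several dominant singularities on the circle of convergence. A more elementary alternative avoids this: summing $\sum_{s\ge1}x^{s}/s=-\log(1-x)$ in the formula for $\cgs^A_{(C_2\wr \Z,\vec{Y})}(z)$ gives the closed form
\[
 \cgs^A_{(C_2\wr \Z,\vec{Y})}(z)=-2\sum_{r\ge1}\frac{\phi(r)}{r}\log\!\big(1-z^{r}-z^{2r}\big),
\]
and, since for $r\ge2$ the $r$-th summand is analytic on $\D(0,\varphi^{-1/2})\supsetneq\D(0,1/\varphi)$ and the rational remainder is analytic at $1/\varphi$ as well, near $z=1/\varphi$ the function $\cgs_{(C_2\wr \Z,\vec{Y})}(z)$ differs from $-2\log(1-z-z^{2})=-2\log(1-\varphi z)-2\log(1+\varphi^{-1}z)$ by a function analytic at $1/\varphi$. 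An algebraic function has a Puiseux expansion at $1/\varphi$, which (being a finite superposition of powers, bounded once it grows slower than any negative power) cannot produce or cancel the genuine logarithmic singularity of $-2\log(1-\varphi z)$ there; this again yields transcendence. Either way the principle is the same: coefficients of size $\gamma^{m}/m$ are the hallmark of a logarithm, not of an algebraic function.
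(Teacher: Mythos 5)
Your main argument is exactly the paper's: the paper's proof simply cites the asymptotic (\ref{estimateL2}) and \cite[Theorem D]{F87}, which is precisely the criterion you restate (coefficients of an algebraic series cannot behave like $c\,\rho^{-n}n^{-k}$ with $k$ a positive integer). One small imprecision: for a rational $f$ the claim $|f_n|=\Theta(n^k|\gamma|^n)$ is not literally true when several poles of equal modulus interfere (e.g.\ $1/(1-z^2)$), but since (\ref{estimateL2}) gives a genuine single-term asymptotic equivalence with exponent $-1$, the conclusion goes through unchanged.

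Your alternative is a genuinely different and more self-contained route. Summing the inner series gives $\cgs^A_{(C_2\wr\Z,\vec{Y})}(z)=-2\sum_{r\ge1}\frac{\phi(r)}{r}\log(1-z^r-z^{2r})$; the $r\ge 2$ tail and the rational remainder $1+z+\frac{z^4}{1-z^2-z^3}$ are analytic on a disc strictly larger than $\D(0,1/\varphi)$ (for the tail one needs uniform convergence near $1/\varphi$, which follows from $|\log(1-z^r-z^{2r})|=O\big((|z|+\epsilon)^r\big)$ there), so near $z=1/\varphi$ the series differs from $-2\log(1-\varphi z)$ by something analytic. An algebraic function has a Puiseux expansion at every finite point and so cannot carry a logarithmic singularity, giving transcendence directly. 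This avoids both the asymptotic computation (\ref{estimateL2}) and the black-box citation of Flajolet's Theorem D, at the cost of one convergence check; the paper's route is shorter but relies on an external coefficient-asymptotics criterion. Both are correct.
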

\begin{proof}
This follows from the estimate (\ref{estimateL2}) and \cite[Theorem D]{F87}.  
\end{proof}

\begin{example}[Conjugacy growth series of $G\wr(C_2*C_2)$]
Let $C_2*C_2=\langle b_1,b_2|b_1^2,b_2^2\rangle$. Using formula (\ref{cgs_A}) in Section \ref{Sec_cgs_A} with $M=0$ and $N=2$ we find
\[
 \cgs_{(G\wr (C_2*C_2),\vec{Y})}^A(z)=\sum_{r\geq 1}\frac{\phi(r)}{r}\sum_{s\geq 1}\frac{z^{2rs}{\sgs_{(G,Y)}(z^r)}^{2s}}{s}.
\]
Using formula (\ref{cgs_Bneqe}) in Section \ref{Sec_cgs_Bneqe} with $M=0$ and $N=2$, we find
\[
  \cgs_{(G\wr (C_2*C_2),\vec{Y})}^{B^{\neq e'}}(z)=2z\cgs_{(G,Y)}(z)\frac{1-z^2}{1-z^2\cgs_{(G,Y)}(z)}.
\]
For the computation of $\cgs_{G\wr (C_2*C_2)}^{e'}(z)$ we proceed as follows. Let us consider the set $\Trees$ as in Section \ref{Sec_cgs_e}. Every such element spans a line. Let $L'\in \Trees$ and let $r$ be the diameter of $L'$. There are two ways, according to the parity of $r$, to choose a representative $L'$ under the relation given by the left action of $L$.
\begin{enumerate}[$\bullet$]
 \item If $r$ is even there exists a unique $\tilde{L}\subset\{\textup{words that do not begin with }b_2\}$ that is a left translate of $L'$ in $\Trees$. The subgroup of $L$ fixing $\tilde{L}$ by left multiplication is trivial.
 Hence the contribution of all such elements is
 \[
  \cgs_{(G,Y)}(z)-1+{\left(\cgs_{(G,Y)}(z)-1\right)}^2\sum_{0<r\textup{ even}}z^{2r}{\cgs_{(G,Y)}(z)}^{r-1}=\cgs_{(G,Y)}(z)+{\left(\cgs_{(G,Y)}(z)-1\right)}^2\frac{z^4\cgs_{(G,Y)}(z)}{1-z^4{\cgs_{(G,Y)}(z)}^2}\rlap{\,.}
 \]
 \item If $r$ is odd then exactly one of the following happens:
 \begin{enumerate}[i)]
  \item There exists a unique $\tilde{L}\subset\{\textup{words that do not begin with }b_2\}$ that is a left translate of $L'$ in $\Trees$, or
  \item there exists a unique $\tilde{L}\subset\{b_2\}\cup\{\textup{words that do not begin with }b_2\}$ that is a left translate of $L'$ in $\Trees$.
 \end{enumerate}
In both cases the subgroup of $L$ fixing $\tilde{L}$ by left multiplication is $C_2$. Hence each of the cases i) and ii) gives rise to
\begin{align*}
 &\frac{1}{2}\left({(\cgs_{(G,Y)}(z)-1)}^2 \sum_{0<r\textup{ odd}}z^{2r}{\cgs_{(G,Y)}(z)}^{r-1}+(\cgs_{(G,Y)}(z^2)-1)\sum_{0<r\textup{ odd}}z^{2r}{\cgs_{(G,Y)}(z^2)}^{\frac{r-1}{2}}\right)\\
 &=\frac{z^2}{2}\left(\frac{{(\cgs_{(G,Y)}(z)-1)}^2}{1-{z^4\cgs_{(G,Y)}(z)}^2}+\frac{\cgs_{(G,Y)}(z^2)-1}{1-z^4\cgs_{(G,Y)}(z^2)}\right).
\end{align*}

\end{enumerate}
Therefore summing the contribution of the even and odd $r$'s and the contribution of the trivial conjugacy class we find
\[
 \cgs_{G\wr(C_2*C_2),\vec{Y}} ^{e'}(z)=\cgs_{(G,Y)}(z)+z^2\left(1+z^2{\cgs_{(G,Y)}(z)}\right)\frac{({\cgs_{(G,Y)}(z)-1)}^2}{1-z^4{\cgs_{(G,Y)}(z)}^2}+ z^2\frac{(\cgs_{(G,Y)}(z^2)-1)}{1-z^4\cgs_{(G,Y)}(z^2)}.
\]
Finally
\begin{align*}
\cgs_{G\wr(C_2*C_2),\vec{Y}}(z)= &\,2\sum_{r\geq 1}\frac{\phi(r)}{r}\sum_{s\geq 1}z^{2rs}{\sgs_{(G,Y)}(z^r)}^{2s}+2z\cgs_{(G,Y)}(z)\frac{1-z^2}{1-z^2\cgs_{(G,Y)}(z)}\\
&+ \cgs_{(G,Y)}(z)+z^2\left(1+z^2{\cgs_{(G,Y)}(z)}\right)\frac{({\cgs_{(G,Y)}(z)-1)}^2}{1-z^4{\cgs_{(G,Y)}(z)}^2}+ z^2\frac{(\cgs_{(G,Y)}(z^2)-1)}{1-z^4\cgs_{(G,Y)}(z^2)}.
\end{align*}
\end{example}

\section*{Acknowledgments}
The author was supported by the Swiss National Science 
Foundation grant Professorship FN PP00P2-144681/1. He would like to thank his PhD advisor Laura Ciobanu for her valuable help on this paper, Rémi Coulon for helpful discussions, and Ana Khukhro for her proofreading.
\bigskip

\bigskip

\textsc{V. Mercier,
Mathematics Department,
University of Neuch\^atel,
Rue Emile-Argand 11,
CH-2000 Neuch\^atel, Switzerland
}

\emph{E-mail address}{:\;\;}\texttt{valentin.mercier@unine.ch}

\end{document}